\def\ppen{\penalty 300 }
\let\col=\colon
\def\colon{\col\ppen}
\theoremstyle{plain} 
\newtheorem{thm}{Theorem}[section]
\newtheorem*{thm*}{Theorem}
\newtheorem{prop}[thm]{Proposition}
\newtheorem{lem}[thm]{Lemma}
\newtheorem{cor}[thm]{Corollary}
\newtheorem{obs}[thm]{Observation}
\theoremstyle{definition}
\newtheorem{defn}[thm]{Definition}
\newtheorem{rem}[thm]{Remark}
\newtheorem{ex}[thm]{Example}
\newtheorem{quest}[thm]{Question}
\newtheorem{alg}[thm]{Algorithm}
\numberwithin{equation}{section}
\renewcommand{\theta}{\vartheta}
\renewcommand{\phi}{\varphi}
\renewcommand{\epsilon}{\varepsilon}
\renewcommand{\subset}{\subseteq}
\renewcommand{\supset}{\supseteq}
\newcommand{\N}{\mathbb N}
\newcommand{\Z}{\mathbb Z}
\newcommand{\C}{\mathbb C}
\newcommand{\staralg}{\mathop{\rm\ast\mathchar `\-alg}}
\DeclareMathOperator{\Mor}{Mor}
\DeclareMathOperator{\id}{id}
\DeclareMathOperator{\sgn}{sgn}
\DeclareMathOperator{\Aut}{Aut}
\DeclareMathOperator{\Irr}{Irr}
\DeclareMathOperator{\spanlin}{span}
\DeclareMathOperator{\Rep}{Rep}
\newcommand{\Cat}{\mathscr{C}}
\newcommand{\RCat}{\mathfrak{C}}
\newcommand{\Lin}{\mathscr{L}}
\newcommand{\Part}{\mathscr{P}}
\newcommand{\Partlin}{\mathsf{Part}}
\newcommand{\Pair}{\mathsf{Pair}}
\newcommand{\A}{\mathcal{A}}
\newcommand{\Cay}{\mathrm{Cay}}
\newcommand{\F}{\mathcal{F}}
\newcommand{\adet}{\mathop{\widebreve{\rm det}}\nolimits}
\renewcommand{\det}{\mathop{\rm det}\nolimits}
\newcommand{\sqprod}{\mathrel{\square}}
\newcommand{\Olg}{\mathscr{O}}
\newcommand{\brewedge}{\mathop{\breve\wedge}}
\newcommand{\forkpart}{\Partition{
\Pblock 0to0.5:1,2
\Psingletons 1to0.5:1.5
}}
\newcommand{\mergepart}{\Partition{
\Pblock 1to0.5:1,2
\Psingletons 0to0.5:1.5
}}
\newcommand{\mergeone}{\Partition{
\Pblock 1 to 0.7:1,2
\Pline  (1.5,0.4) (1.5,0.7)
\Ptext (1,1.3) {$\scriptscriptstyle 1$}
\Ptext (2,1.3) {$\scriptscriptstyle 1$}
\Ptext (1.5,0.1) {$\scriptscriptstyle 1$}
}}
\newcommand{\connecterone}{\Partition{
\Pblock 0.4 to 0.6:1,2
\Pblock 1 to 0.8:1,2
\Pline  (1.5,0.6) (1.5,0.8)
\Ptext (1,1.3) {$\scriptscriptstyle 1$}
\Ptext (2,1.3) {$\scriptscriptstyle 1$}
\Ptext (1,0.1) {$\scriptscriptstyle 1$}
\Ptext (2,0.1) {$\scriptscriptstyle 1$}
}}
\newcommand{\fournum}{\Partition{
\Pblock 0.4to0.7:1,2,3,4
\Ptext (1,0.1) {$\scriptscriptstyle 2$}
\Ptext (2,0.1) {$\scriptscriptstyle 2$}
\Ptext (3,0.1) {$\scriptscriptstyle 2$}
\Ptext (4,0.1) {$\scriptscriptstyle 2$}
}}
\newcommand{\blockpart}{\Partition{
\Pblock 1to0.5:1,2,4,5
\Ptext (3,0.7) {$\,\cdots$}
}}
\newcommand{\immersepart}{\Partition{
\Pline (2,1) (1,0)
\Pline (3,1) (4,0)
\Pblock 0to0.3:2,3
}}
\def\Lnrule{\vrule height12pt depth 3pt}
\def\Lnmatrix#1{\left(
\vcenter{\offinterlineskip\halign{
\hfil\enskip$##$\enskip\hfil\Lnrule&
\hfil\enskip$##$\enskip\hfil&\hfil\enskip$##$\enskip\hfil&\hfil\enskip$##$\enskip\hfil\Lnrule&
\hfil\enskip$##$\enskip\hfil&\hfil\enskip$##$\enskip\hfil&\hfil\enskip$##$\enskip\hfil\Lnrule&
\hfil\enskip$##$\enskip\hfil&\rlap{\kern1em$\scriptstyle ##$}\cr
#1
\crcr
}}
\right)}
\def\Pa (#1,#2){
	\pgfpathellipse{\pgfpointxy{#1}{#2}}{\pgfpoint{0.2em}{0em}}{\pgfpoint{0em}{0.1em}}
	\pgfsetfillcolor{white}
	\pgfusepath{stroke,fill}
}
\newcommand{\PAid}{\Partition{
\Pline (0.8,1) (0.8,0)
\Pline (1.2,1) (1.2,0)
}[a/a]}
\newcommand{\PApair}{\Partition{
\Pblock 0to0.3:1.2,1.8
\Pblock 0to0.6:0.8,2.2
}[aa]}
\newcommand{\PAthree}{\Partition{
\Pblock 0to0.3:1.2,1.8
\Pblock 0to0.3:2.2,2.8
\Pblock 0to0.6:0.8,3.2
}[aaa]}
\newcommand{\PAfour}{\Partition{
\Pblock 0to0.3:1.2,1.8
\Pblock 0to0.3:2.2,2.8
\Pblock 0to0.3:3.2,3.8
\Pblock 0to0.6:0.8,4.2
}[aaaa]}
\newcommand{\PAfive}{\Partition{
\Pblock 0to0.3:1.2,1.8
\Pblock 0to0.3:2.2,2.8
\Pblock 0to0.3:3.2,3.8
\Pblock 0to0.3:4.2,4.8
\Pblock 0to0.6:0.8,5.2
}[aaaaa]}
\newcommand{\PAimmerse}{\Partition{
\Pline (1.3,1) (0.8,0)
\Pline (1.7,1) (2.2,0)
\Pblock 0to0.3:1.2,1.8
\Ppoint1 \Pa:1.5
\Ppoint0 \Pa:1,2
}}
\newcommand{\PAconnecter}{\Partition{
\Pline (0.8,1) (0.8,0)
\Pline (2.2,1) (2.2,0)
\Pblock 0to0.3:1.2,1.8
\Pblock 1to0.7:1.2,1.8
}[aa/aa]}
\newcommand{\PAdoubleimmerse}{\Partition{
\Pline (1.3,1) (0.8,0)
\Pline (2.7,1) (3.2,0)
\Pblock 0to0.3:1.2,1.8
\Pblock 0to0.3:2.2,2.8
\Pblock 1to0.7:1.7,2.3
\Ppoint1 \Pa:1.5,2.5
\Ppoint0 \Pa:1,2,3
}}
\newcommand{\PAcross}{\Partition{
\Pline (0.8,1) (1.8,0)
\Pline (1.2,1) (2.2,0)
\Pline (1.8,1) (0.8,0)
\Pline (2.2,1) (1.2,0)
}[aa/aa]}
\newcommand{\PAinnercross}{\Partition{
\Pline (0.8,1) (0.8,0)
\Pline (1.2,1) (1.8,0)
\Pline (1.8,1) (1.2,0)
\Pline (2.2,1) (2.2,0)
}[aa/aa]}
\newcommand{\PAoutercross}{\Partition{
\Pblock 0to0.2:1.2,1.8
\Pblock 1to0.8:1.2,1.8
\Pline (0.8,1) (2.2,0)
\Pline (2.2,1) (0.8,0)
}[aa/aa]}
\newcommand{\PApairproj}{\Partition{
\Pblock 0to0.2:1.2,1.8
\Pblock 0to0.4:0.8,2.2
\Pblock 1to0.8:1.2,1.8
\Pblock 1to0.6:0.8,2.2
}[aa/aa]}
\newcommand{\PAAbb}{\Partition{
\pgfsetdash{{1pt}{1pt}}{0pt}
\Pblock 1to0.7:1,2
\pgfsetdash{{0.5pt}{0.8pt}}{0pt}
\Pblock 0to0.3:1,2
}}
\newcommand{\PAABB}{\Partition{
\pgfsetdash{{1pt}{1pt}}{0pt}
\Pblock 1to0.7:1,2
\Pblock 0to0.3:1,2
}}
\newcommand{\PaBaB}{\Partition{
\pgfsetdash{{1pt}{1pt}}{0pt}
\Pline (1,1) (2,0)
\pgfsetdash{{0.5pt}{0.8pt}}{0pt}
\Pline (2,1) (1,0)
}}
\newcommand{\PaBBa}{\Partition{
\pgfsetdash{{1.5pt}{1pt}}{0pt}
\Pline (1,1) (1,0)
\pgfsetdash{{0.4pt}{0.8pt}}{0pt}
\Pline (2,1) (2,0)
}}
\def\widebreve{\mathpalette\wide@breve}
\def\wide@breve#1#2{\sbox\z@{$#1#2$}%
     \mathop{\vbox{\m@th\ialign{##\crcr
\kern0.08em\brevefill#1{0.8\wd\z@}\crcr\noalign{\nointerlineskip}%
                    $\hss#1#2\hss$\crcr}}}\limits}
\def\brevefill#1#2{$\m@th\sbox\tw@{$#1($}%
  \hss\resizebox{#2}{\wd\tw@}{\rotatebox[origin=c]{90}{\upshape(}}\hss$}
\begin{document}
\title{Quantum symmetries of Cayley graphs of abelian groups}
\author{Daniel Gromada}
\address{Czech Technical University in Prague, Faculty of Electrical Engineering, Department of Mathematics, Technická 2, 166 27 Praha 6, Czechia}
\email{gromadan@fel.cvut.cz}
\thanks{I would like to thank to Simon Schmidt for discussions about the quantum symmetries of folded and halved hypercube graphs. I also thank to Christian Voigt for spotting a mistake in an earlier version of this manuscript.}
\thanks{This work was supported by the project OPVVV CAAS CZ.02.1.01/0.0/0.0/16\_019/0000778}
\date{\today}
\subjclass[2020]{20G42 (Primary); 05C25, 18M25 (Secondary)}
\keywords{Cayley graph, hypercube graph, Hamming graph, quantum symmetry}

\begin{abstract}
We study Cayley graphs of abelian groups from the perspective of quantum symmetries. We develop a general strategy for determining the quantum automorphism groups of such graphs. Applying this procedure, we find the quantum symmetries of the halved cube graph, the folded cube graph and the Hamming graphs.
\end{abstract}

\maketitle
\section*{Introduction}
The adjective \emph{quantum} in the title of this article refers to non-commutative geometry. In 1987, Woronowicz introduced the notion of \emph{compact quantum groups} \cite{Wor87} (following earlier work of Drinfeld and Jimbo) as the generalization of compact groups in the non-commutative geometry. This allowed to study symmetries of different objects not only in terms of the classical theory of symmetry groups, but also in terms of quantum groups. A remarkable property of graphs is that although they are classical objects, they often possess not only classical symmetries, but also the quantum ones. This was first noticed by Wang \cite{Wan98}, who defined \emph{free quantum symmetric group} $S_N^+$ as the quantum group of symmetries of a finite space of $N$ points. This quantum group is much larger than the classical group $S_N$ if $N\ge 4$.

Studying quantum symmetries of graphs started with the work of Bichon \cite{Bic03} and later Banica \cite{Ban05}, who gave the definition of the \emph{quantum automorphism group} of a graph. Since then, many authors worked on determining the quantum automorphism groups of different graphs. Worth mentioning is the joint publication of the mentioned two authors \cite{BB07} determining quantum symmetries of vertex transitive graphs up to 11 vertices and a recent extensive work of Schmidt, which is summarized in his PhD thesis \cite{SchThesis}.

An important tool for studying compact quantum groups was introduced again by Woronowicz in \cite{Wor88} -- the monoidal $*$-category of representations and intertwiners. Woronowicz formulated a generalization of the so-called Tannaka--Krein duality: He proved that a compact quantum group is uniquely determined by its representation category. A very useful result then came with the work of Banica and Speicher \cite{BS09}, who showed how to model those intertwiners using combinatorial objects -- partitions.

This formalism can also be used when working with quantum automorphisms of graphs. Given a graph $X$, its quantum automorphism group can be defined as the unique compact matrix quantum group $G$, whose representation category is generated by the intertwiners $T_{\pairpart}^{(N)}$, $T_{\mergepart}^{(N)}$ and $A_X$, where $A_X$ is the adjacency matrix of $X$.

In this paper, we study Cayley graphs of abelian groups. We use the intertwiner formalism to formulate a general algorithm for determining the quantum automorphism groups of such graphs. The result is presented in Section~\ref{sec.Cayley} as Algorithm~\ref{Algorithm}. It is based on the idea, which was already used in \cite{BBC07} to determine the quantum symmetries of the hypercube graph, namely that the Fourier transform on the underlying group $\Gamma$ diagonalizes the adjacency matrix of the Cayley graph of this group. The case of the hypercube graph is presented in Section~\ref{sec.hypercube} as a motivating example.

As a side remark, let us mention the work of Chassaniol \cite{Cha19}, who uses the intertwiner approach to determine quantum symmetries of some circulant graphs, i.e.~ Cayley graphs of the cyclic groups. But apart from using the intertwiners, his techniques are different from ours.

Subsequently, we use our algorithm to determine the quantum automorphism groups of certain Cayley graphs, which were not known before. This constitutes the main result of this article, which can be summarized as follows:
\begin{thm*}
We determine the quantum automorphism groups of the following graphs.
\begin{enumerate}
\renewcommand{\theenumi}{\alph{enumi}}
\item {}[Theorem~\ref{T.demicube}] For $n\neq 1,3$, the quantum symmetries of the halved hypercube graph $\frac{1}{2}Q_{n+1}$ are described by the anticommutative special orthogonal group $SO_{n+1}^{-1}$.
\item {}[Theorem~\ref{T.fcube}] For $n\neq 1,3$, the quantum symmetries of the folded hypercube graph $FQ_{n+1}$ are described by the projective anticommutative orthogonal group $PO_{n+1}^{-1}$.
\item {}[Theorem~\ref{T.Hamming}] For $m\neq 1,2$, the quantum symmetries of the Hamming graph $H(n,m)$ are described by the wreath product $S_m^+\wr S_n$.
\end{enumerate}
\end{thm*}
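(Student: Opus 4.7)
The plan is to apply the general Algorithm~\ref{Algorithm} developed in Section~\ref{sec.Cayley} to each of the three families, following the template established for the ordinary hypercube in Section~\ref{sec.hypercube}. All three graphs are Cayley graphs of finite abelian groups: $\frac{1}{2}Q_{n+1}$ arises from the even-weight subgroup of $\Z_2^{n+1}$ (isomorphic to $\Z_2^n$) with generators being the weight-two vectors; $FQ_{n+1}$ arises from the quotient $\Z_2^{n+1}/\langle(1,\dots,1)\rangle$ with the images of the standard basis vectors as generators; and $H(n,m)$ arises from $\Z_m^n$ with generators of Hamming weight one. In each case the first step is to diagonalize the adjacency matrix via the Fourier transform on the underlying group $\Gamma$. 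The resulting eigenvalue function $\lambda\colon \widehat\Gamma \to \R$ feeds into Algorithm~\ref{Algorithm}, which returns a generating set of intertwiners for the quantum automorphism group.

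For part~(a), the eigenvalues of $\frac{1}{2}Q_{n+1}$ are expressible in terms of the sizes of subsets of $\{1,\dots,n+1\}$ indexing the characters, and the ``evenness'' of the vertex set (compared to the full hypercube) manifests in the dual picture as a parity constraint on diagrams. Algorithm~\ref{Algorithm} should then refine the known intertwiner description of $O_{n+1}^{-1}$, the quantum symmetry group of the full hypercube $Q_{n+1}$, by exactly this parity constraint; matching the result against the presentation of $SO_{n+1}^{-1}$, which differs from $O_{n+1}^{-1}$ by the quantum-determinant-equals-one relation, yields the identification. For part~(b), replacing $Q_{n+1}$ by $FQ_{n+1}$ identifies antipodal vertices, which in the dual picture amounts to restricting to characters constant on $\langle\mathbf 1\rangle$; this corresponds in the quantum group picture to the projective quotient, so Algorithm~\ref{Algorithm} should produce exactly the intertwiners of $PO_{n+1}^{-1}$.

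For part~(c), both the group $\Z_m^n$ and its generating set factor as $n$-fold products of the data for $H(1,m) = K_m$, and the Fourier transform respects this factorization. Consequently the eigenspace structure of $H(n,m)$ is an $n$-fold tensor product of that of $K_m$, naturally permuted by $S_n$. Algorithm~\ref{Algorithm} should then return the intertwiners generated by those of $S_m^+$ (the quantum automorphism group of $K_m$, valid precisely for $m\neq 1,2$) acting on each tensor factor, together with the flip maps permuting factors --- this is precisely the description of the wreath product $S_m^+ \wr S_n$ as a quantum subgroup of $S_{mn}^+$. The hypothesis $m\neq 2$ is essential because $H(n,2) = Q_n$ is the hypercube, whose quantum symmetry $O_n^{-1}$ is much larger than $S_2\wr S_n$.

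The main obstacle in all three parts is the upper bound: showing that Algorithm~\ref{Algorithm} produces no intertwiners beyond those of the claimed quantum group. The lower bound --- exhibiting the listed quantum group as acting --- is essentially automatic once the Cayley/Fourier setup is in place. The delicate combinatorial work lies in analyzing exactly which partitions of the character group $\widehat\Gamma$ are forced by the eigenvalue function $\lambda$; the exclusions $n\neq 1,3$ in parts~(a) and~(b) correspond precisely to low-dimensional cases where accidental eigenvalue coincidences admit extra symmetries that would enlarge the answer.
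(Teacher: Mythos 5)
Your proposal correctly reproduces the paper's overall framework --- it is essentially a restatement of Algorithm~\ref{Algorithm} --- but as a proof it has genuine gaps: the steps you label as routine are exactly where the mathematical content lies, and in two places your heuristics point in the wrong direction. For (a), the crux is not a ``parity constraint on diagrams'' but the eigenvalue coincidence $\lambda_d=\lambda_{n+1-d}$, which forces the smallest faithful invariant block to be $\tilde V_1=V_1\oplus V_n$, of dimension $n+1$ rather than $n$; one must then recheck that the Fourier transform of the four-point block intertwiner still produces the $O_{n+1}^{-1}$ relations on this \emph{enlarged} space (this is precisely where $n\neq 1,3$ enters --- for $n=3$ the four basis characters of $\tilde V_1$ multiply to the identity, so the intertwiner acquires extra nonzero entries; it is a multiplicative degeneracy in the character group, not an ``accidental eigenvalue coincidence'' as you claim), and finally extract $\adet u=1$ from the $(n{+}1)$-point block partition. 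For (b), saying the antipodal identification ``corresponds to the projective quotient'' hides the hardest part of the paper: the faithful block is $\tilde V_2=V_1\oplus V_2$, carrying the representation $v\brewedge v$ of $PO_{n+1}^{-1}$, and to run the upper-bound argument one needs a \emph{generating set} for the intertwiner category of $PO_{n+1}^{-1}$ expressed in terms of $v\brewedge v$. Establishing this (Proposition~\ref{P.projwedge}, Theorem~\ref{T.projwedge} and its corollaries) requires proving faithfulness of $v\brewedge v$ via simplicity of $PGL_n$ plus monoidal equivalence, and then a sequence of delicate diagrammatic computations in the antisymmetrized Brauer category, with exceptional dimensions $n+1=2,4,6,8$ that must be handled separately. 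Nothing in your outline anticipates this.

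For (c), your picture of the dual side is inaccurate: the lowest nontrivial eigenspace $V_1$ of $H(n,m)$ is a \emph{direct sum} of $n$ copies of the corresponding eigenspace of $K_m$ (dimension $n(m-1)$), not a tensor product, and the upper bound does not follow by ``factoring the Fourier transform.'' The paper instead decomposes the Fourier transform of the four-point intertwiner restricted to $V_1$ into a sum of several operators distinguished by whether the copy-indices agree or differ, and shows each summand is separately an intertwiner by computing powers of their sum --- the key identity produces a factor $m(m-2)$, which is where $m\neq 2$ is used; from these one derives the defining relations of $S_m^+\wr S_n$ one by one. Also, the lower bound for (c) is not automatic: one must prove that $G\wr S_n$ acts on an $n$-fold Cartesian product of graphs and that the resulting representation is faithful (Lemma~\ref{L.wr} and Proposition~\ref{P.wr}). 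In short, your proposal is a correct table of contents for the proof, but each chapter still has to be written, and the two specific mechanisms you offer (eigenvalue coincidences explaining $n\neq1,3$; tensor-factorization of eigenspaces for Hamming graphs) are not the ones that actually work.
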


Note that quantum symmetries of the folded hypercube and the Hamming graphs were already studied before \cite{Sch20dt,Sch20fq}, but determining the quantum automorphism group for a general value of the parameters was left open.

\section{Preliminaries}
\label{sec.prelim}

In this section we recall the basic notions of compact matrix quantum groups and Tannaka--Krein duality. For a~more detailed introduction, we refer to the monographs \cite{Tim08,NT13}.

\subsection{Notation}
In this work, we will often work with operators between some tensor powers of some vector spaces. Therefore, we adopt the ``physics notation'' with upper and lower indices for entries of these ``tensors''. That is, given $T\colon V^{\otimes k}\to V^{\otimes l}$ for some $V=\C^N$, we denote
$$T(e_{i_1}\otimes\cdots\otimes e_{i_k})=\sum_{j_1,\dots,j_l=1}^NT^{j_1\cdots j_l}_{i_1\cdots i_k}(e_{j_1}\otimes\cdots\otimes e_{j_l})$$
We will sometimes shorten the notation and write $T_\mathbf{i}^\mathbf{j}$ using multiindices $\mathbf{i}=(i_1,\dots,i_k)$, $\mathbf{j}=(j_1,\dots,j_l)$.


\subsection{Compact matrix quantum groups}
\label{secc.qgdef}
A~\emph{compact matrix quantum group} is a pair $G=(A,u)$, where $A$ is a~$*$-algebra and $u=(u^i_j)\in M_N(A)$ is a matrix with values in $A$ such that
\begin{enumerate}
\item the elements $u^i_j$, $i,j=1,\dots, N$ generate $A$,
\item the matrices $u$ and $u^t$ ($u$ transposed) are similar to unitary matrices,
\item the map $\Delta\colon A\to A\otimes A$ defined as $\Delta(u^i_j):=\sum_{k=1}^N u^i_k\otimes u^k_j$ extends to a~$*$-homomorphism.
\end{enumerate}

Compact matrix quantum groups introduced by Woronowicz \cite{Wor87} are generalizations of compact matrix groups in the following sense. For a~matrix group $G\subseteq M_N(\C)$, we define $u^i_j\colon G\to\C$ to be the coordinate functions $u^i_j(g):=g^i_j$. Then we define the \emph{coordinate algebra} $A:=\Olg(G)$ to be the algebra generated by $u^i_j$. The pair $(A,u)$ then forms a compact matrix quantum group. The so-called \emph{comultiplication} $\Delta\colon \Olg(G)\to \Olg(G)\otimes \Olg(G)$ dualizes matrix multiplication on $G$: $\Delta(f)(g,h)=f(gh)$ for $f\in \Olg(G)$ and $g,h\in G$.

Therefore, for a~general compact matrix quantum group $G=(A,u)$, the algebra~$A$ should be seen as the algebra of non-commutative functions defined on some non-commutative compact underlying space. For this reason, we often denote $A=\Olg(G)$ even if $A$ is not commutative. Actually, $A$ also has the structure of a~Hopf $*$-algebra. In addition, we can also define the C*-algebra $C(G)$ as the universal C*-completion of $A$, which can be interpreted as the algebra of continuous functions of $G$. The matrix $u$ is called the \emph{fundamental representation} of~$G$.

A compact matrix quantum group $H=(\Olg(H),v)$ is a~\emph{quantum subgroup} of $G=(\Olg(G),u)$, denoted as $H\subseteq G$, if $u$ and $v$ have the same size and there is a~surjective $*$-homomorphism $\phi\colon \Olg(G)\to \Olg(H)$ sending $u^i_j\mapsto v^i_j$. We say that $G$ and $H$ are \emph{equal} if there exists such a $*$-isomorphism (i.e. if $G\subset H$ and $H\subset G$). We say that $G$ and $H$ are \emph{isomorphic} if there exists a $*$-isomorphism $\phi\colon \Olg(G)\to \Olg(H)$ such that $(\phi\otimes\phi)\circ\Delta_G=\Delta_H\circ\phi$. We will often use the notation $H\subseteq G$ also if $H$ is isomorphic to a quantum subgroup of $G$.

One of the most important examples is the quantum generalization of the orthogonal group -- the \emph{free orthogonal quantum group} $O_N^+$ defined by Wang in \cite{Wan95free} through the universal $*$-algebra
$$\Olg(O_N^+):=\staralg(u^i_j,\;i,j=1,\dots,N\mid u^i_j=u^{i*}_j,uu^t=u^tu=\id_{\C^N}).$$
Note that this example was then further generalized by Banica~\cite{Ban96} into the \emph{universal free orthogonal quantum group} $O^+(F)$, where $F\in M_N(\C)$ such that $F\bar F=\pm\id_{\C^N}$ and
$$\Olg(O^+(F)):=\staralg(u^i_j,\;i,j=1,\dots,N\mid\text{$u$ is unitary, }u=F\bar uF^{-1}),$$
where $[\bar u]^i_j=u^{i*}_j$.

\subsection{Representation categories and Tannaka--Krein reconstruction}
\label{secc.Rep}

For a~compact matrix quantum group $G=(\Olg(G),u)$, we say that $v\in M_n(\Olg(G))$ is a~representation of $G$ if $\Delta(v^i_j)=\sum_{k}v^i_k\otimes v^k_j$, where $\Delta$ is the comultiplication. The representation $v$ is called \emph{unitary} if it is unitary as a~matrix, i.e. $\sum_k v^i_kv^{j*}_k=\sum_k v^{k*}_iv^k_j=\delta_{ij}$. In particular, an element $a\in \Olg(G)$ is a one-dimensional representation if $\Delta(a)=a\otimes a$. Another example of a quantum group representation is the fundamental representation $u$.

We say that a representation $v$ of $G$ is \emph{non-degenerate} if $v$ is invertible as a matrix (in the classical group theory, we typically consider only non-degenerate representations). It is \emph{faithful} if $\Olg(G)$ is generated by the entries of~$v$. The meaning of this notion is the same as with classical groups: Given a non-degenerate faithful representation $v$, the pair $G'=(\Olg(G),v)$ is also a compact matrix quantum group, which is isomorphic to the original $G$.

A subspace $W\subset\C^n$ is called an invariant subspace of $v$ if the projection $P\colon\C^n\to\C^n$ onto $W$ commutes with $v$, that is, $Pv=vP$. This then defines the \emph{subrepresentation} $w:=vP=Pv$. However, $w$ as a representation is degenerate. If we need to express the subrepresentation as a non-degenerate representation, we had better consider a coisometry $U\colon\C^n\to\C^m$ with $P=U^*U$ and define $w':=UvU^*$. A representation $v$ is called irreducible if it has no non-trivial subrepresentations.

For two representations $v\in M_n(\Olg(G))$, $w\in M_m(\Olg(G))$ of $G$ we define the space of \emph{intertwiners}
$$\Mor(v,w)=\{T\colon \C^n\to\C^m\mid Tv=wT\}.$$
The set of all representations of a given quantum group together with those intertwiner spaces form a rigid monoidal $*$-category, which will be denoted by $\Rep G$.

Nevertheless, since we are working with compact \emph{matrix} quantum groups, it is more convenient to restrict our attention only to certain representations related to the fundamental representation. If we work with orthogonal quantum groups $G\subset O_n^+$ (or $G\subset O^+(F)$ in general), then it is enough to focus on the tensor powers $u^{\otimes k}$ since the entries of those representations already linearly span the whole $\Olg(G)$.

Considering $G=(\Olg(G),u)\subset O^+(F)$, $F\in M_N(\C)$, we define
$$\RCat_G(k,l):=\Mor(u^{\otimes k},u^{\otimes l})=\{T\colon (\C^N)^{\otimes k}\to(\C^N)^{\otimes l}\mid Tu^{\otimes k}=u^{\otimes l}T\}.$$

The collection of such linear spaces forms a rigid monoidal $*$-category with the monoid of objects being the natural numbers with zero $\N_0$.

\begin{rem}
The term \emph{rigidity} means that there exists a \emph{duality morphism} $R\in\RCat(0,2)$ such that $(R^*\otimes\id_{\C^N})(\id_{\C^N}\otimes R)=\id_{\C^N}$. For quantum groups $G\subset O^+(F)$, the duality morphism is given by $R^{ij}=F^j_i$.

An important feature of rigidity is the so-called \emph{Frobenius reciprocity}, which basically means that the whole category $\RCat$ is determined by the spaces $\RCat(0,k)$, $k\in\N_0$ since $\RCat$ is closed under certain \emph{rotations}.
\end{rem}

Conversely, we can reconstruct any compact matrix quantum group from its representation category \cite{Wor88,Mal18}.

\begin{thm}[Woronowicz--Tannaka--Krein]
\label{T.TK}
Let $\RCat$ be a rigid monoidal $*$-category with $\N_0$ being the set of objects and $\RCat(k,l)\subset\Lin((\C^N)^{\otimes k},(\C^N)^{\otimes l})$. Then there exists a unique orthogonal compact matrix quantum group $G$ such that $\RCat=\RCat_G$. We have $G\subset O^+(F)$ with $F^j_i=R^{ij}$, where $R$ is the duality morphism of $\RCat$.
\end{thm}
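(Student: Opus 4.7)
The plan is to build $\Olg(G)$ from $\RCat$ by a universal construction, equip it with a Hopf $*$-algebra structure, and then argue that its representation category is precisely $\RCat$ rather than something larger.

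\emph{Construction of $\Olg(G)$.} I define $\Olg(G)$ as the universal unital $*$-algebra on generators $u^i_j$, $i,j=1,\dots,N$, subject to the relations
$$\sum_{\mathbf{j}'}T^{\mathbf{i}}_{\mathbf{j}'}\,u^{\mathbf{j}'}_{\mathbf{j}}=\sum_{\mathbf{i}'}u^{\mathbf{i}}_{\mathbf{i}'}\,T^{\mathbf{i}'}_{\mathbf{j}}$$
imposed once for every $T\in\RCat(k,l)$, all $k,l\in\N_0$ and all multi-indices. The duality morphism $R\in\RCat(0,2)$ yields the relation $u=F\bar uF^{-1}$ with $F^j_i=R^{ij}$, and together with $R^*\in\RCat(2,0)$ it forces $u$ to be similar to a unitary, so $(\Olg(G),u)\subset O^+(F)$ will be a compact matrix quantum group once a comultiplication is exhibited. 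To define $\Delta(u^i_j):=\sum_k u^i_k\otimes u^k_j$, I check that $\Delta$ respects the defining relations: applying $\Delta$ to $Tu^{\otimes k}=u^{\otimes l}T$ produces $(T\otimes 1)(u^{\otimes k}\otimes u^{\otimes k})=(u^{\otimes l}\otimes u^{\otimes l})(T\otimes 1)$ in $\Olg(G)\otimes\Olg(G)$, which is again a consequence of the relations.

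\emph{Easy inclusion and uniqueness.} By construction $\RCat(k,l)\subset\RCat_G(k,l)$. Uniqueness is immediate from the universal property: if $G'=(\Olg(G'),u')$ is any compact matrix quantum group satisfying $\RCat_{G'}=\RCat$, then $u'$ satisfies the defining relations of $\Olg(G)$, so there is a surjective $*$-homomorphism $\Olg(G)\to\Olg(G')$; symmetry provides the inverse, and both directions intertwine the coproducts since they agree on generators.

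\emph{Main obstacle.} The only substantial content is the reverse inclusion $\RCat_G(k,l)\subset\RCat(k,l)$, i.e.\ ruling out unexpected intertwiners in the universal algebra. I would follow the strategy of \cite{Mal18}: use the rigidity of $\RCat$ together with Frobenius reciprocity to reduce the claim to showing that the invariants of $u^{\otimes k}$ inside $(\C^N)^{\otimes k}$ are exactly $\RCat(0,k)$. Concretely, one realises $\Olg(G)$ as a quotient of the path algebra built from $\RCat$ and establishes a non-degenerate pairing between the linear span of matrix coefficients of $u^{\otimes k}$ and $\RCat(k,0)$. Non-degeneracy is where all the hypotheses enter: it relies on $\RCat$ being already closed under composition, tensoring, involution and the rotations furnished by $R$ and $R^*$. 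Everything else in the argument is bookkeeping forced by these closure properties, so this categorical non-degeneracy statement is the single genuinely nontrivial step.
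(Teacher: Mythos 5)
The paper does not actually prove this statement: Theorem~\ref{T.TK} is quoted as a known result with references to Woronowicz and Malacarne, so there is no in-paper proof to compare against. Judged on its own terms, your proposal correctly lays out the standard architecture --- the universal $*$-algebra on the entries of $u$ with one relation per intertwiner, the check that $\Delta$ is compatible with those relations, the trivial inclusion $\RCat\subset\RCat_G$, and the identification of the reverse inclusion as the real issue --- but it stops exactly where the theorem begins. Your final paragraph contains no argument: it names the statement to be proven (non-degeneracy of the pairing between the matrix coefficients of $u^{\otimes k}$ and $\RCat(k,0)$, equivalently that the invariant vectors of $u^{\otimes k}$ are precisely $\RCat(0,k)$), asserts that it ``relies on'' the closure properties of $\RCat$, and defers to the literature. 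That non-degeneracy \emph{is} the content of Tannaka--Krein duality; without it nothing rules out that the universal algebra collapses (even to $\C$, in which case every linear map becomes an intertwiner) or that unexpected invariant vectors appear. So as a self-contained proof the proposal has a genuine gap at its central step.

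Two smaller points. First, the uniqueness argument is not right as stated: ``symmetry provides the inverse'' would require $\Olg(G')$ to enjoy a universal property of its own, which is not assumed. The correct route is to show that the surjection $\Olg(G)\to\Olg(G')$ is injective because it matches matrix coefficients of $u^{\otimes k}$ with those of $u'^{\otimes k}$ and the linear relations among these coefficients on either side are governed by the same spaces $\RCat(k,l)$ (via Frobenius reciprocity and Peter--Weyl orthogonality) --- that is, injectivity reduces to the very non-degeneracy statement you deferred. Second, the claim that $R$ and $R^*$ force $u$ to be (similar to) a unitary deserves at least a line of computation: one combines $u^{\otimes2}R=R$ and $R^*u^{\otimes2}=R^*$ with the conjugation relation $u=F\bar uF^{-1}$ to obtain unitarity of $u$ and of $u^t$ up to similarity; this part is routine but should not be left implicit in a complete write-up.
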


We can write down the associated quantum group very concretely. The relations satisfied in the algebra $\Olg(G)$ will be exactly the intertwining relations:
$$\Olg(G)=\staralg(u^i_j,\;i,j=1,\dots,N\mid u=F\bar uF^{-1},\; Tu^{\otimes k}=u^{\otimes l}T\;\forall T\in\RCat(k,l)).$$

We say that $S$ is a generating set for a representation category $\RCat$ if $\RCat$ is the smallest monoidal $*$-category satisfying the assumptions of Theorem~\ref{T.TK} that contains $S$. We use the notation $\RCat=\langle S\rangle_N$ (it is important to specify the dimension $N$ of the vector space $V=\C^N$ associated to the object 1). If we know such a generating set, it is enough to use the generators for our relations:
$$\Olg(G)=\staralg(u^i_j,\;i,j=1,\dots,N\mid u=F\bar uF^{-1},\; Tu^{\otimes k}=u^{\otimes l}T\;\forall T\in S(k,l)).$$

\subsection{Partitions}
\label{secc.part}
Representation categories of homogeneous orthogonal quantum groups, that is, those $G$ such that $S_N\subset G\subset O_N^+$ are conveniently described using partitions. A \emph{partition} $p\in\Part(k,l)$ is a decomposition of $k$ \emph{upper} and $l$ \emph{lower} points into non-empty disjoint subsets called \emph{blocks}. For instance,
$$
p=
\BigPartition{
\Pblock 0 to 0.25:2,3
\Pblock 1 to 0.75:1,2,3
\Psingletons 0 to 0.25:1,4
\Pline (2.5,0.25) (2.5,0.75)
}
\qquad
q=
\BigPartition{
\Psingletons 0 to 0.25:1,4
\Psingletons 1 to 0.75:1,4
\Pline (2,0) (3,1)
\Pline (3,0) (2,1)
\Pline (2.75,0.25) (4,0.25)
}
$$

Given any partition $p\in\Part(k,l)$, we define a linear map $T_p^{(N)}\colon (\C^N)^{\otimes k}\to(\C^N)^{\otimes l}$, whose entries are given as ``blockwise Kronecker delta'' -- we label the $k$ upper points by indices $i_1,\dots,i_k$ and the $l$ lower points by indices $j_1,\dots,j_l$ and define $[T_p^{(N)}]^{j_1,\dots,j_l}_{i_1,\dots,i_k}$ to be one if and only if for any given block of $p$ all the corresponding indices are equal. For instance, working with the example above, we may write
$$[T_p^{(N)}]^{j_1j_2j_3j_4}_{i_1i_2i_3}=\delta_{i_1i_2i_3j_2j_3},\quad
  [T_q^{(N)}]^{j_1j_2j_3j_4}_{i_1i_2i_3i_4}=\delta_{i_2j_3j_4}\delta_{i_3j_2}.$$

\begin{thm}[\cite{Jon94}]
\label{T.Jones}
It holds that $\RCat_{S_N}(k,l)=\spanlin\{T_p^{(N)}\mid p\in\Part(k,l)\}$ for every $N\in\N$.
\end{thm}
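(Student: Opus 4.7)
The plan is to prove the two inclusions separately. For $\supseteq$, I would verify directly that each $T_p^{(N)}$ is an intertwiner. Since the fundamental representation of $S_N$ is a permutation representation, $u^{\otimes k}(\sigma)$ maps $e_{\mathbf{i}}\mapsto e_{\sigma(\mathbf{i})}$, where $\sigma(\mathbf{i}):=(\sigma(i_1),\dots,\sigma(i_k))$. The entry $[T_p^{(N)}]^{\mathbf{j}}_{\mathbf{i}}$ only tests equalities among entries of $(\mathbf{i},\mathbf{j})$ as prescribed by the blocks of $p$, hence is manifestly invariant under simultaneous relabelling $(\mathbf{i},\mathbf{j})\mapsto(\sigma(\mathbf{i}),\sigma(\mathbf{j}))$. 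Unwinding this invariance entry by entry yields $T_p^{(N)}u^{\otimes k}(\sigma)=u^{\otimes l}(\sigma)T_p^{(N)}$ for all $\sigma$.

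For the harder inclusion $\subseteq$, I would analyze the general intertwining relation, which entry-wise becomes $T^{\sigma(\mathbf{j})}_{\sigma(\mathbf{i})}=T^{\mathbf{j}}_{\mathbf{i}}$ for all $\sigma\in S_N$. Thus $T^{\mathbf{j}}_{\mathbf{i}}$ depends only on the $S_N$-orbit of the combined multiindex $(\mathbf{i},\mathbf{j})\in\{1,\dots,N\}^{k+l}$. The key observation is that these orbits are parameterized by \emph{kernel partitions}: the equivalence relation on $\{1,\dots,k+l\}$ declaring two positions equivalent iff they carry the same value. Two multiindices lie in the same orbit iff their kernels agree---the nontrivial direction uses that equal kernels force the two images to have equal cardinality (namely, the number of blocks of the common kernel), and this cardinality is at most $N$, so one can extend the obvious bijection between images to a permutation of $\{1,\dots,N\}$. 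Hence $\RCat_{S_N}(k,l)$ is spanned by the orbit indicator operators $E_p$ given by $[E_p]^{\mathbf{j}}_{\mathbf{i}}=1$ iff the kernel of $(\mathbf{i},\mathbf{j})$ equals $p$ exactly.

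It then remains to express each $E_p$ as a linear combination of the $T_q^{(N)}$'s. Noting that $[T_p^{(N)}]^{\mathbf{j}}_{\mathbf{i}}=1$ precisely when the kernel of $(\mathbf{i},\mathbf{j})$ is coarser than $p$, one obtains the triangular identity $T_p^{(N)}=\sum_{q\ge p}E_q$ in the partition lattice. Möbius inversion on this lattice then yields $E_p=\sum_{q\ge p}\mu(p,q)\,T_q^{(N)}$, completing the argument. The only bookkeeping subtlety, which I expect to be the mildest obstacle, is that for partitions $p$ with more than $N$ blocks the operator $E_p$ vanishes, so the inversion produces linear dependencies among the $T_q^{(N)}$'s rather than new basis elements; this affects linear independence of the $T_p^{(N)}$'s for small $N$ but not the spanning statement the theorem asserts.
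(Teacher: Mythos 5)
Your proposal is correct and complete. Note that the paper does not actually prove this statement---it is quoted from \cite{Jon94} as a known classical result---so there is no in-paper argument to compare against; your proof is the standard one. Both halves are sound: the easy inclusion follows, as you say, because the entries of $T_p^{(N)}$ depend only on the equality pattern of the combined multiindex, which is preserved under simultaneous relabelling by a permutation; and for the reverse inclusion, the entrywise intertwining relation $T^{\sigma(\mathbf{j})}_{\sigma(\mathbf{i})}=T^{\mathbf{j}}_{\mathbf{i}}$ does reduce the problem to classifying $S_N$-orbits of multiindices, which are exactly the kernel partitions with at most $N$ blocks. Your handling of the two subtleties is also right: extending the bijection of images to a full permutation uses that both images have the same cardinality bounded by $N$, and the passage from the orbit indicators $E_p$ back to the maps $T_q^{(N)}$ via the unitriangular relation $T_p^{(N)}=\sum_{q\ge p}E_q$ (whether by M\"obius inversion or just by upper-triangularity with unit diagonal) shows the two families have the same span, with the vanishing of $E_p$ for partitions of more than $N$ blocks affecting only linear independence, which the theorem does not claim.
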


We define $\Partlin_N(k,l):=\spanlin\Part(k,l)$ the space of formal linear combinations of partitions. On this collection of linear spaces, we may define the structure of a monoidal $*$-category in terms of simple pictorial manipulations (see e.g.\ \cite{GWintsp} for details) such that they respect the category structure in $\RCat_{S_n}$. In other words, the mapping $T^{(N)}\colon\Partlin_n\to\RCat_{S_n}$ is a monoidal unitary functor. (Note that passing to the linear spaces is often omitted, but in this article, we need them.)

As a consequence, any homogeneous quantum group $O_N^+\supset G\supset S_N$ can be described using some diagrammatic category of partitions. Thanks to the Tannaka--Krein duality, we also have the converse -- any category of partitions defines a some homogeneous compact matrix quantum group \cite{BS09}. For more information, see the survey \cite{Web17} or the author's PhD thesis \cite{GroThesis}.

Finally, let us mention that when working with anticommutative deformations of groups (see the next section), then it might (although sometimes might not) be convenient to use a deformed functor. Let $p\in\Part(k,l)$ be a partition that does not contain any block of odd size. Then we define a linear operator $\breve T_p^{(N)}$ by
$$[\breve T_p^{(N)}]^\mathbf{j}_\mathbf{i}=\sigma_\mathbf{i}\sigma_\mathbf{j}[T_p^{(N)}]^\mathbf{j}_\mathbf{i},$$
where $\sigma_\mathbf{i}$ is a certain sign function: Given a multiindex $\mathbf{i}=(i_1,\dots,i_k)$, we count the number of pairs $(k,l)$ such that $k<l$, but $i_k>i_l$. If this number is odd, then $\sigma_\mathbf{i}=-1$; otherwise $\sigma_\mathbf{i}=1$. See \cite[Section~7]{GWgen}.

\subsection{Anticommutative deformations}
In this work, we will often work with certain anticommutative deformations of classical groups. We say that a~matrix $u$ has \emph{anticommutative entries} if the following relations hold
$$u^i_ku^j_k=-u^j_ku^i_k,\quad u^k_iu^k_j=-u^k_ju^k_i,\quad u^i_ku^j_l=u^j_lu^i_k$$
assuming $i\neq j$ and $k\neq l$.

As an example, let us mention the \emph{anticommutative orthogonal quantum group}
$$\Olg(O_N^{-1})=\staralg(u^i_j\mid\text{$u=\bar u$, $u$ orthogonal, $u$ anticommutative}).$$

There is a whole theory about $q$-deformations of classical groups, where taking $q=1$ gives the classical case and $q=-1$ gives usually the anticommutative one, see \cite{KS97} for more details.

\subsection{Exterior products -- classical case}

In this section, we would like to recall the definition of exterior products in connection with the representation theory of classical groups.

Let $V$ be a vector space. We define the \emph{exterior product} $V\wedge V$ and, more generally, the exterior powers $\Lambda_k(V)=V^{\wedge k}$ as follows. $V^{\wedge k}$ is the vector subspace of $V^{\otimes k}$ generated by the elements
$$v_1\wedge\cdots\wedge v_k={1\over k!}\sum_{\sigma\in S_k}\sgn(\sigma)v_{\sigma(1)}\otimes\cdots\otimes v_{\sigma(k)}$$
We denote by $\A_k\colon V^{\otimes k}\to V^{\wedge k}$ the coisometry mapping $v_1\otimes\cdots\otimes v_k\mapsto v_1\wedge\cdots\wedge v_k$ and call it the \emph{antisymmetrizer}. (That is, $\A_k^*\A_k\colon V^{\otimes k}\to V^{\otimes k}$ is the projection onto $V^{\wedge k}$ taken as a subspace of $V^{\otimes k}$.)

The motivation for such a definition is the following.

\begin{prop}
Let $G$ be any group and $V$ some $G$-module. Then $V^{\wedge k}$ is always a submodule of $V^{\otimes k}$.
\end{prop}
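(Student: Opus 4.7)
The plan is to exhibit $V^{\wedge k}$ as the image of a projection built from the symmetric group action, and then observe that the $G$-action commutes with this projection.

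First, recall that $G$ acts on $V^{\otimes k}$ diagonally, i.e.\ $g\cdot(v_1\otimes\cdots\otimes v_k)=(gv_1)\otimes\cdots\otimes(gv_k)$. The symmetric group $S_k$ also acts on $V^{\otimes k}$ by permuting tensor factors: $\sigma\cdot(v_1\otimes\cdots\otimes v_k)=v_{\sigma^{-1}(1)}\otimes\cdots\otimes v_{\sigma^{-1}(k)}$. These two actions commute: applying $g$ to each tensor factor and then permuting the factors yields the same result as permuting first and then applying $g$ to each, since $g$ is applied identically to every factor.

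Second, identify $V^{\wedge k}\subset V^{\otimes k}$ with the image of the antisymmetrization projection $P=\A_k^*\A_k=\frac{1}{k!}\sum_{\sigma\in S_k}\sgn(\sigma)\,\sigma$, which is a linear combination of the permutation operators used above. Since each individual $\sigma$ commutes with the $G$-action by the preceding paragraph, so does $P$. Consequently, for any $w\in V^{\wedge k}$ we have $w=Pw$ and therefore $gw=gPw=Pgw\in V^{\wedge k}$, proving $G$-invariance.

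There is no real obstacle here; the statement is essentially a formal consequence of the fact that the diagonal $G$-action and the $S_k$ permutation action on $V^{\otimes k}$ centralize each other, a simple instance of Schur--Weyl-type duality. The only thing to be mildly careful about is writing $V^{\wedge k}$ concretely as the image of $P$ (rather than as an abstract quotient), which is consistent with the convention adopted in the paper.
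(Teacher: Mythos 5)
Your proof is correct and is essentially the paper's argument: both reduce the claim to showing that the antisymmetrizer $\A_k^*\A_k$ commutes with the $k$-th tensor power of the representation. The only difference is presentational --- you argue coordinate-free at the level of group elements, where the commutation of the diagonal $G$-action with the $S_k$-permutation action is automatic, whereas the paper computes $u^{\otimes k}\A_k^*\A_k=\A_k^*\A_k u^{\otimes k}$ entrywise in the coordinate algebra, deliberately isolating the commutativity of the entries $u^i_j$ as the crucial ingredient (since that is precisely what must be modified in the anticommutative deformation treated next).
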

In particular, we can consider $G=O_n$ acting on $V=\C^n$ by standard matrix multiplication. Let us prove this statement from a quantum group point of view.
\begin{proof}
Let $u\in C(G)\otimes GL(V)$ be the representation of $G$ on $V$. We need to prove that $u^{\otimes k}\A_k^*\A_k=\A_k^*\A_k u^{\otimes k}$. Let us express both sides in coordinates.
\begin{align*}
[u^{\otimes k}\A_k^*\A_k]^\mathbf{i}_\mathbf{j}&={1\over k!}\sum_{\sigma\in S_k}\sgn(\sigma)u^{i_1}_{\sigma^{-1}(j_1)}\cdots u^{i_k}_{\sigma^{-1}(j_k)}\\
[\A_k^*\A_k u^{\otimes k}]^\mathbf{i}_\mathbf{j}&={1\over k!}\sum_{\sigma\in S_k}\sgn(\sigma)u^{\sigma(i_1)}_{j_1}\cdots u^{\sigma(i_k)}_{j_k} 
\end{align*}
The terms $u^{i_1}_{\sigma^{-1}(j_1)}\cdots u^{i_k}_{\sigma^{-1}(j_k)}$ and $u^{\sigma(i_1)}_{j_1}\cdots u^{\sigma(i_k)}_{j_k}$ differ just by reordering of the factors. Since we assume that $G$ is a classical group, the entries of $u$ are commutative, so the terms must be equal.
\end{proof}

We denote by
$$u^{\wedge k}:=\A_k u^{\otimes k}\A_k^*$$
the corresponding subrepresentation.


The dimension of the $k$-th exterior power $V^{\wedge k}$ equals $\binom{n}{k}$, where $n=\dim V$. The highest nonzero power is therefore the $n$-th, which is one-dimensional. Given a representation $u$ of some group $G$, the $n$-th exterior power of $u$ equals to the \emph{determinant} $u^{\wedge n}=\det u$.

\subsection{Exterior products -- anticommutative case}

The concept of exterior product does not work in general for quantum groups. Let us revise it here for the case of anticommutative deformations.

For this purpose we need to introduce some sort of ``anticommutative antisymmetrization''. This should be basically the same thing as the usual \emph{symmetrization}, but, in addition, we have to ``throw out the diagonal'' again. (Recall that classically we have $v_1\wedge\cdots\wedge v_k=0$ whenever $v_a=v_b$ for some $a,b$.)

We define $V^{\brewedge k}$ to be the vector subspace of $V^{\otimes k}$ generated by the elements
$$v_1\brewedge\cdots\brewedge v_k=
\begin{cases}
0&\text{if $v_a=v_b$ for some $a\neq b$}\\
\frac{1}{k!}\sum_{\sigma\in S_k}v_{\sigma(1)}\otimes\cdots\otimes v_{\sigma(k)}&\text{otherwise}
\end{cases}$$
We denote by $\breve\A\colon V^{\otimes k}\to V^{\brewedge k}$ the coisometry mapping $v_1\otimes\cdots\otimes v_k\mapsto v_1\brewedge\cdots\brewedge v_k$.

\begin{rem}
If we view anticommutative deformations as 2-cocycle twists of usual groups, then this procedure amounts to twisting the intertwiner $\A_k^*\A_k$ of $u$. In this sense, the rest of this subsection might be considered as obvious, but it does not harm to recall the facts explicitly.
\end{rem}

\begin{prop}
Consider $G\subset O_n^{-1}$ and denote by $u$ its fundamental representation. Then $u^{\otimes k}\breve\A_k^*\breve\A_k=\breve\A_k^*\breve\A_k u^{\otimes k}$.
\end{prop}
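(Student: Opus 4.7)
The plan is to mirror the classical-case proof by expressing both sides of the identity in coordinates and comparing them, using the anticommutation relations satisfied by $u$. First I note that $\breve\A_k^*\breve\A_k$ is simply the orthogonal projection of $V^{\otimes k}$ onto the subspace $V^{\brewedge k}$, and a direct computation from the definition of $\brewedge$ shows that
$$
[\breve\A_k^*\breve\A_k]^\mathbf{i}_\mathbf{j} =
\begin{cases}
1/k! & \text{if $\mathbf{i}$ and $\mathbf{j}$ have pairwise distinct entries and are permutations of one another,}\\
0 & \text{otherwise.}
\end{cases}
$$
Consequently
$$
[u^{\otimes k}\breve\A_k^*\breve\A_k]^\mathbf{i}_\mathbf{j} = \frac{1}{k!}\sum_{\sigma\in S_k} u^{i_1}_{j_{\sigma(1)}}\cdots u^{i_k}_{j_{\sigma(k)}}
$$
whenever $\mathbf{j}$ has pairwise distinct entries (and $0$ otherwise), while the symmetric expression $\frac{1}{k!}\sum_\sigma u^{i_{\sigma(1)}}_{j_1}\cdots u^{i_{\sigma(k)}}_{j_k}$ holds for $[\breve\A_k^*\breve\A_k\,u^{\otimes k}]^\mathbf{i}_\mathbf{j}$ when $\mathbf{i}$ has distinct entries.

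The comparison then splits according to whether the entries of $\mathbf{i}$ and $\mathbf{j}$ are pairwise distinct. If both have repeats, both sides vanish. If both are distinct, then for any two factors $u^{i_a}_{j_{\sigma(a)}}$ and $u^{i_b}_{j_{\sigma(b)}}$ in a single term I have $i_a \ne i_b$ and $j_{\sigma(a)} \ne j_{\sigma(b)}$, so the mixed relation $u^i_k u^j_l = u^j_l u^i_k$ ($i\ne j,\,k\ne l$) lets me reorder factors at will. Each term thus reduces to $\prod_a u^{i_a}_{\pi(i_a)}$ for the bijection $\pi\colon i_a\mapsto j_{\sigma(a)}$, and as $\sigma$ ranges over $S_k$ these bijections exhaust all possibilities; the right-hand side is indexed by $\sigma^{-1}$ and produces the same collection of terms, so both sums coincide.

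The interesting case is when exactly one side --- say $\mathbf{j}$ --- has a repeat $j_a = j_b =: j$, while $\mathbf{i}$ is distinct (the reverse case is symmetric, using $u^k_i u^k_j = -u^k_j u^k_i$ in place of $u^i_k u^j_k = -u^j_k u^i_k$). Then $[u^{\otimes k}\breve\A_k^*\breve\A_k]^\mathbf{i}_\mathbf{j} = 0$ automatically, and I must show that $[\breve\A_k^*\breve\A_k\,u^{\otimes k}]^\mathbf{i}_\mathbf{j}$ vanishes too. To do this I pair each $\sigma \in S_k$ with $\sigma\tau$ for $\tau = (a,b)$: the corresponding terms $T_\sigma$ and $T_{\sigma\tau}$ agree at every position other than $a,b$, where a pair $u^p_j,\,u^q_j$ with $p := i_{\sigma(a)} \ne i_{\sigma(b)} =: q$ appears in opposite orders. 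Bringing these two factors adjacent via the free commutations with the intermediate $u^{i_{\sigma(m)}}_{j_m}$ and then applying $u^p_j u^q_j = -u^q_j u^p_j$ yields $T_\sigma + T_{\sigma\tau} = 0$, so each pair contributes zero and the whole sum vanishes. The main obstacle is the sign bookkeeping in this last step: I need the signs accumulated when moving $u^p_j$ and $u^q_j$ past the intermediate factors to agree in the two paired terms. This should follow because distinctness of $\mathbf{i}$ rules out $i_{\sigma(m)} \in \{p,q\}$ for $a<m<b$, so the only nontrivial signs come from positions where $j_m = j$, and these positions are determined by $\mathbf{j}$ alone (independently of $\sigma$ versus $\sigma\tau$).
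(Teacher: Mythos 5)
Your proof is correct and follows essentially the same route as the paper: write both sides entrywise, observe that for distinct multi-indices the factors all mutually commute so the sums agree, and for a repeated index pair the symmetrized sum cancels via the anticommutation relations. You simply make explicit the pairing involution $\sigma\mapsto\sigma\tau$ and the sign bookkeeping that the paper leaves implicit when it asserts the symmetrization vanishes.
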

In other words, this means that $(\C^n)^{\brewedge k}$ is an invariant space of the representation $u^{\otimes k}$. We denote the corresponding subrepresentation by
$$u^{\brewedge k}:=\breve\A_k u^{\otimes k}\breve\A_k^*.$$
\begin{proof}
Let us write both sides of the equation entrywise.
\begin{align}
[u^{\otimes k}\breve\A_k^*\breve\A_k]^\mathbf{i}_\mathbf{j}&=
\begin{cases}
0&\text{if $j_a=j_b$ for some $a\neq b$}\\
\frac{1}{k!}\sum_{\sigma\in S_k}u^{i_1}_{\sigma^{-1}(j_1)}\cdots u^{i_k}_{\sigma^{-1}(j_k)}&\text{otherwise}
\end{cases}\\
[\breve\A_k^*\breve\A_k u^{\otimes k}]^\mathbf{i}_\mathbf{j}&=
\begin{cases}
0&\text{if $i_a=i_b$ for some $a\neq b$}\\
\frac{1}{k!}\sum_{\sigma\in S_k}u^{\sigma(i_1)}_{j_1}\cdots u^{\sigma(i_k)}_{j_k}&\text{otherwise}
\end{cases}
\end{align}
Notice again that $u^{i_1}_{\sigma^{-1}(j_1)}\cdots u^{i_k}_{\sigma^{-1}(j_k)}$ and $u^{\sigma(i_1)}_{j_1}\cdots u^{\sigma(i_k)}_{j_k}$ coincide up to ordering of the factors. If both $\bf i$ and $\bf j$ consist of mutually distinct indices, then the factors commute and hence the terms are equal. If $j_a=j_b$ for some $a\neq b$, then the factors $u^{\sigma(i_a)}_{j_a}$ and $u^{\sigma(i_b)}_{j_b}$ mutually anticommute. Consequently, the symmetrization $\sum_{\sigma}u^{\sigma(i_1)}_{j_1}\cdots u^{\sigma(i_k)}_{j_k}$ equals to zero. The same applies in the case when $i_a=i_b$ for some $a\neq b$.
\end{proof}

The dimension of the anticommutative exterior powers are again given by the binomial coefficients. So, taking the $n$-th power, we can define the \emph{anticommutative determinant} as follows
\begin{align*}
\adet u:=u^{\brewedge n}&=\sum_{\sigma\in S_n}u^{1}_{\sigma(1)}\cdots u^{n}_{\sigma(n)}=\sum_{\sigma\in S_n}u^{\pi(1)}_{\sigma(1)}\cdots u^{\pi(n)}_{\sigma(n)}\\
&=\sum_{\pi\in S_n}u^{\pi(1)}_{1}\cdots u^{\pi(n)}_{n}=\sum_{\pi\in S_n}u^{\pi(1)}_{\sigma(1)}\cdots u^{\pi(n)}_{\sigma(n)}.
\end{align*}
Since we assume the anticommutativity, all the factors of the terms always mutually commute. From this, the different ways to write down the determinant follow. The definition is very similar to the one of classical determinant --  we are only missing the sign of the permutation in the sum. Such an object is sometimes considered also in the classical theory of matrices, where it is called the \emph{permanent}. Since $\adet u$ is a one-dimensional representation of $O_n^{-1}$, it defines a quantum subgroup $SO_n^{-1}$ called the \emph{anticommutative special orthogonal quantum group}
$$\Olg(SO_n^{-1})=\staralg(u^i_j\mid\text{$u=\bar u$, $u$ orthogonal, $u$ anticommutative, $\adet u=1$}).$$

Note that the relation $\adet u=1$ can be seen also as an intertwiner relation $\breve\A_nu^{\otimes n}=\breve\A_n$. In other words $SO_n^{-1}$ is a quantum subgroup of $O_n^{-1}$ that was created by adding the intertwiner $\breve\A_n\in\Mor(u^{\otimes n},1)$ to its representation category. Similarly, $SO_n$ can be created from $O_n$ by imposing the intertwiner $\A_n\in\Mor(u^{\otimes n},1)$.

\subsection{Projective versions}
\label{secc.proj}

Let $G\subset O_N^+$ be an orthogonal compact matrix quantum group and denote by $u$ its fundamental representation. Then $u\otimes u$ is surely its representation, but its matrix entries may not generate the whole algebra $\Olg(G)$. Denote by $\Olg(PG)$ the $*$-subalgebra of $\Olg(G)$ generated by entries of $u\otimes u$, i.e.\ elements of the form $u^i_ju^k_l$. Then $PG:=(\Olg(PG),u\otimes u)$ is a compact matrix quantum group called the \emph{projective version} of $G$.

\begin{prop}
\label{P.POiso}
Consider $N\in\N$ odd. Then $PO_N^q\simeq SO_N^q$.
\end{prop}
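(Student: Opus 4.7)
The plan is to construct mutually inverse Hopf $*$-algebra homomorphisms between $\Olg(SO_N^q)$ and $\Olg(PO_N^q)$. The motivating classical picture: for $N$ odd, the map $O_N\to SO_N$, $g\mapsto \det(g)\,g$ is well-defined (since $\det(\det(g)g)=(\det g)^{N+1}=1$), is constant on the cosets of the centre $\{\pm I\}$, and so descends to an isomorphism $PO_N\to SO_N$. Dually, one expects the generator $v^i_j\in \Olg(SO_N^q)$ to correspond to $\adet u\cdot u^i_j\in \Olg(O_N^q)$. Since $N$ is odd, this element has total degree $N+1$ in the matrix generators, which is even, so it indeed lies in the subalgebra $\Olg(PO_N^q)\subset \Olg(O_N^q)$.

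First I would establish three auxiliary facts about $\adet u\in \Olg(O_N^q)$. (i) Self-adjointness: in each term $u^1_{\sigma(1)}\cdots u^N_{\sigma(N)}$ the factors have pairwise distinct row indices and pairwise distinct column indices, hence pairwise commute, so reversing the product has no effect and $(\adet u)^*=\adet u$. (ii) Centrality: for the anticommutative case $q=-1$, moving $u^i_j$ past one term $u^1_{\sigma(1)}\cdots u^N_{\sigma(N)}$ produces zero sign flips when $\sigma(i)=j$ and exactly two sign flips otherwise (one from the factor $u^i_{\sigma(i)}$ sharing row $i$, one from the factor $u^{\sigma^{-1}(j)}_j$ sharing column $j$); thus $\adet u$ is central. (iii) $(\adet u)^2=1$: since $\adet u$ is a one-dimensional representation it is group-like and hence unitary, and combining $(\adet u)^*=\adet u$ with $S(\adet u)=\adet u$ (again using pairwise commutation of factors within each term) gives $(\adet u)^2=1$.

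Next I define $\psi\colon \Olg(SO_N^q)\to \Olg(PO_N^q)$ on generators by $v^i_j\mapsto \adet u\cdot u^i_j$ and check it respects all defining relations of $SO_N^q$: orthogonality uses $(\adet u)^2=1$; the $q$-commutation relations follow from centrality of $\adet u$; self-adjointness is immediate; $\adet v=1$ follows from $\psi(\adet v)=(\adet u)^N\cdot \adet u=(\adet u)^{N+1}=1$ for $N$ odd; and compatibility with the comultiplication follows from the group-like identity $\Delta(\adet u)=\adet u\otimes \adet u$. Conversely, I take $\phi\colon \Olg(PO_N^q)\to \Olg(SO_N^q)$ to be the restriction of the canonical projection $\pi\colon \Olg(O_N^q)\to \Olg(SO_N^q)$ to the Hopf subalgebra $\Olg(PO_N^q)$. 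Then $\phi\psi(v^i_j)=\pi(\adet u\cdot u^i_j)=1\cdot v^i_j=v^i_j$ and $\psi\phi(u^i_j u^k_l)=(\adet u\cdot u^i_j)(\adet u\cdot u^k_l)=(\adet u)^2 u^i_j u^k_l=u^i_j u^k_l$, so $\phi$ and $\psi$ are mutually inverse, yielding the desired Hopf $*$-algebra isomorphism.

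The main obstacle is the centrality of $\adet u$ in the anticommutative case. This is a careful bookkeeping of anticommutation signs, but its conclusion, that the two contributing sign flips always come in pairs, is what makes the rest of the argument go through; once centrality (and the associated identities $S(\adet u)=(\adet u)^*=\adet u$, $(\adet u)^2=1$) is in hand, every remaining verification reduces to manipulation with a central self-adjoint involution in $\Olg(O_N^q)$.
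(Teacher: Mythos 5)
Your proposal is correct and follows essentially the same route as the paper: both construct the map $v^i_j\mapsto \adet u\cdot u^i_j$ (well-defined into $\Olg(PO_N^q)$ because $N+1$ is even), take the reverse map to be the restriction of the quotient $\Olg(O_N^q)\to\Olg(SO_N^q)$, and check the two are mutually inverse. The extra details you supply (self-adjointness, centrality, and $(\adet u)^2=1$) are exactly the verifications the paper leaves as ``easy to check,'' and your sign bookkeeping for centrality in the $q=-1$ case is sound.
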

For our work, we need only $q=+1$ (classical case) and $q=-1$ (anticommutative case). Nevertheless, the statement and its proof actually does not depend on $q$ and we could take any deformation of the orthogonal group here.
\begin{proof}
Denote by $u$ the fundamental representation of $O_N^q$, so that $u\otimes u$ is the fundamental representation of $PO_N^q$. Denote by $v$ the fundamental representation of $SO_N^+$.

We claim that there is a $*$-homomorphism $\alpha\colon \Olg(SO_N^q)\to \Olg(PO_N^q)$ mapping
$$v^i_j\mapsto u^i_j\det_q u$$
First, note that $u^i_j\det_q u$ is a polynomial of even degree in the entries of $u$ (since $N$ is odd, so the determinant is of odd degree) and hence $u^i_j\det_q u$ is indeed an element of $\Olg(PO_N^q)$. Secondly, it is easy to check that all relations of $SO_N^q$ are satisfied by the image. In particular the determinant equals to one since $\det_q(u^i_j\det_q u)_{i,j}=(\det_q u)^2=1$.

On the other hand, there is surely a $*$-homomorphism $\beta\colon \Olg(PO_N^q)\to \Olg(SO_N^q)$ mapping $u^i_ju^k_l\mapsto v^i_jv^k_l$ since this is nothing but the restriction of the quotient map $\Olg(O_N^q)\to \Olg(SO_N^q)$. Finally, it is easy to check that both $\beta\circ\alpha$ and $\alpha\circ\beta$ equal to the identity, so the maps must actually be isomorphisms.
\end{proof}


\section{Warm-up: Quantum symmetries of the classical hypercube}
\label{sec.hypercube}

In this section, we would like to revisit the result \cite[Theorem~4.2]{BBC07} saying that the quantum automorphism group of the $n$-dimensional hypercube graph is the anticommutative orthogonal group $O_n^{-1}$. Our aim is to explain the proof in slightly more detail and provide more explicit computations to make everything clear.

Throughout the whole paper, we are going to rely heavily on the theory of representation categories and we will express everything in terms of intertwiners. This may seem a bit clumsy in this particular case (in comparison with the approach of \cite{BBC07} for instance), but it will become very handy in the following sections, where we are going to study quantum symmetries of some other graphs. Using intertwiners, we will be able to formulate our approach in a very general way for arbitrary Cayley graphs of abelian groups.

\subsection{Quantum automorphism group of a graph}

We define the \emph{free symmetric quantum group} \cite{Wan98} $S_N^+=(C(S_N^+),u)$, where
$$\Olg(S_N^+)=\staralg(u^i_j,\;i,j=1,\dots,N\mid (u^i_j)^2=u^i_j=u^{i*}_j,\;\sum_k u^i_k=1).$$

It holds that $S_N^+$ describes all \emph{quantum symmetries} of the space of $N$ discrete points. What we mean by this is that $S_N^+$ is the largest quantum group that faithfully \emph{acts} on the space of $N$ points. Let us look on this property in even more detail.

Denote by $X_N=\{1,\dots,N\}$ the set of $N$ points. We can associate to $X_N$ the algebra of all functions $C(X_N)$, which has a basis $\delta_1,\dots,\delta_N$ of the canonical projections, that is, functions $\delta_i(j)=\delta_{ij}$. An \emph{action} of a quantum group $G$ on $X_N$ is described by a \emph{coaction} of the associated Hopf $*$-algebra, that is, a $*$-homomorphism $C(X_N)\to C(X_N)\otimes \Olg(G)$ satisfying some axioms.

Now, note that since $(\delta_i)$ is a linear basis of $C(X_N)$, the action of any compact quantum group on $X_N$ must be of the form $\delta_j\mapsto\sum_{i=1}^N \delta_i\otimes v^i_j$. The axioms of a coaction are now equivalent to the fact that $v$ is a representation of the acting quantum group $G$. The algebra $C(X_N)$ can be defined as the universal C*-algebra generated by $\delta_i$ satisfying the relations $\delta_i^2=\delta_i=\delta_i^*$ and $\sum_i\delta_i=1$. Now, it is easy to check that the requirement of the coaction being a $*$-homomorphism exactly corresponds to the defining relations of $C(S_N^+)$.

Alternatively, one can see the homomorphism condition also as some kind of an intertwiner relation. $S_N^+$ can be seen as a quantum subgroup of $O_N^+$ with respect to the relation $uT^{(N)}_{\mergepart}=T^{(N)}_{\mergepart}(u\otimes u)$, that is, requiring $T^{(N)}_{\mergepart}\in\Mor(u\otimes u,u)$. Here $T^{(N)}_{\mergepart}$ is a tensor $C^N\to\C^N\otimes\C^N$ defined by $[T^{(N)}_{\mergepart}]_{ij}^k=\delta_{ijk}$. See also \cite{Ban99,Ban02}.

Actually, it is easy to check that the partition $\pairpart$ defining $O_N^+$ together with $\mergepart$ defining $S_N^+\subset O_N^+$ generate all \emph{non-crossing partitions}, that is, partitions where the strings do not cross. Let us denote by $NC(k,l)$ the set of all partitions $p\in\Part(k,l)$, which are non-crossing. This provides a ``free quantum analogue'' to Theorem~\ref{T.Jones} of Jones.
\begin{prop}[\cite{BS09}]
It holds that $\RCat_{S_N^+}(k,l)=\spanlin\{T_p^{(N)}\mid p\in NC(k,l)\}$ for every $N\in\N$.
\end{prop}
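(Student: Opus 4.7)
The plan is to prove the two inclusions separately, with the nontrivial direction handled by Tannaka--Krein reconstruction (Theorem~\ref{T.TK}).

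First, for the inclusion $\supset$, it suffices to exhibit a generating set. The pair partition $\pairpart$ gives the duality morphism witnessing $S_N^+\subset O_N^+$, so $T_{\pairpart}^{(N)}\in\RCat_{S_N^+}$. The merge partition $\mergepart$ is an intertwiner by construction: the discussion preceding the proposition shows that $T^{(N)}_{\mergepart}\in\Mor(u\otimes u,u)$ is precisely the relation that cuts $S_N^+$ out inside $O_N^+$. Together with their rotations (singletons, cup, cap, duals), these generate the category of non-crossing partitions under the monoidal $*$-category operations; this is a purely combinatorial fact about $NC$, proved by induction on the number of blocks or crossings. Since the functor $p\mapsto T_p^{(N)}$ is monoidal and unitary (as recalled after Theorem~\ref{T.Jones}), every $T_p^{(N)}$ with $p\in NC(k,l)$ lies in $\RCat_{S_N^+}(k,l)$.

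Second, for the inclusion $\subset$, I would invoke Tannaka--Krein. Let $\RCat'(k,l):=\spanlin\{T_p^{(N)}\mid p\in NC(k,l)\}$. By the closure property just established, $\RCat'$ is a rigid monoidal $*$-subcategory of $\RCat_{S_N^+}$, so by Theorem~\ref{T.TK} there is a unique compact matrix quantum group $G'$ with $\RCat_{G'}=\RCat'$. Because $\RCat'\subset\RCat_{S_N^+}$, we have $S_N^+\subset G'$. It remains to show the reverse inclusion $G'\subset S_N^+$, which will force $\RCat_{G'}=\RCat_{S_N^+}$ and conclude the proof.

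To check $G'\subset S_N^+$, I would use the concrete presentation
\[
\Olg(G')=\staralg(u^i_j\mid u=F\bar uF^{-1},\ Tu^{\otimes k}=u^{\otimes l}T\ \forall T\in S(k,l)),
\]
with generating set $S=\{T^{(N)}_{\pairpart},T^{(N)}_{\mergepart}\}$ (together with their adjoints and the duality $R$). Writing these intertwiner equations entrywise reproduces exactly the defining relations of $S_N^+$: the pair partition and its dual enforce $uu^t=u^tu=1$ and the selfadjointness $u^i_j=u^{i*}_j$, while the merge relation $T^{(N)}_{\mergepart}(u\otimes u)=uT^{(N)}_{\mergepart}$ together with its adjoint gives $u^i_ju^i_k=\delta_{jk}u^i_j$ and $u^i_ju^k_j=\delta_{ik}u^i_j$, i.e.\ the projection and row-orthogonality conditions. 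Combined with the row-sum relation coming from the singleton partition, these are precisely the relations in $\Olg(S_N^+)$, so the canonical surjection $\Olg(G')\to\Olg(S_N^+)$ is an isomorphism.

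I expect the main obstacle to be the closure argument in the first paragraph: one must verify carefully that the combinatorial category generated by $\pairpart$ and $\mergepart$ under tensor product, composition, rotation, and involution is exactly $NC$, with no additional partitions appearing and without producing any crossings. This is standard (see \cite{BS09}) but is the only step that requires genuine combinatorial work; everything else is bookkeeping via Tannaka--Krein.
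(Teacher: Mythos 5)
The paper offers no proof of this proposition at all: it is imported from \cite{BS09}, with the combinatorial core dismissed as ``easy to check'' in the sentence preceding the statement. So there is no in-paper argument to compare against, only the standard one, which is essentially what you reconstruct. Your outline is sound: the inclusion $\supset$ follows from $T^{(N)}_{\pairpart},T^{(N)}_{\mergepart}\in\RCat_{S_N^+}$, monoidality of $p\mapsto T_p^{(N)}$, and the combinatorial identity $\langle\pairpart,\mergepart\rangle=NC$; the inclusion $\subset$ follows from Tannaka--Krein once one knows that the quantum group cut out by the intertwiner relations for $T^{(N)}_{\pairpart}$ and $T^{(N)}_{\mergepart}$ is exactly $S_N^+$, which your entrywise translation of the relations confirms. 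Note, however, that your second half silently reuses the same combinatorial fact: the presentation of $\Olg(G')$ by those two generators alone is legitimate only after you know they generate all of $\RCat'$. You correctly isolate the statement $\langle\pairpart,\mergepart\rangle=NC$ as the only genuinely nontrivial step and defer it to \cite{BS09}, exactly as the paper does; a self-contained proof would have to supply that induction (every non-crossing partition is, after rotation, a tensor product and composition of blocks, and every block is built from $\mergepart$ and $\pairpart$). One cosmetic slip: the singleton is not a rotation of $\pairpart$ or $\mergepart$ but their composition, $T^{(N)}_{\singleton}=T^{(N)}_{\mergepart}T^{(N)}_{\pairpart}$, as the paper itself records; this is harmless since generation allows compositions.
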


Now let $X$ be a finite graph, so it has a finite set of vertices $V:=V(X)$. Let us number those vertices, so we can write $V=\{1,\dots,N\}$. The adjacency matrix of $X$ is then an $N\times N$ matrix $A_X$ with entries consisting of \emph{zeros} and \emph{ones} such that $[A_X]^j_i=1$ if and only if $(i,j)$ is an edge in $X$. If $X$ is undirected, then $A_X$ should be symmetric, otherwise it need not. If we have $[A_X]^i_i=1$, we say that $X$ has a loop at the vertex $i$. All concrete examples of graphs mentioned in this paper will be \emph{simple}, i.e.\ undirected and without loops, but the general considerations will hold also for the directed case with loops.

We say that a quantum group $G$ \emph{acts} on the graph $X$ through the coaction $\alpha\colon \delta_i\mapsto\sum_j\delta_j\otimes u^j_i$ if the coaction $\alpha$ commutes with the adjacency matrix, that is, $\alpha\circ A=(A\otimes\id)\circ\alpha$. Equivalently, this means the $uA=Au$. The quantum automorphism group of $X$ is defined to be the universal quantum group acting on $X$.

\begin{defn}[{\cite{Ban05}}]
Let $X$ be a graph on $N$ vertices. We define the \emph{quantum automorphism group} of $X$ to be the compact matrix quantum group $\Aut^+X=(\Olg(\Aut^+X),u)$ with
$$\Olg(\Aut^+X)=\staralg(u^i_j,\;i,j=1,\dots,N\mid (u^i_j)^2=u^i_j=u^{i*}_j,\;\sum_k u^i_k=1,\;Au=uA).$$
\end{defn}
Equivalently, it is determined by its representation category
$$\RCat_{\Aut^+X}=\langle T^{(N)}_{\pairpart},T^{(N)}_{\mergepart},A_X\rangle_N=\langle T_{\singleton}^{(N)},T_{\mergepart}^{(N)},A_X\rangle_N,$$
where $\singleton\in\Part(0,1)$ is the singleton partition. The equivalence of the two generating sets can be easily seen by noticing that $T^{(N)}_{\pairpart}=T^{(N)\,*}_{\mergepart}T^{(N)}_{\singleton}$ on one hand and $T^{(N)}_{\singleton}=T^{(N)}_{\mergepart}T^{(N)}_{\pairpart}$ on the other hand. See also \cite{Cha19}.

\subsection{The $n$-dimensional hypercube}
Consider a natural number $n\in\N$. The $n$-dimensional hypercube graph $Q_n$ is determined by the vertices and edges of an $n$-dimensional hypercube. It can be parametrized as follows.

The set of $N:=2^n$ vertices can be identified with the elements of the group $\Z_2^n$. We are going to denote these group elements by Greek letters $\alpha=(\alpha_1,\dots,\alpha_n)\in\Z_2^n$ with $\alpha_i\in\{0,1\}\simeq\Z_2$. We will denote the group operation as addition. We also denote the canonical generators by $\epsilon_i=(0,\dots,0,1,0,\dots,0)$.

Two vertices $\alpha,\beta\in V(Q_n)$ are connected by an edge if and only if they differ in exactly one of the $n$ indices, that is, if $\beta=\alpha+\epsilon_i$ for some $i\in\{1,\dots,n\}$. Equivalently, we can say that $Q_n$ is the Cayley graph of $\Z_2^n$ with respect to the generating set $\epsilon_1,\dots,\epsilon_n$. We can also express the edges via the adjacency matrix
$$[A_{Q_n}]^\alpha_\beta=
\begin{cases}
	1&\text{if $\beta=\alpha+\epsilon_i$ for some $i$,}\\
	0&\text{otherwise.}
\end{cases}$$

\begin{ex}[$n=3$]
We mention the example of the ordinary three-dimensional cube and its parametrization using triples of zero/one indices. Note that shifting the vertex by $\epsilon_i$, that is, flipping the $i$-th index, we move in the direction of the $i$-th dimension.
\def\vertex(#1){\fill (#1) circle (2pt)}
$$
\begin{tikzpicture}
\vertex (0,0,0) node [below left] {$\scriptstyle(0,0,0)$};
\vertex (1,0,0) node [below right] {$\scriptstyle(1,0,0)$};
\vertex (0,1,0) node [left] {$\scriptstyle(0,1,0)$};
\vertex (1,1,0) node [right] {$\scriptstyle(1,1,0)$};
\vertex (0,0,-1) node [left] {$\scriptstyle(0,0,1)$};
\vertex (1,0,-1) node [right] {$\scriptstyle(1,0,1)$};
\vertex (0,1,-1) node [above left] {$\scriptstyle(0,1,1)$};
\vertex (1,1,-1) node [above right] {$\scriptstyle(1,1,1)$};
\draw (0,0,0) -- (1,0,0) -- (1,1,0) -- (0,1,0) -- cycle;
\draw (0,0,-1) -- (1,0,-1) -- (1,1,-1) -- (0,1,-1) -- cycle;
\draw (0,0,0) -- (0,0,-1);
\draw (0,1,0) -- (0,1,-1);
\draw (1,0,0) -- (1,0,-1);
\draw (1,1,0) -- (1,1,-1);
\endtikzpicture
\qquad
\tikzpicture[->]
\draw (0,0) -- (1,0) node [right] {$\epsilon_1$};
\draw (0,0) -- (0,1) node [above] {$\epsilon_2$};
\draw (0,0,0) -- (0,0,-1) node [above right] {$\epsilon_3$};
\end{tikzpicture}
$$
Below, we show the corresponding adjacency matrix. Since it is not clear, how the elements of $\Z_2^n$ should be ordered, we also labelled the rows with the corresponding tuples (the columns are ordered the same way). We also indicated the division of the matrix into blocks with respect to the number of \emph{ones} in the tuple (essentially the distance from the $(0,0,0)$-vertex).
$$A_{Q_3}=\Lnmatrix{
0&1&1&1&0&0&0&0&(0,0,0)\cr\noalign{\hrule}
1&0&0&0&0&1&1&0&(1,0,0)\cr
1&0&0&0&1&0&1&0&(0,1,0)\cr
1&0&0&0&1&1&0&0&(0,0,1)\cr\noalign{\hrule}
0&0&1&1&0&0&0&1&(0,1,1)\cr
0&1&0&1&0&0&0&1&(1,0,1)\cr
0&1&1&0&0&0&0&1&(1,1,0)\cr\noalign{\hrule}
0&0&0&0&1&1&1&0&(1,1,1)\cr
}$$
\end{ex}

\subsection{Functions on the hypercube and Fourier transform}
We denote by $C(Q_n)$ the algebra of functions on the vertex set $V(Q_n)=\Z_2^n$. It has the canonical basis of $\delta$-functions $\delta_\alpha$ defined by $\delta_\alpha(\beta)=\delta_{\alpha\beta}$. 

We can also define the structure of a Hilbert space $l^2(Q_n)$ on the vector space of functions simply by putting $\langle f,g\rangle=\sum_\alpha \overline{f(\alpha)}g(\alpha)$. The basis $(\delta_\alpha)$ is then orthonormal with respect to this inner product.

There is another important basis on $l^2(Q_n)$ given by functions of the form
$$\tau_\mu=\tau_1^{\mu_1}\cdots \tau_n^{\mu_n},\qquad\hbox{where}\quad\mu=(\mu_1,\dots,\mu_n)\in\Z_2^n$$
and
$$\tau_i(\alpha)=(-1)^{\alpha_i},$$
so
$$\tau_\mu(\alpha)=(-1)^{\alpha_1\mu_1+\cdots+\alpha_n\mu_n}=(-1)^{\alpha\cdot\mu}.$$

First, note that the elements $\tau_\mu$ form a presentation of the group $\Z_2^n$ itself or, alternatively, of the group algebra $\C\Z_2^n$. That is, we have $\tau_\mu\tau_\nu=\tau_{\mu+\nu}$; indeed,
$$(\tau_\mu\tau_\nu)(\alpha)=(-1)^{\alpha\cdot\mu}(-1)^{\alpha\cdot\nu}=(-1)^{\alpha\cdot(\mu+\nu)}=\tau_{\mu+\nu}.$$

Secondly, note that the basis $(\tau_\mu)$ is orthogonal:
$$\langle\tau_\mu,\tau_\nu\rangle=\sum_\alpha (-1)^{\alpha\cdot(\mu+\nu)}=
\begin{cases}
	2^n&\text{if $\mu+\nu=0$, i.e. $\mu=\nu$,}\\
	0&\text{otherwise.}
\end{cases}$$

Let us explain more in detail the last equality, which will actually be useful also in subsequent computations.
\begin{lem}
For $\beta\in\Z_2^n$, it holds that
$$\sum_{\alpha\in\Z_2^n} (-1)^{\alpha\cdot\beta}=
\begin{cases}
	2^n&\text{if $\beta=0$,}\\
	0&\text{otherwise.}
\end{cases}$$
\end{lem}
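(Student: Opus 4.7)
The plan is to treat this as the standard orthogonality relation for the characters of $\Z_2^n$, which admits a one-line proof by factorizing the sum coordinate by coordinate. Writing $\alpha\cdot\beta = \alpha_1\beta_1 + \cdots + \alpha_n\beta_n$, the exponential splits as a product, so
$$\sum_{\alpha\in\Z_2^n}(-1)^{\alpha\cdot\beta} = \prod_{i=1}^n\Bigl(\sum_{\alpha_i\in\Z_2}(-1)^{\alpha_i\beta_i}\Bigr) = \prod_{i=1}^n\bigl(1+(-1)^{\beta_i}\bigr).$$
Then I would observe that each factor equals $2$ when $\beta_i=0$ and $0$ when $\beta_i=1$. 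Hence the product is $2^n$ precisely when every $\beta_i$ is zero (that is, $\beta=0$) and vanishes otherwise.

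An equally short alternative, which I would mention only if a more conceptual argument is preferred, is the pairing trick: if $\beta\neq 0$, pick an index $i$ with $\beta_i=1$, partition $\Z_2^n$ into the $2^{n-1}$ pairs $\{\alpha,\alpha+\epsilon_i\}$, and note that $(\alpha+\epsilon_i)\cdot\beta = \alpha\cdot\beta + 1 \pmod 2$, so the two contributions cancel. The case $\beta=0$ is then trivial since every summand is $1$.

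There is essentially no obstacle here; the only thing to double-check is that the sign $(-1)^{\alpha\cdot\beta}$ is well defined on $\Z_2^n$, which follows from the fact that $\alpha\cdot\beta$ is only ever used modulo $2$ in the exponent. I would therefore keep the written proof to one or two lines, favoring the factorization argument since it makes both branches of the case distinction visible simultaneously.
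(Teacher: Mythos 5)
Your proof is correct. Your primary argument — factorizing the sum over $\Z_2^n$ into a product of $n$ sums over $\Z_2$, each equal to $1+(-1)^{\beta_i}$ — is a genuinely different (and arguably cleaner) route than the one the paper takes: the paper uses exactly your ``alternative'' argument, namely the involution $\alpha\mapsto\alpha+\epsilon_i$ for an index $i$ with $\beta_i=1$, which flips the sign of $(-1)^{\alpha\cdot\beta}$ and forces cancellation, while the case $\beta=0$ is handled by noting all $2^n$ summands equal $1$. The factorization has the advantage of treating both cases uniformly and generalizing immediately to the character orthogonality for $\Z_{m_1}\times\cdots\times\Z_{m_n}$ (which is implicitly needed later in the paper, e.g.\ for the Hamming graphs); the pairing trick is more elementary and makes the cancellation mechanism vivid without invoking the product structure. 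Either is a complete proof, and your remark that $\alpha\cdot\beta$ only matters modulo $2$ is the right (and only) point of care.
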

\begin{proof}
If $\beta=0$, then $(-1)^{\alpha\cdot\beta}=(-1)^0=1$, so in the sum we are summing over $2^n$ \emph{ones}. Hence the result. If $\beta$ has some non-zero entry, say $\beta_i=1$, then for every $\alpha$, we can flip the $i$-th entry of $\alpha$ in order to flip the sign of $(-1)^{\alpha\cdot\beta}$. Thus, there is an equal amount of $+1$ as $-1$ in the sum, so they cancel out.
\end{proof}

We define the transformation matrix $\F\colon l^2(Q_n)\to l^2(Q_n)$ by $\F^\alpha_\mu=(-1)^{\alpha\cdot\mu}$ called the {\em Fourier transform}. It provides a transformation between the two bases: $\tau_\mu=\sum_\alpha\delta_\alpha \F^\alpha_\mu$. Thanks to the orthogonality property above, we have that $\F$ is, up to scaling, orthogonal. More precisely $\F^{-1}={1\over 2^n}\F^*$.

\begin{ex}[$n=3$]
Let us again look on the case $n=3$. The matrix $\F$ consists only of $\pm 1$ elements. For easier reading, we write only $+$ or $-$ in the matrix instead of $+1$ and $-1$. The order for the bases $(\delta_\alpha)$ and $(\tau_\mu)$ or, better to say, the order of the tuples $\alpha\in\Z_2^3$ are again indicated behind the matrix.

$$\F=\Lnmatrix{
+&+&+&+&+&+&+&+&(0,0,0)\cr\noalign{\hrule}
+&-&+&+&+&-&-&-&(1,0,0)\cr
+&+&-&+&-&+&-&-&(0,1,0)\cr
+&+&+&-&-&-&+&-&(0,0,1)\cr\noalign{\hrule}
+&+&-&-&+&-&-&+&(0,1,1)\cr
+&-&+&-&-&+&-&+&(1,0,1)\cr
+&-&-&+&-&-&+&+&(1,1,0)\cr\noalign{\hrule}
+&-&-&-&+&+&+&-&(1,1,1)\cr
}$$
\end{ex}

\subsection{Applying Fourier transform to the intertwiners}
\label{secc.fadj}

Recall that the quantum automorphism group of the hypercube $\Aut^+ Q_n$ should be the quantum subgroup of $S_N^+$ with respect to the intertwiner relation $uA=Au$, where we denote for short $A:=A_{Q_n}$. Equivalently, it is the quantum subgroup of $O_N^+$ with respect to relations $uT^{(N)}_{\mergepart}=T^{(N)}_{\mergepart}(u\otimes u)$ and $uA=Au$.

On the first sight, it is not clear, which quantum group these relations define. In order to see this, we first apply the Fourier transform to the intertwiners. (Recall that $\F$ is up to scaling orthogonal, so $O_N^+$ is invariant under $\F$.) Let us look on an example first.

\begin{ex}[$n=3$]
The most important intertwiner seems to be the adjacency matrix of the graph as only this carries the data of the graph itself. A straightforward computation gives us a diagonal matrix
$$\hat A:=\F^{-1}A\F=\Lnmatrix{
3& & & & & & & &(0,0,0)\cr\noalign{\hrule}
 &1& & & & & & &(1,0,0)\cr
 & &1& & & & & &(0,1,0)\cr
 & & &1& & & & &(0,0,1)\cr\noalign{\hrule}
 & & & &-1& & & &(0,1,1)\cr
 & & & & &-1& & &(1,0,1)\cr
 & & & & & &-1& &(1,1,0)\cr\noalign{\hrule}
 & & & & & & &-3&(1,1,1)\cr
}$$
\end{ex}

Writing some explicit matrices for $T^{(N)}_{\mergepart}$ would be slightly complicated, so we will get back to this tensor later. So, let us now do the computations for general $n$. For the adjacency matrix, we have
\begin{align*}
[\F^{-1}A\F]^\mu_\nu&={1\over 2^n}\sum_{\alpha,\beta}(-1)^{\alpha\cdot\mu}A^\alpha_\beta(-1)^{\beta\cdot\nu}={1\over 2^n}\sum_\alpha\sum_{i=1}^n(-1)^{\alpha\cdot\mu}(-1)^{(\alpha+\epsilon_i)\cdot\nu}\\
&={1\over 2^n}\left(\sum_\alpha (-1)^{\alpha\cdot(\mu+\nu)}\right)\left(\sum_{i=1}^n(-1)^{\nu_i}\right)=\delta_{\mu\nu}(n-2\deg\nu),
\end{align*}
where $\deg\nu$ is the number of \emph{ones} in $\nu$, which we will subsequently call the {\em degree} of $\nu$. So, we can say that the Fourier image of the adjacency matrix is a direct sum of identities with some scalar factors
\begin{equation}
\label{eq.hatA}
\hat A:=\F^{-1}A\F=\bigoplus_{k=0}^n(n-2k)I_{n\choose k}
\end{equation}

Imposing $\hat A$ to be an intertwiner is equivalent to saying that every subspace of $l^2(Q_n)$ consisting of elements with a given degree $d$ (w.r.t.\ the basis $(\tau_\mu)$) is invariant. So, let us denote the invariant subspaces by
$$V_k:=\{f\in l^2(Q_n)\mid \deg f=k\}.$$
Note that $\dim V_k={n\choose k}$. Note also that those spaces do not define a grading, but only a filtration on the algebra $C(Q_n)$.


So, denote by $\hat u:=\F^{-1}u\F$ the Fourier transform of the fundamental representation of the quantum automorphism group of the hypercube and by $\hat u=\hat u^{(0)}\oplus\hat u^{(1)}\oplus\cdots\oplus\hat u^{(n)}$ the decomposition according to the invariant subspaces $V_k$.

This was only how the adjacency matrix $A$ transform under $\F$. But we also need to transform the intertwiners defining the free symmetric quantum group $S_N^+$. Let us consider the tensor $T^{(N)}_{\connecterpart}\colon \C^N\otimes\C^N\to\C^N\otimes\C^n$ defined by $[T^{(N)}_{\connecterpart}]^{\alpha\beta}_{\gamma\delta}=\delta_{\alpha\beta\gamma\delta}$. (This is indeed an intertwiner of $S_N^+$ since $T^{(N)}_{\connecterpart}=T_{\mergepart}^{(N)\,*}T^{(N)}_{\mergepart}$.)

Thanks to the fact that $u$ must be block diagonal (as we just derived), we can study such intertwiners restricted to such blocks. So, denote by $P_k$ the (Fourier transform composed with the) orthogonal projection $l^2(Q_n)\to V_k$ and define the tensor $\hat T^{(N)}_{\connecterone}:=(P\otimes P)^{-1}T^{(N)}_{\connecterpart}(P\otimes P)$, which must be an intertwiner of $\hat u^{(1)}$. We can compute its entries:

\begin{align*}
[\hat T^{(N)}_{\connecterone}]^{ij}_{kl}
&=[(\F\otimes \F)^{-1}T^{(N)}_{\connecterpart}(\F\otimes \F)]^{\epsilon_i,\epsilon_j}_{\epsilon_k,\epsilon_l}
={1\over 2^{2n}}\sum_{\alpha,\beta,\gamma,\delta}(-1)^{\alpha_i+\beta_j+\gamma_k+\delta_l}\delta_{\alpha,\beta,\gamma,\delta}\\
&={1\over 2^{2n}}\sum_{\alpha}(-1)^{\alpha_i+\alpha_j+\alpha_k+\alpha_l}=
\begin{cases}
	1/2^n&\raise.5ex\vtop{\hsize=11em\parindent=0pt\raggedright if in the tuple $(i,j,k,l)$, two and two indices are the same\vrule width0pt depth 10pt}\\
	0&\text{otherwise}
\end{cases}
\end{align*}

One can write down this result also using the partition notation as
$$2^n\,\hat T^{(N)}_{\connecterone}=T^{(n)}_{\Paabb}+T^{(n)}_{\Pabba}+T^{(n)}_{\Pabab}-2T^{(n)}_{\Paaaa}$$

This is well known to be the intertwiner defining the quantum group $O_n^{-1}$. (See e.g.~\cite[Section~7]{GWgen}). Now, we may already see, where is this heading towards.

As a side remark, note that one might also want to directly compute the projection of $T^{(N)}_{\mergepart}$ to the subspace $V_1$. This is of course possible by doing a very similar computation. However, this intertwiner turns out to be equal to zero (since one can never ``pair'' a triple of indices).

\subsection{Quantum automorphism group of the hypercube}

In this section, we prove that the quantum automorphism group of the hypercube graph is the $O_n^{-1}$ -- a result originally obtained in \cite[Theorem~4.2]{BBC07}.

\begin{thm}
\label{T.hypercube}
The quantum automorphism group of the $n$-dimensional hypercube graph is the anticommutative orthogonal quantum group $O_n^{-1}$ with the representation
$$\F\big(1\oplus v\oplus(v\brewedge v)\oplus\cdots\oplus v^{\brewedge n}\big)\F^{-1},$$
where $v$ is the standard fundamental representation of $O_n^{-1}$.
\end{thm}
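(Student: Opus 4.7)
The plan is to exploit the Fourier decomposition developed in Section~\ref{secc.fadj} to identify the fundamental representation of $\Aut^+Q_n$, after conjugation by $\F$, with the direct sum of anticommutative exterior powers of a single $n$-dimensional representation $v$; this $v$ will turn out to be the fundamental representation of $O_n^{-1}$. Establishing the theorem amounts to two directions: deriving the defining relations of $O_n^{-1}$ from the intertwiners of $\Aut^+Q_n$, and conversely verifying that $O_n^{-1}$ equipped with the claimed representation genuinely acts on $Q_n$.

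For the first direction, equation~\eqref{eq.hatA} forces $\hat u=\F^{-1}u\F$ to decompose as $\hat u^{(0)}\oplus\hat u^{(1)}\oplus\cdots\oplus\hat u^{(n)}$ with the blocks acting on the subspaces $V_k$ of dimension $\binom{n}{k}$. The magic unitary condition $\sum_k u^i_k=1$ implies that $\tau_0$ is fixed, whence $\hat u^{(0)}=1$. Setting $v:=\hat u^{(1)}$, the relations $v=\bar v$ and $vv^t=v^tv=I$ are inherited from $u$ via the fact that $\F$ is (up to scale) real, symmetric and orthogonal. The computation of $\hat T_{\connecterone}^{(N)}$ at the end of Section~\ref{secc.fadj} shows that projecting $T_{\connecterpart}^{(N)}=T_{\mergepart}^{(N)\,*}T^{(N)}_{\mergepart}$ onto $V_1\otimes V_1$ reproduces exactly the generator of the defining intertwiner of $O_n^{-1}$, so $v$ has anticommutative entries. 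To identify the higher blocks, the key observation is that pointwise multiplication on $C(Q_n)$ is an intertwiner of the coaction (because the coaction is a $*$-homomorphism) and in the Fourier basis acts by $\tau_\mu\otimes\tau_\nu\mapsto\tau_{\mu+\nu}$. Iterating this and projecting onto $V_k$ realises $V_k$ as the quotient of $V_1^{\otimes k}$ in which the diagonal $\tau_{\epsilon_i}\otimes\tau_{\epsilon_i}$ is killed and the remaining coordinates are symmetrized. Comparing with the construction of $V^{\brewedge k}$ in Section~\ref{sec.prelim}, this quotient coincides with $V_1^{\brewedge k}$, so $\hat u^{(k)}=v^{\brewedge k}$ under the natural identification $\tau_{\epsilon_{i_1}+\cdots+\epsilon_{i_k}}\leftrightarrow\tau_{\epsilon_{i_1}}\brewedge\cdots\brewedge\tau_{\epsilon_{i_k}}$ for $i_1<\cdots<i_k$.

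For the converse direction I would consider the representation $w:=1\oplus v\oplus v^{\brewedge 2}\oplus\cdots\oplus v^{\brewedge n}$ of $O_n^{-1}$, of total dimension $\sum_k\binom{n}{k}=2^n=N$, and set $u:=\F w\F^{-1}$. The intertwining condition $uA=Au$ is automatic because $w$ commutes with the block-diagonal $\hat A$ by construction. The magic unitary conditions $(u^i_j)^2=u^i_j=u^{i*}_j$ and $\sum_k u^i_k=1$ translate, via the Fourier transform, into the statement that $w$ defines a coaction on $C(Q_n)\simeq\C\Z_2^n$ compatible with the product $\tau_\mu\tau_\nu=\tau_{\mu+\nu}$, which holds by the proposition on anticommutative exterior products in Section~\ref{sec.prelim} applied to the anticommutative entries of $v$. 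Combining both directions then yields $\Aut^+Q_n=O_n^{-1}$ with the stated representation. The most delicate ingredient is the compatibility between the commutative multiplication on $C(\Z_2^n)$ and the anticommutative exterior algebra of $v$; this compatibility is made possible by the signs in the Fourier transform, which effectively implement the $2$-cocycle twist relating the two algebras.
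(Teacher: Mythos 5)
Your proposal is correct and follows essentially the same route as the paper: diagonalize the adjacency matrix via the Fourier transform to get the block decomposition of $\hat u$, extract the $O_n^{-1}$ relations from the projected intertwiner $\hat T^{(N)}_{\connecterone}$, use the fact that $V_1$ generates $C(Q_n)$ to get faithfulness of $\hat u^{(1)}$, and for the converse build the coaction of $O_n^{-1}$ on $C(\Z_2^n)$. The one caveat is that the converse does not literally follow from the proposition on anticommutative exterior powers (which only gives invariance of $(\C^n)^{\brewedge k}$ under $v^{\otimes k}$); one must verify directly, as the paper does in Lemma~\ref{L.Qncoact}, that $\tau_i\mapsto\sum_j\tau_j\otimes v^j_i$ respects the relations $\tau_i^2=1$ and $\tau_i\tau_j=\tau_j\tau_i$ — which is exactly the sign-cancellation mechanism you identify in your closing remark.
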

\begin{proof}
Denote by $u$ the fundamental representation of the quantum automorphism group of the hypercube $\Aut^+Q_n$. Denote by $\hat u:=\F^{-1}u\F$ the Fourier transform of $u$. We prove the theorem by a series of lemmata. We start with results derived in the previous section.

\begin{lem}
The representation $\hat u$ decomposes as $\hat u=\hat u^{(0)}\oplus\hat u^{(1)}\oplus\cdots\oplus\hat u^{(n)}$.
\end{lem}
\begin{proof}
Follows from the form of the Fourier transform of the adjacency matrix \eqref{eq.hatA}.
\end{proof}

\begin{lem}
The subrepresentation $\hat u^{(1)}$ satisfies the defining relation for $O_n^{-1}$.
\end{lem}
\begin{proof}
Follows from $\hat T^{(N)}_{\connecterone}$ being an intertwiner of $\hat u^{(1)}$.
\end{proof}

\begin{lem}
The subrepresentation $\hat u^{(1)}$ is a faithful representation of $\Aut^+Q_n$.
\end{lem}
\begin{proof}
The representation $\hat u$ acts on $Q_n$ through the coaction $\tau_\mu\mapsto\sum_\nu\tau_\nu\otimes u^\nu_\mu$. Since the algebra $C(Q_n)$ is generated by the invariant subspace $V_1=\spanlin\{\tau_i\}_{i=1}^n$, this coaction is uniquely determined by the coaction of $\hat u^{(1)}$ on this invariant subspace. In particular, the entries of $\hat u$ must be generated by the entries of $\hat u^{(1)}$.
\end{proof}

Consequently, $\Aut^+Q_n$ is a quantum subgroup of $O_n^{-1}$. Now it remains to prove the opposite inclusion.

\begin{lem}
\label{L.Qncoact}
The mapping $\tau_i\mapsto\sum_{j=1}^n\tau_j\otimes v^j_i$ extends to a $*$-homomorphism $\alpha\colon C(Q_n)\to C(Q_n)\otimes \Olg(O_n^{-1})$. The subspaces $V_l$ are invariant subspaces of this action for every $l=0,\dots,n$.
\end{lem}
\begin{proof}
We have to check that the images of the generators $\tau_i$ satisfy the generating relations. That is:
$$\alpha(\tau_i)^*=\alpha(\tau_i),\qquad\alpha(\tau_i)^2=1,\qquad\alpha(\tau_i)\alpha(\tau_j)=\alpha(\tau_j)\alpha(\tau_i).$$
The first one is obvious as both $\tau_i$ and $v^j_i$ are self-adjoint. For the second, we have $\alpha(\tau_i)^2=\sum_{j,k}\tau_j\tau_k\otimes u^j_iu^k_i$. While $\tau_j$ commutes with $\tau_k$, we have that $u^j_i$ anticommutes with $u^k_i$ unless $j=k$. So, the entries for $j\neq k$ subtract to zero and we are left with $\sum_j \tau_j\tau_j\otimes u^j_iu^j_i=1$. Finally, the last one (assuming $i\neq j$) is again easy as we have $\alpha(\tau_i)\alpha(\tau_j)=\sum_{k,l}\tau_k\tau_l\otimes u^k_iu^l_j$, where everything commutes (unless $k=l$, but for those summands we have $\sum_k \tau_k\tau_k\otimes u^k_iu^k_j=0$).

It remains to show that $V_l$ are invariant subspaces. Take arbitrary element $\tau_{i_1}\cdots\tau_{i_l}\in V_l$ (assuming $i_1<\cdots<i_l$) and write explicitly
\begin{equation}\label{eq.Onact}
\alpha(\tau_{i_1}\cdots\tau_{i_l})=\sum_{j_1,\dots,j_l=1}^n\tau_{j_1}\cdots\tau_{j_l}\otimes u^{i_1}_{j_1}\cdots u^{i_l}_{j_l}.
\end{equation}
In the cases, where $j_a=j_b$ for some $a,b$, the contribution of the sum must equal to zero since the corresponding $\tau_{j_a}$ and $\tau_{j_b}$ commute, while $u^{i_a}_{j_a}$ and $u^{i_b}_{j_b}$ anticommute. So, we are actually summing over elements $\tau_{j_1}\cdots\tau_{j_l}\in V_l$ only, which is what we wanted to prove.
\end{proof}

This means that $O_n^{-1}$ is a quantum subgroup of $\Aut^+Q_n$, which finishes the proof that $\Aut^+Q_n\simeq O_n^{-1}$. Finally, from the explicit expression \eqref{eq.Onact}, it is clear that $O_n^{-1}$ acts on the invariant subspaces $V_l$ indeed through the representations $\hat u^{(l)}=v^{\brewedge l}$.
\end{proof}

\section{Cayley graphs of abelian groups: General picture}
\label{sec.Cayley}

The fact that the Fourier transform diagonalizes the adjacency matrix of the hypercube graph is not a coincidence. In the graph theory, it is a well known fact, which holds for any Cayley graph of an abelian group. (See \cite{LZ19} for a nice survey on the spectral theory of Cayley graphs.)

Let $\Gamma$ be a finite abelian group. We denote by $\Irr\Gamma\subset C(\Gamma)$ the set of all irreducible characters (that is, one-dimensional representations; since $\Gamma$ is abelian, all irreducible representations are in fact one-dimensional). Note that $\Irr\Gamma$ forms a basis of $C(\Gamma)$ and expressing a function in this basis is exactly the \emph{Fourier transform} on $\Gamma$.

Let $S\subset\Gamma$ be a set of generators of $\Gamma$. As in the last section, we are going to denote the elements of $\Gamma$ by Greek letters and the operation on $\Gamma$ as addition. The \emph{Cayley graph} of the group $\Gamma$ with respect to the generating set $S$ denoted by $\Cay(\Gamma,S)$ is a directed graph defined on the vertex set $\Gamma$ with an edge $(\alpha,\beta)$ for every pair of elements such that $\beta=\alpha+\theta$ for some $\theta\in S$. If $S$ is closed under the group inversion, then the Cayley graph is actually undirected (for every edge, one also has the opposite one). So, the adjacency matrix of $\Cay(\Gamma,S)$ is of the form
$$[A]^\beta_\alpha=
\begin{cases}
1&\text{if $\beta=\alpha+\theta$ for some $\theta\in S$,}\\
0&\text{otherwise.}
\end{cases}
$$

\begin{prop}[\cite{Lov75,Bab79}]
\label{P.lambda}
Let $\Gamma$ be a finite abelian group and $S$ its generating set. Denote by $A$ the adjacency matrix associated to the Cayley graph $\Cay(\Gamma,S)$. Then $\Irr\Gamma$ forms the eigenbasis of $A$. Given $\chi\in\Irr\Gamma$, its eigenvalue is given by
\begin{equation}
\label{eq.lambda}
\lambda_\chi=\sum_{\theta\in S}\chi(-\theta).
\end{equation}
\end{prop}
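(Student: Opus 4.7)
The plan is to verify directly that each irreducible character is an eigenvector of $A$ with the claimed eigenvalue, and then invoke the standard fact that the characters form a basis of $C(\Gamma)$. The underlying conceptual reason is that $A$ is a convolution operator on the abelian group $\Gamma$ (translation by any $\theta\in S$ commutes with all group translations), and convolution operators on abelian groups are diagonalized by characters, with eigenvalues given by the Fourier transform of the convolution kernel. So the computation should be completely mechanical.

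Concretely, I would fix $\chi\in\Irr\Gamma$, regard it as a vector in $C(\Gamma)\cong\C^{|\Gamma|}$ with entries $\chi(\alpha)$, and compute $(A\chi)(\beta)=\sum_\alpha A^\beta_\alpha\,\chi(\alpha)$. By the definition of $A$, the sum runs exactly over those $\alpha$ for which $\beta=\alpha+\theta$ with $\theta\in S$, i.e.\ $\alpha=\beta-\theta$. Hence
\begin{equation*}
(A\chi)(\beta)=\sum_{\theta\in S}\chi(\beta-\theta)=\chi(\beta)\sum_{\theta\in S}\chi(-\theta),
\end{equation*}
where the second equality uses that $\chi$ is a group homomorphism to $\C^\times$. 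Thus $A\chi=\lambda_\chi\chi$ with $\lambda_\chi=\sum_{\theta\in S}\chi(-\theta)$, which is formula~\eqref{eq.lambda}.

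It remains to note that $\Irr\Gamma$ is a basis of $C(\Gamma)$: this is already asserted in the paragraph preceding the proposition and follows from standard representation theory of finite abelian groups (the characters are pairwise orthogonal with respect to the normalized inner product on $C(\Gamma)$, and there are $|\Gamma|$ of them). Since we have exhibited $|\Gamma|$ linearly independent eigenvectors of $A$, they form an eigenbasis, which completes the proof.

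I do not anticipate a real obstacle: the content is essentially the multiplicativity $\chi(\beta-\theta)=\chi(\beta)\chi(-\theta)$ together with a bookkeeping step about the indexing convention of $A$. The only mildly delicate point is making sure the upper/lower index convention for $A$ matches the convention used when forming the matrix-vector product $A\chi$, which I would state explicitly at the outset to avoid any sign or transpose confusion.
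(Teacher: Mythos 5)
Your proof is correct and follows essentially the same route as the paper: a direct computation showing $(A\chi)(\beta)=\sum_{\theta\in S}\chi(\beta-\theta)=\lambda_\chi\chi(\beta)$ via multiplicativity of $\chi$, combined with the standard fact that the $|\Gamma|$ characters form a basis of $C(\Gamma)$. Your explicit attention to the index convention of $A$ matches the paper's computation exactly.
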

\begin{proof}
This is just a straightforward check. Take any $\chi\in\Irr\Gamma$ then we have
\[[A\chi]^\alpha=\sum_{\beta\in\Gamma}A^\alpha_\beta\chi(\beta)=\sum_{\theta\in S}\chi(\alpha-\theta)=\sum_{\theta\in S}\chi(-\theta)\chi(\alpha)=\lambda_\chi\chi(\alpha).\qedhere\]
\end{proof}

This result suggests the strategy for determining the quantum automorphism group for any Cayley graph corresponding to an abelian group: express everything in the basis of irreducible characters. Since this diagonalizes the adjacency matrix, the meaning of it as an intertwiner becomes obvious. On the other hand, it might be slightly more complicated to discover the meaning of the intertwiner $T^{(N)}_{\mergepart}$.

\begin{alg}[Determining $\Aut^+\Cay(\Gamma,S)$]
\label{Algorithm}
Let $\Gamma$ be an abelian group and $S$ its generating set. We are trying to determine the quantum automorphism group $\Aut^+\Cay(\Gamma,S)$ with fundamental representation $u$. In order to do so, we perform the following steps:
\begin{enumerate}
\item Determine the irreducible characters of $\Gamma$. Suppose that $\Gamma=\Z_{m_1}\times\cdots\times\Z_{m_n}$. Then $\Irr\Gamma=\{\tau_\mu\}_{\mu\in\Gamma}$ with $\tau_\mu(\alpha)=\prod_{i=1}^n\gamma_i^{\alpha_i\mu_i}$, where $\gamma_i$ is some primitive $m_i$-th root of unity for every $i$.
\item Determine the spectrum of $A_\Gamma$ using Equation~\eqref{eq.lambda}.
\item Denoting by $\lambda_0>\lambda_1>\dots>\lambda_d$ the mutually distinct eigenvalues of $A_\Gamma$, determine the corresponding eigenspaces $V_0,V_1,\dots,V_d$. (Note that we will always have $V_0=\spanlin\{\tau_0\}$, where 0 is the group identity.)
\item The eigenspaces are invariant subspaces of $u$. To formulate it slightly differently: We may define the \emph{Fourier transform} $\F$ as the matrix corresponding to the change of basis $(\delta_\alpha)\mapsto(\tau_\mu)$, that is $\F^\alpha_\mu=\tau_\mu(\alpha)$. Then we can define $\hat u=\F^{-1}u\F$, which decomposes as a direct sum $\hat u=\hat u^{(0)}\oplus\hat u^{(1)}\oplus\cdots\oplus\hat u^{(d)}$.
\item Choose some of the spaces and define $W:=V_{i_1}\oplus V_{i_2}\oplus\cdots$ in such a way that $W$ generates $C(\Gamma)$ as an algebra. (In our examples below, it will be enough to take $W:=V_1$, but it does not always have to be like this.) This means that $v:=\hat u^{(i_1)}\oplus\hat u^{(i_2)}\oplus\cdots$ is a faithful representation of $\Aut^+\Cay(\Gamma,S)$. (Since the coaction of $u$ or $\hat u$ restricted to $W$ must then again uniquely extend to the whole space $C(X)$ and hence we can recover the whole $u$ this way.)
\item Any non-crossing partition $p\in NC(k,l)$ defines an intertwiner $T_p^{(N)}\in\Mor(u^{\otimes k},u^{\otimes l})$, where $N=|\Gamma|$. We define $\hat T_p^{(N)}:=\F^{-1\,\otimes l}T_p^{(N)}\F^{\otimes k}$, which has to be an intertwiner $\hat T_p^{(N)}\in\Mor(\hat u^{\otimes k},\hat u^{\otimes l})$. It is actually enough to study the intertwiners corresponding to the block partitions $b_{k,l}\in\Part(k,l)$ -- partitions, where all the $k$ upper and $l$ lower points are in a single block, so $[T_{b_{k,l}}^{(N)}]^{\beta_1,\dots,\beta_l}_{\alpha_1,\dots,\alpha_k}=\delta_{\alpha_1,\dots,\alpha_k,\beta_1,\dots,\beta_l}$. The entries of the Fourier transformed intertwiner are easily computed as 
\begin{equation}\label{eq.That}
[\hat T_{b_{k,l}}]^{\nu_1,\dots,\nu_l}_{\mu_1,\dots,\mu_k}=N^{1-l}\delta_{\mu_1+\cdots+\mu_k,\nu_1+\cdots+\nu_l}.
\end{equation}
In particular, we can focus on the subspace $W$ and study the relations $\hat T^{(W)}_pv^{\otimes k}=v^{\otimes l}\hat T^{(W)}_p$, where $\hat T^{(W)}_p=U^{\otimes l}\hat T_p^{(N)}U^{*\otimes k}$, where $U$ is the coisometry $V_0\oplus\cdots\oplus V_d\to W$.
\end{enumerate}
\end{alg}

\section{Halved hypercube}
\label{sec.halfQ}

The hypercube graph is bipartite and hence we can create a new graph of it by the procedure of \emph{halving} -- taking one of the two components of the associated \emph{distance-two-graph}. Taking a natural number $n\in\N$, we define the $(n+1)$-dimensional \emph{halved hypercube graph} $\frac{1}{2}Q_{n+1}$ obtained by halving the ordinary hypercube $Q_{n+1}$. That is, we take all the even vertices in $Q_{n+1}$ (equivalently, all odd vertices) and connect by an edge every pair of vertices that were in the distance two in the original hypercube $Q_{n+1}$.

There is also a simpler definition of $\frac{1}{2}Q_{n+1}$. Take the $n$-dimensional hypercube $Q_n$ and add an additional edge for every pair of vertices in distance two. This is also known as \emph{squaring} the graph. It holds that $Q_n^2=\frac{1}{2}Q_{n+1}$. Using this description, we can write the adjacency matrix as follows
$$[A]^\alpha_\beta=
\begin{cases}
1&\text{if $\beta=\alpha+\epsilon_i$ for some $i$}\\
1&\text{$\beta=\alpha+\epsilon_i+\epsilon_j$ for some $i\neq j$}\\
0&\text{otherwise}
\end{cases}$$

Consequently, we see that $\frac{1}{2}Q_{n+1}=Q_n^2$ is a Cayley graph corresponding to the group $\Gamma=\Z_2^n$ with respect to the generating set $S=\{\epsilon_i\}_{i=1}^n\cup\{\epsilon_i+\epsilon_j\}_{i<j}$. In particular, the number of vertices is $N:=2^n$.

Now we would like to determine the quantum automorphism group of the halved hypercube graph $\frac{1}{2}Q_{n+1}$. Let us first summarize some known results. For $n\le 2$, the graph is actually the full graph on $N=2^n$ vertices, so the quantum automorphism group is the free symmetric quantum group $S_N^+$ (for $n=0,1$, i.e.\ $N=1,2$, this actually coincides with the classical one $S_N$). For $n=3$, the graph is the complement of the graph consisting of four isolated segments. Hence, its quantum automorphism group is the free hyperoctahedral quantum group $H_4^+=\Z_2\wr_*S_4^+$. (Here $\wr_*$ denotes the \emph{free wreath product}, which describes the quantum automorphism group of $n$ copies of a given graph \cite{Bic04}.)

So, the question is what is the quantum automorphism group of $\frac{1}{2}Q_{n+1}$ for $n>3$. The classical automorphism group is known to be $\Z_2^n\rtimes S_{n+1}$. More precisely, it is the index two subgroup of the hyperoctahedral group $H_{n+1}=\Z_2\wr S_{n+1}=\Z_2^{n+1}\rtimes S_{n+1}$ (the symmetry group of the hypercube) imposing that the product of all the $\Z_2$-signs is equal to one. This group is also known as the Coxeter group of type $D$. Since the quantum automorphism group of $Q_{n+1}$ is $O_{n+1}^{-1}$, we may expect that the answer for the halved hypercube $\frac{1}{2}Q_{n+1}$ should be the anticommutative special orthogonal group $SO_{n+1}^{-1}$.

\subsection{Determining $\Aut^+\frac{1}{2}Q_{n+1}$}
We follow Algorithm~\ref{Algorithm}. We start by computing the spectrum using Equation \eqref{eq.lambda}:
$$\lambda_\mu=\sum_{i=1}^n\tau_\mu(\epsilon_i)+\sum_{i<j}\tau_\mu(\epsilon_i+\epsilon_j)=\sum_i(-1)^{\mu_i}+\sum_{i<j}(-1)^{\mu_i+\mu_j}=l_\mu+\frac{1}{2}l_\mu^2-\frac{n}{2},$$
where $l_\mu=\sum_i(-1)^{\mu_i}=n-2\deg\mu$. Note that the eigenvalue depends again only on the degree of $\mu$ (the number of non-zero entries). So, denote $\lambda_d:=\lambda_\mu$ for $d=\deg\mu$. So, $\lambda_d=\frac{1}{2}(l_d^2+2l_d-n)$ with $l_d=n-2d$. After some computation, one can find out that
$$\lambda_d=\frac{1}{2}\left((2d-n-1)^2-n-1\right)$$
In contrast with the computation for the hypercube $Q_n$, the eigenvalues $\lambda_0,\dots,\lambda_n$ are not distinct. Instead, $\lambda_d=\lambda_{n+1-d}$. Consequently, the matrix $\hat A$ as an intertwiner does not imply that the subspaces $V_i:=\spanlin\{\tau_\mu\mid\deg\mu=i\}$ are invariant. Instead, we have the following invariant subspaces: $\tilde V_0:=V_0$, $\tilde V_1:=V_1+ V_n$, $\tilde V_2:=V_2+ V_{n+1}$ and so on. In general, $\tilde V_i=V_i+ V_{n+1-i}$ is an invariant subspace for every $i=0,\dots,\lfloor\frac{n+1}{2}\rfloor$ (using the convention $V_{n+1}=\{0\}$).

In order to describe the invariant spaces, define $\tau_{n+1}:=\tau_1\cdots\tau_n$. Then $\tau_1,\dots,\tau_n$, $\tau_{n+1}$ is the basis of $\tilde V_1$. The basis of each $\tilde V_i$ is exactly the set $\{\tau_\alpha\}$ with $\alpha\in\Z_2^{n+1}$, $\deg\alpha=i$. Denote by $u$ the fundamental representation of $\Aut^+\frac{1}{2}Q_{n+1}$ and by $\hat u^{(i)}$ the block of $\hat u=\F^{-1}u\F$ corresponding to the invariant subspace $\tilde V_i$.

Obviously, $\tau_1,\dots,\tau_{n+1}$ are also generators of $C(\frac{1}{2}Q_{n+1})=C(\Z_2^{n})$ (since already $\tau_1,\dots,\tau_n$ are generators) and we can write the algebra by generators and relations as
$$C({\textstyle\frac{1}{2}}Q_{n+1})=C^*(\tau_1,\dots,\tau_{n+1}\mid \tau_i^2=1,\;\tau_i\tau_j=\tau_j\tau_i,\;\tau_i^*=\tau_i,\;\tau_1\cdots\tau_{n+1}=1).$$

\begin{thm}
\label{T.demicube}
Consider $n\in\N\setminus\{1,3\}$. The quantum automorphism group of the halved hypercube graph $\frac{1}{2}Q_{n+1}$ is isomorphic to the anticommutative special orthogonal group $SO_{n+1}^{-1}$. It acts through the fundamental representation $u$  with $\hat u^{(i)}=v^{\brewedge i}$, where $v$ is the fundamental representation of $SO_{n+1}^{-1}$.
\end{thm}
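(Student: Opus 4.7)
The plan is to apply Algorithm~\ref{Algorithm} in the spirit of the hypercube proof (Theorem~\ref{T.hypercube}), with the crucial modification that the faithful generating subspace is the $(n+1)$-dimensional $\tilde V_1 = V_1 + V_n$, spanned by $\tau_1,\dots,\tau_n$ together with $\tau_{n+1}:=\tau_1\cdots\tau_n$. Denote by $u$ the fundamental representation of $\Aut^+\frac{1}{2}Q_{n+1}$ and set $v:=\hat u^{(1)}$. Since $\tau_1,\dots,\tau_{n+1}$ generate $C(\frac{1}{2}Q_{n+1})$, the subrepresentation $v$ is faithful, so the task reduces to showing that $v$ satisfies precisely the defining relations of $SO_{n+1}^{-1}$.

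The first substantive step is to restrict the Fourier-transformed block intertwiner coming from Equation~\eqref{eq.That} with $k=l=2$ to $\tilde V_1\otimes\tilde V_1$. Indexing the basis of $\tilde V_1$ by $e_1,\dots,e_{n+1}\in\Z_2^n$, where $e_i=\epsilon_i$ for $i\le n$ and $e_{n+1}=\epsilon_1+\cdots+\epsilon_n$, the entries of the restricted intertwiner are nonzero precisely when $e_i+e_j=e_k+e_l$ in $\Z_2^n$. A case analysis on the support sizes of the sums (which are $0$, $2$ or $n-1$) shows that, for $n\notin\{1,3\}$, the only solutions are the three ``two-and-two'' pairings $\{i,j\}=\{k,l\}$ as multisets, $(i,j)=(k,l)$, or $(i,j)=(l,k)$ on $\{1,\dots,n+1\}$. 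The exclusion of $n=3$ arises because $e_i+e_{n+1}$ has support of size $n-1$ in $\Z_2^n$, which spuriously matches the size-$2$ support of $e_k+e_l$ exactly when $n-1=2$; the case $n=1$ is degenerate since $e_2=e_1$ collapses $\tilde V_1$ to one dimension. After subtracting the diagonal overcounting exactly as in Section~\ref{secc.fadj}, one obtains the intertwiner defining $O_{n+1}^{-1}$ on $\C^{n+1}$, so $v$ satisfies the relations of $O_{n+1}^{-1}$.

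To upgrade to $SO_{n+1}^{-1}$, the plan is to convert the algebra relation $\tau_1\cdots\tau_{n+1}=1$ into the intertwiner relation $\adet v=1$. Applying the candidate coaction $\tau_i\mapsto\sum_j\tau_j\otimes v^j_i$ to the product $\tau_1\cdots\tau_{n+1}$ yields
$$\sum_{j_1,\dots,j_{n+1}}\tau_{j_1}\cdots\tau_{j_{n+1}}\otimes v^{j_1}_1\cdots v^{j_{n+1}}_{n+1}.$$
Since $v$ already satisfies the anticommutativity relations of $O_{n+1}^{-1}$, the terms with any repeated index $j_a=j_b$ cancel, and the remaining sum over permutations collapses via commutativity of the $\tau$'s together with $\tau_1\cdots\tau_{n+1}=1$ to $1\otimes\adet v$. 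Compatibility with the relation therefore forces $\adet v=1$. Equivalently, this amounts to applying Equation~\eqref{eq.That} to the block partition $b_{n+1,0}$ and restricting to $\tilde V_1^{\otimes(n+1)}$, which directly produces the intertwiner $\breve\A_{n+1}\in\Mor(v^{\otimes(n+1)},1)$. Together with the previous paragraph this establishes $\Aut^+\frac{1}{2}Q_{n+1}\subset SO_{n+1}^{-1}$.

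For the reverse inclusion, the plan is to construct a coaction of $SO_{n+1}^{-1}$ on $C(\frac{1}{2}Q_{n+1})$ by $\alpha(\tau_i)=\sum_j\tau_j\otimes v^j_i$, where $v$ is now the fundamental representation of $SO_{n+1}^{-1}$, and verify every defining relation of $C(\frac{1}{2}Q_{n+1})$ as in Lemma~\ref{L.Qncoact}: self-adjointness, $\tau_i^2=1$, and commutativity are handled by the same anticommutativity--orthogonality argument, while the new relation $\tau_1\cdots\tau_{n+1}=1$ follows from $\adet v=1$ by reversing the collapsing computation above. Commutativity of $\alpha$ with the adjacency matrix then reduces to invariance of each $\tilde V_i$ under $\alpha$, which is automatic from the graded structure. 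Finally, the identification $\hat u^{(i)}=v^{\brewedge i}$ follows by explicitly evaluating $\alpha$ on products $\tau_{j_1}\cdots\tau_{j_i}$ with $j_1<\cdots<j_i$ in $\{1,\dots,n+1\}$ and observing that, as in the hypercube case, the surviving terms after anticommutative cancellation are precisely those indexing the anticommutative exterior power. The principal obstacle is the intertwiner case analysis of the second paragraph, where the sporadic coincidence at $n=3$ must be tracked carefully through the enlarged index set $\{1,\dots,n+1\}$; the determinant upgrade and the reverse inclusion are then essentially bookkeeping on top of the hypercube argument.
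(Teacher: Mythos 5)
Your proposal follows the paper's proof essentially step for step: the restriction of $\hat T^{(N)}_{\connecterpart}$ to $\tilde V_1=V_1\oplus V_n$ yielding the $O_{n+1}^{-1}$ relations (with the same identification of where $n\neq 1,3$ enters), the block partition $b_{n+1}$ on $\tilde V_1^{\otimes(n+1)}$ producing the antisymmetrizer and hence $\adet\hat u^{(1)}=1$, and the reverse inclusion via the coaction extending Lemma~\ref{L.Qncoact} with the extra relation $\tau_1\cdots\tau_{n+1}=1$ handled by the anticommutative determinant. The only cosmetic difference is that you phrase the determinant step primarily as compatibility of the coaction with $\tau_1\cdots\tau_{n+1}=1$ rather than directly as the intertwiner computation; the two are equivalent.
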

\begin{proof}
We follow the proof of Theorem~\ref{T.hypercube}. Let us first prove that $SO_{n+1}^{-1}\subset\Aut^+\frac{1}{2}Q_{n+1}$, that is, $SO_{n+1}^{-1}$ really acts on the halved hypercube. To do this, we are going to show that the mapping
$$\tau_j\mapsto\sum_{i=1}^{n+1}\tau_i\otimes u^i_j$$
extends to a $*$-homomorphism. Most of the work was already done in Lemma~\ref{L.Qncoact}. It only remains to prove that the extra relation we have here $\tau_1\cdots\tau_{n+1}=1$ is also preserved under this action. That is, we need to show that
$$1=\sum_{i_1,\dots,i_{n+1}=1}^{n+1}\tau_{i_1}\cdots\tau_{i_{n+1}}\otimes u_{i_11}\cdots u_{i_{n+1},n+1}$$
This is indeed true thanks to the anticommutative determinant relation $\adet u=\nobreak 1$.

Now for the converse direction, consider again the intertwiner $\hat T^{(N)}_{\connecterpart}$ and compute its restriction to $\tilde V_1$. It is easy to check that we obtain the same formula
$$
[\hat T^{(N)}_{\connecterpart}]^{ij}_{kl}=
\begin{cases}
	1/2^n&\raise.5ex\vtop{\hsize=11em\parindent=0pt\raggedright if in the tuple $(i,j,k,l)$, two and two indices are the same\vrule width0pt depth 10pt}\\
	0&\text{otherwise}
\end{cases}
$$
even on the ``extended'' space $\tilde V_1=V_1\oplus V_n$ (this is the point, where the assumption $n\neq 1,3$ is needed). Consequently, we have proven that $O_{n+1}^{-1}\supset\Aut^+\frac{1}{2}Q_{n+1}$.

Finally, it remains to find some intertwiner that would push us to the $SO_{n+1}^{-1}$. Consider the block partition $b_{n+1}=\blockpart\in\Part(n+1,0)$. The corresponding intertwiner is then of the form $[T^{(N)}_{\blockpart}]_{\alpha_1,\dots,\alpha_{n+1}}=\delta_{\alpha_1,\dots,\alpha_{n+1}}$. We are interested in restriction of its Fourier transform on $\tilde V_1$ (more precisely, $\tilde V_1^{\otimes (n+1)}$):
$$[\hat T^{(N)}_{\blockpart}]_{i_1,\dots,i_{n+1}}=
\begin{cases}
1&\text{if $(i_1,\dots,i_{n+1})$ is a permutation of $(1,\dots,n+1)$}\\
0&\text{otherwise}
\end{cases}$$
This is exactly the antisymmetrizer $\breve\A_n\in\Mor((\hat u^{(1)})^{\otimes n},1)$, which exactly corresponds to the relation $\adet \hat u^{(1)}=1$.
\end{proof}

\subsection{Open problem}

Let us finish this section with links to some open questions and related research. Note that there are free quantum analogues of the Coxeter groups of series $A$ (that is, the symmetric groups $S_n$) and series $B$ or $C$ (that is, the hyperoctahedral groups $H_n$), but so far we do not have any really free analogue of the Coxeter series $D$ (recall that series $D$ consists exactly of the symmetry groups of halved hypercubes $\frac{1}{2}Q_n$). The series of the anticommutative special orthogonal groups $SO_n^{-1}$ \emph{is} a liberation of the Coxeter groups of type $D$, but we should not call them \emph{free} as they obey some sort of commutativity laws (namely the anticommutativity).

\begin{quest}
Is there a free analogue for the Coxeter groups of series $D$?
\end{quest}

One particular candidate was recently discovered in~\cite{GD4} for $n=4$. For general $n$, the question is still open. If some candidates appear, then the natural follow-up question would be: Is there some graph, whose quantum symmetry is this?

\section{Folded hypercube}
\label{sec.FQ}

Folded hypercube is another graph that can be derived from the hypercube graph. Consider again $n\in\N$. The \emph{$(n+1)$-dimensional folded hypercube graph} $FQ_{n+1}$ is a quotient of the hypercube $Q_{n+1}$ obtained by identifying the opposite corners, so $\alpha\equiv\alpha+\iota$, where $\iota=(1,\dots,1)=\epsilon_1+\cdots+\epsilon_n$. By this, we end up with a graph having only half of the vertices, that is, $N=2^n$.

Also here, there is a more convenient description. The $(n+1)$-dimensional folded hypercube can be obtained from the $n$-dimensional ordinary hypercube by connecting all the opposite corners by an additional edge. So, the adjacency matrix can be written as
$$[A]^\alpha_\beta=
\begin{cases}
1&\text{if $\beta=\alpha+\epsilon_i$ for some $i$,}\\
1&\text{if $\beta=\alpha+\iota$,}\\
0&\text{otherwise.}
\end{cases}$$
In other words, it is the Cayley graph of $\Z_2^n$ with respect to the generating set $\{\epsilon_1,\dots,\epsilon_n,\iota\}$.

Now, let us again review, what is known about its classical and quantum symmetries. For $n\le 2$, the folded hypercube $FQ_{n+1}$ is just the complete graph on $N=2^n$ vertices, so its quantum automorphism group is $S_N^+$. For $n=3$, it is the complete bipartite graph on $4+4$ vertices, which is the complement of the disjoint union $K_4\sqcup K_4$, so its quantum automorphism group is $S_4^+\wr_*\Z_2$ (again, we refer to \cite{Bic04} for explanation of the free wreath product). So, the interesting area is $n>3$.

The classical automorphism group of $FQ_{n+1}$ for $n>3$ is of the form $\Z_2^{n}\rtimes S_{n+1}$ \cite{Mir16}, but it is a different semidirect product than in the case of the halved cube. Here, we take the quotient group of $H_{n+1}=\Z_2^{n+1}\rtimes S_{n+1}$ by identifying the $(n+1)$-tuple of signs with the opposite ones. In other words, it is the projective version $PH_{n+1}$. Therefore, we expect $PO_{n+1}^{-1}$ to be the quantum automorphism group.

In \cite{Sch20fq}, it was proven for $n$ even that the quantum automorphism group is actually $SO_{n+1}^{-1}$, which matches our guess since it is isomorphic with $PO_{n+1}^{-1}$ according to Proposition~\ref{P.POiso}.

\subsection{Determining $\Aut^+FQ_{n+1}$}
Let us now compute the eigenvalues of the adjacency matrix:
\begin{align*}
\lambda_\mu&=\sum_{i=1}^n\tau_\mu(\epsilon_i)+\tau_\mu(\iota)=\sum_{i=1}^n(-1)^{\mu_i}+(-1)^{\mu_1+\cdots+\mu_n}\\
&=n-2\deg\mu+(-1)^{\deg\mu}=n+1-4\lceil1/2\,\deg\mu\rceil
\end{align*}

Again, the eigenvalue depends only on the degree of $\mu$, but again the values of $\lambda_d:=\lambda_\mu$ with $\deg\mu=d$ are not mutually distinct. In this case, we have $\lambda_{2d-1}=\lambda_{2d}$. So, we have invariant subspaces $\tilde V_0:=V_0$, $\tilde V_2:=V_1\oplus V_2$ and so on. That is, $\tilde V_{2i}=V_{2i-1}\oplus V_{2i}$, $i=0,1,\dots,\lceil n/2\rceil$ (with $\tilde V_{n+1}=V_n$ if $n$ is odd). Denoting by $u$ the fundamental representation of $\Aut^+FQ_{n+1}$, we denote by
$$\hat u=\hat u^{(0)}\oplus\hat u^{(2)}\oplus u^{(4)}\oplus\cdots$$
the decomposition of its Fourier transform according to the invariant subspaces.

Denote $\tau_{n+1}:=1$, so the elements $\tau_{ij}:=\tau_{ji}:=\tau_i\tau_j$ with $1\le i<j\le n+1$ form a basis of $\tilde V_2$. In general, the basis of $\tilde V_{2i}$ is $(\tau_\alpha)$ with $\alpha\in\Z_2^{n+1}, \deg\alpha=2i$. We can use the basis of $\tilde V_2$ as a generating set of $C(\Z_2^n)=C(FQ_{n+1})$:
$$C(FQ_{n+1})=C^*\left(\tau_{ij},\;1\le i<j\le n+1\middle|\begin{matrix} \tau_{ij}^2=1,\;\tau_{ij}^*=\tau_{ij},\\\tau_{ij}\tau_{kl}=\tau_{kl}\tau_{ij},\;\tau_{ij}\tau_{ik}=\tau_{jk}\end{matrix}\right).$$

Alternatively, one can view $C(FQ_{n+1})$ as the subalgebra of $C(\Z_2^{n+1})=C(Q_{n+1})$
$$C(Q_{n+1})=C^*(\tau_1,\dots,\tau_{n+1}\mid \tau_i^2=1,\;\tau_i^*=\tau_i,\;\tau_i\tau_j=\tau_j\tau_i)$$
generated by the elements $\tau_{ij}=\tau_{ji}=\tau_i\tau_j$. This exactly corresponds to the fact that $FQ_{n+1}$ is a quotient graph of $Q_{n+1}$.

\begin{thm}
\label{T.fcube}
Consider $n\in\N\setminus\{1,3\}$. The quantum automorphism group of the folded hypercube graph $FQ_{n+1}$ is isomorphic to the anticommutative projective orthogonal group $PO_{n+1}^{-1}$. It acts through the fundamental representation $u$  with $\hat u^{(i)}=v^{\brewedge 2i}$, where $v$ is the fundamental representation of $O_{n+1}^{-1}$.
\end{thm}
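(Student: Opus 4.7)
The plan is to establish the two inclusions $PO_{n+1}^{-1} \subseteq \Aut^+ FQ_{n+1}$ and $\Aut^+ FQ_{n+1} \subseteq PO_{n+1}^{-1}$ separately, following the template of Theorems~\ref{T.hypercube} and~\ref{T.demicube}. For $PO_{n+1}^{-1} \subseteq \Aut^+ FQ_{n+1}$ I would exploit the algebra embedding $C(FQ_{n+1}) \subseteq C(Q_{n+1})$, under which $\tau_{ij}$ is identified with the product $\tau_i \tau_j$. Denoting by $v$ the fundamental representation of $O_{n+1}^{-1}$, set
$$\alpha(\tau_{ij}) := \sum_{k,l=1}^{n+1} \tau_{kl} \otimes v^k_i v^l_j.$$
Both factors on the right are quadratic in their respective generators, so the image lies in $C(FQ_{n+1}) \otimes \Olg(PO_{n+1}^{-1})$. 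Moreover $\alpha$ is just the restriction of the coaction of $O_{n+1}^{-1}$ on $C(Q_{n+1})$ from Lemma~\ref{L.Qncoact} to the subalgebra generated by the products $\tau_i \tau_j$, and hence automatically preserves all defining relations of $C(FQ_{n+1})$ (self-adjointness, $\tau_{ij}^2 = 1$, commutativity, and the telescoping identity $\tau_{ij}\tau_{ik} = \tau_{jk}$).

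For the reverse inclusion I would apply the intertwiner recipe of Algorithm~\ref{Algorithm}. The Fourier-transformed fundamental representation of $\Aut^+ FQ_{n+1}$ decomposes as $\hat u = \hat u^{(0)} \oplus \hat u^{(1)} \oplus \cdots$ on the invariant subspaces $\tilde V_0 \oplus \tilde V_2 \oplus \cdots$, and the subrepresentation $\hat u^{(1)}$ on $\tilde V_2$ is faithful since $\tilde V_2$ generates $C(FQ_{n+1})$ as an algebra. Identifying $\tilde V_2$ of dimension $\binom{n+1}{2}$ with $(\C^{n+1})^{\brewedge 2}$ via the basis correspondence $\tau_{ij} \leftrightarrow e_i \brewedge e_j$, I would compute the restriction of the Fourier-transformed intertwiner $\hat T^{(N)}_{\connecterpart}$ to $\tilde V_2 \otimes \tilde V_2$. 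The expected outcome is that it coincides with the intertwiner defining $v^{\brewedge 2}$ for the fundamental representation $v$ of $O_{n+1}^{-1}$; because only quadratic products $v^k_i v^l_j$ are ever recovered by the coaction, the quantum group thereby identified is precisely $PO_{n+1}^{-1}$ rather than $O_{n+1}^{-1}$ itself.

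The main obstacle lies in this second step. Whereas for the halved hypercube the relevant invariant subspace $\tilde V_1 = V_0 \oplus V_1$ contributed a single copy of a fundamental representation, here $\tilde V_2 = V_1 \oplus V_2$ blends two degree components of distinct dimensions, with the pair index $(i,j)$, $1 \le i < j \le n+1$, covering both degree-$1$ and degree-$2$ basis vectors through the convention $\tau_{n+1} = 1$. This complicates the bookkeeping needed to verify that the restricted partition intertwiners factor through the exterior-square structure of $v^{\brewedge 2}$. The assumption $n \ne 1, 3$ plays the same role as in Theorem~\ref{T.demicube}, ruling out accidental dimension coincidences that would admit additional intertwiners and enlarge the quantum group. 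For $n$ even one may shortcut part of the argument by invoking Proposition~\ref{P.POiso} to identify $PO_{n+1}^{-1} \simeq SO_{n+1}^{-1}$ and appealing to \cite{Sch20fq}; the genuinely new work is therefore concentrated in the odd case $n \ge 5$.
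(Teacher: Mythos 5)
Your first inclusion is exactly the paper's: the coaction of $PO_{n+1}^{-1}$ on $C(FQ_{n+1})$ is obtained by restricting the coaction of $O_{n+1}^{-1}$ on $C(Q_{n+1})$ from Lemma~\ref{L.Qncoact} to the subalgebra of even polynomials in the $\tau_i$, and essentially no further verification is needed. The gap is in the reverse inclusion, and it is a missing idea rather than heavy bookkeeping. Verifying that the restriction of $\hat T^{(N)}_{\connecterpart}$ (or of any other particular map) to $\tilde V_2$ is an intertwiner of $v^{\brewedge 2}$ only shows that those particular maps lie in the relevant $\Mor$-spaces; to conclude $\Aut^+FQ_{n+1}\subseteq PO_{n+1}^{-1}$ you must know that the intertwiners you have exhibited \emph{generate} the full representation category of $PO_{n+1}^{-1}$ in its realization on $(\C^{n+1})^{\brewedge 2}$ --- otherwise the quantum group cut out by your relations could be strictly larger than $PO_{n+1}^{-1}$. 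There is no off-the-shelf ``intertwiner defining $v^{\brewedge 2}$'' to compare against: producing a finite generating set for $\RCat_{\hat PO_{n+1}^{-1}}$ is the actual content of most of Section~\ref{sec.FQ} (Theorem~\ref{T.projwedge} and Corollaries~\ref{C.projwedge}--\ref{C2.antiprojwedge}). That argument requires (i) the faithfulness of $v\brewedge v$ as a representation of $PO_{n+1}^{-1}$, obtained from the simplicity of $PGL_n$ together with the monoidal equivalence between $O_n$ and $O_n^{-1}$, and (ii) a delicate diagrammatic computation in the antisymmetrized Brauer category that degenerates at $n+1\in\{2,4,6,8\}$, so that the cases $n+1=7,9$ need an additional generator coming from $\fourpart$. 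None of this is anticipated in your plan.

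A smaller inaccuracy: the hypothesis $n\neq 1,3$ does not play ``the same role as in Theorem~\ref{T.demicube}.'' For $n=1,3$ the graph degenerates ($FQ_{n+1}$ is complete, resp.\ complete bipartite) and the answer is a different quantum group; the genuinely delicate parameter values in the converse inclusion are $n+1=7,9$, which are \emph{not} excluded by the hypothesis and must be handled by a separate argument. Your proposed shortcut for $n$ even via Proposition~\ref{P.POiso} and \cite{Sch20fq} is legitimate but does not reduce the remaining work, since the category-generation result is needed uniformly for the odd case anyway.
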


Before proving this theorem, we need to do some preparatory work first. At this point, it is not even clear whether the prescribed representation $\bigoplus v^{\brewedge 2i}$ is a faithful representation of $PO_{n+1}^{-1}$. We are actually going to prove that $v\brewedge v$ is a faithful representation. As a second step, we need to characterize $\Olg(PO_{n+1}^{-1})$ by generators and relations in terms of this representation $v\brewedge v$. Equivalently, we need to find suitable generators of the representation category $\RCat$ associated to $v\brewedge v$. Only this allows us to use our standard machinery and prove Theorem~\ref{T.fcube}.

This preparatory work is done in Subsection~\ref{secc.projwedge}. The proof of Theorem~\ref{T.fcube} itself is then formulated in Subsection~\ref{secc.fcubeproof}.

\subsection{Projective version represented by exterior product}
\label{secc.projwedge}

In Section~\ref{secc.proj}, we defined the projective version of a compact matrix quantum group again as a compact matrix quantum group with the fundamental representation of the form $u\otimes u$. In the classical case, we have also another faithful representation at our disposal.

\begin{prop}
\label{P.projwedge}
Consider $G\subset O_n$, $n>1$. Denote by $u$ its fundamental representation. Then $u\wedge u$ is a faithful representation of $PG$.
\end{prop}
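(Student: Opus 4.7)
The goal is to show that the subalgebra $\mathcal{A} \subset \Olg(G)$ generated by the entries of $u \wedge u$ coincides with $\Olg(PG)$, which is by definition generated by the entries $u^i_j u^k_l$ of $u \otimes u$. Since $G \subset O_n$ is classical, the entries of $u$ commute and the representation $u \otimes u$ decomposes as $1 \oplus S^2_0 u \oplus (u \wedge u)$, the three pieces being the trivial, the traceless symmetric, and the exterior parts. The trivial summand contributes nothing new, so the task reduces to expressing every product $u^i_m u^j_n$ inside~$\mathcal{A}$.

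The main computational tool is the following contraction identity. Writing $L^{ij}_{kl} := u^i_k u^j_l - u^j_k u^i_l$ for the (antisymmetrically extended) entries of $u \wedge u$, a direct calculation using commutativity together with the orthogonality $\sum_k u^a_k u^b_k = \delta_{ab}$ yields
\begin{equation*}
\sum_{k} L^{ij}_{km} L^{ab}_{kn} = \delta_{ia}\, u^j_m u^b_n - \delta_{ib}\, u^j_m u^a_n - \delta_{ja}\, u^i_m u^b_n + \delta_{jb}\, u^i_m u^a_n.
\end{equation*}
Provided three pairwise distinct indices are available, specializing to $(a,b) = (i,b)$ with $i, j, b$ mutually distinct yields $\sum_k L^{ij}_{km} L^{ib}_{kn} = u^j_m u^b_n$, so every off-diagonal product $u^j_m u^b_n$ (with $j \neq b$) lies in $\mathcal{A}$. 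Specializing instead to $(a,b) = (i,j)$ with $i \neq j$ gives the ``two-diagonal'' combination
\begin{equation*}
\sum_k L^{ij}_{km} L^{ij}_{kn} = u^i_m u^i_n + u^j_m u^j_n \in \mathcal{A}.
\end{equation*}
Denoting this right-hand side by $A_{ij}$, the elementary identity $A_{ij} + A_{ik} - A_{jk} = 2\, u^i_m u^i_n$ for three distinct indices $i, j, k$ then isolates the individual diagonal term $u^i_m u^i_n$, completing the extraction.

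Assembling these, every product $u^a_m u^b_n$ lies in $\mathcal{A}$, and therefore $\mathcal{A} = \Olg(PG)$, i.e.\ $u \wedge u$ is faithful. I expect the main obstacle to be handling the diagonal squares $u^i_m u^i_n$: the contraction identity alone cleanly produces only off-diagonal products together with two-diagonal sums, and the trick of combining three such sums is what pins down the individual diagonal entries. This is also why at least three distinct indices are needed in order for the argument to run through as stated.
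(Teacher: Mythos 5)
Your proof is correct for $n\ge 3$ and is a genuinely different argument from the one in the paper. The paper works at the level of groups: the map $A\mapsto A\wedge A$ on $GL(V)$ induces a map $PGL(V)\to PGL(V\wedge V)$, which is injective because $PGL(V)$ is simple; hence the kernel of $A\mapsto A\wedge A$ consists of scalars, and restricting to $G\subset O_n$ identifies it with $\{\pm I\}$, which is exactly the kernel of $A\mapsto A\otimes A$. Your argument instead works directly in the coordinate algebra: the contraction identity
$\sum_{k} L^{ij}_{km} L^{ab}_{kn} = \delta_{ia} u^j_m u^b_n - \delta_{ib} u^j_m u^a_n - \delta_{ja} u^i_m u^b_n + \delta_{jb} u^i_m u^a_n$
is verified correctly (each summand is $\pm$ an entry of $u\wedge u$, using $L^{ij}_{km}=-L^{ij}_{mk}$ and $L^{ij}_{mm}=0$, so the left-hand side does lie in your subalgebra $\mathcal A$), the two specializations recover all off-diagonal products and the sums $u^i_mu^i_n+u^j_mu^j_n$, and the three-index combination isolates the diagonal products. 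What your approach buys is an explicit, elementary and quantitative generation statement (every $u^a_mu^b_n$ is a concrete quadratic expression in the entries of $u\wedge u$), avoiding the simplicity of $PGL_n$ entirely; what it costs is reliance on commutativity of the entries, so unlike the paper's categorical route (which transports the classical statement to $O_n^{-1}$ by monoidal equivalence in Corollary~\ref{P.antiprojwedge}) it would need reworking in the twisted setting.

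One caveat, which is really a defect of the statement rather than of your proof: for $n=2$ the representation $u\wedge u=\det u$ is one-dimensional and is \emph{not} faithful for, say, $G=O_2$ (the determinant does not separate the points of $PO_2$). Both your argument and the paper's implicitly require $n\ge 3$ --- yours because three pairwise distinct upper indices are needed, the paper's because $PGL(V\wedge V)$ is trivial when $\dim(V\wedge V)=1$, so the induced map is not ``obviously non-trivial''. You correctly flagged the need for three distinct indices; since the proposition is only ever applied with $n+1\ge 5$, nothing downstream is affected, but the hypothesis should read $n>2$.
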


Before formulating the proof, let us clarify the definition of projective groups. Our definition from Section~\ref{secc.proj} works for orthogonal groups only. For a general matrix group $G\subset GL_n$, one typically defines its projective version to be the quotient $PG=G/\lambda I$, $\lambda\in\C\setminus\{0\}$ (thus, in particular, $PGL_n=GL_n/Z(GL_n)$). Note that if $G$ is orthogonal, then $\lambda I\in G$ only for $\lambda=\pm 1$. Therefore, assuming $G\subset O_n$, our definition $PG=\{A\otimes A\mid A\in G\}$ is compatible with the general one (since we can reconstruct $A$ from $A\otimes A$ up to a global sign).

Finally, let us mention that over $\C$, we obviously have $PGL_n=PSL_n$, which is known to be a simple group, that is, it has no non-trivial normal subgroups.

\begin{proof}\!\!\!\footnote{Credit for this proof goes to a math.StackExchange user Joshua Mundinger \url{https://math.stackexchange.com/a/4049085/359512}}
Denote $V:=\C^n$, so that $GL_n=GL(V)$. Consider the homomorphism $\phi\colon GL(V)\to GL(V\wedge V)$ mapping $A\mapsto A\wedge A$. Since it maps multiples of identity to multiples of identity, it induces a homomorphism $\tilde\phi\colon PGL(V)\to PGL(V\wedge V)$. Since the $PGL(V)$ is simple (and the mapping $\tilde\phi$ is obviously non-trivial), we must have that $\tilde\phi$ is injective. Consequently, the kernel of $\phi$ is contained in scalar matrices.

Now, we can restrict $\phi$ to any subgroup $G\subset GL_n$. Note that we can factor $\phi$ as $\phi\colon G\overset{\phi_1}{\to} G'\overset{\phi_2}{\to} G''$, where $G'=\{A\otimes A\mid A\in G\}$ and $G''=\{A\wedge A\mid A\in G\}$. We need to prove that $\phi_2\colon G'\to G''$ is an isomorphism for $G\subset O_n$ as we have $G'\simeq PG$ in this case. Recall that we have the property $\ker\phi\subset\{\lambda I\}$. If $G\subset O_n$, we must actually have $\ker\phi\subset\{\pm I\}$. But we know that $\ker\phi_1=\{\pm I\}$ and hence $\phi_2$ must indeed be an isomorphism.
\end{proof}

It is known that the anticommutative orthogonal group can be obtained from the ordinary one by a 2-cocycle twist. Consequently, the two quantum groups are \emph{monoidally equivalent}. More precisely, there exists a monoidal isomorphism of the corresponding representation categories $\Rep O_n\to\Rep O_n^{-1}$ mapping the fundamental representations one to another (that is, there is also a monoidal isomorphism $\RCat_{O_n}\to\RCat_{O_n^{-1}}$). See e.g.\ \cite{BBC07,GWgen}. This implies the following corollary.

\begin{cor}
\label{P.antiprojwedge}
Denote by $\breve u$ the fundamental representation of $O_n^{-1}$, $n>1$. Then $\breve u\brewedge\breve u$ is a faithful representation of $PO_n^{-1}$.
\end{cor}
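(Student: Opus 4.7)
The plan is to transfer Proposition~\ref{P.projwedge} from $O_n$ to $O_n^{-1}$ along the monoidal equivalence $\Phi\colon\Rep O_n\to\Rep O_n^{-1}$ provided by the 2-cocycle twist. The key fact is that, by construction of this twist, $\Phi$ carries $u^{\otimes k}\mapsto\breve u^{\otimes k}$ on objects and $T_p^{(n)}\mapsto\breve T_p^{(n)}$ on morphisms for every partition $p$ whose blocks all have even size (cf.\ \cite{BBC07,GWgen}). In particular $\Phi$ sends the antisymmetrizer projection $\A_2^*\A_2\in\Mor(u\otimes u,u\otimes u)$ to $\breve\A_2^*\breve\A_2\in\Mor(\breve u\otimes\breve u,\breve u\otimes\breve u)$, so the subrepresentation $u\wedge u\subset u\otimes u$ corresponds exactly to $\breve u\brewedge\breve u\subset\breve u\otimes\breve u$, and likewise $(u\wedge u)^{\otimes k}$ corresponds to $(\breve u\brewedge\breve u)^{\otimes k}$ for every~$k$.

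Next I would recast faithfulness as a purely categorical statement. For an orthogonal CMQG $G$ with self-dual fundamental representation, faithfulness of a (self-dual) representation $v$ is equivalent, via Peter--Weyl together with Tannaka--Krein, to the assertion that every irreducible subrepresentation of some tensor power of the fundamental representation occurs as a subrepresentation of some~$v^{\otimes k}$. Applied to $PG$ with fundamental representation $u\otimes u$, this says that $u\wedge u$ is faithful for $PG$ iff every irreducible subobject of $(u\otimes u)^{\otimes k}$ in $\Rep G$ is isomorphic to a subobject of some $(u\wedge u)^{\otimes l}$. This condition lives entirely in $\Rep G$ and is invariant under monoidal $*$-equivalences.

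Combining the two ingredients: Proposition~\ref{P.projwedge} (with $G=O_n$) establishes the above categorical condition in $\Rep O_n$; applying $\Phi$ yields the same condition in $\Rep O_n^{-1}$ with $u\otimes u$ replaced by $\breve u\otimes\breve u$ and $u\wedge u$ replaced by $\breve u\brewedge\breve u$, which is exactly faithfulness of $\breve u\brewedge\breve u$ for $PO_n^{-1}$. The only delicate point — and the step I would double-check carefully — is verifying that $\Phi$ really does identify the two orthogonal projections $\A_2^*\A_2$ and $\breve\A_2^*\breve\A_2$, i.e.\ that the deformed symmetrizer/antisymmetrizer built into the definition of $\brewedge$ coincides with the image of $\wedge$ under the cocycle twist. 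This is the content of the remark preceding the corollary and follows from expanding the $\A_2$ in terms of the partitions $\PAid$ and $\PAcross$ (both of even block size) and noting that $\Phi$ acts on them by the sign-twist used in defining $\breve T_p^{(n)}$.
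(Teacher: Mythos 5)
Your proof is correct and follows essentially the same route as the paper's: transfer Proposition~\ref{P.projwedge} along the monoidal equivalence $\Rep O_n\to\Rep O_n^{-1}$ coming from the cocycle twist, checking that it carries $\A_2^*\A_2$ to $\breve\A_2^*\breve\A_2$ and hence $u\wedge u$ to $\breve u\brewedge\breve u$. The paper simply asserts that faithfulness is preserved under this correspondence, whereas you spell out the Peter--Weyl/irreducible-subobject reformulation that makes the invariance under monoidal $*$-equivalence explicit; that added justification is sound.
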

\begin{proof}
Denote by $u$ the fundamental representation of $O_n$. As mentioned above, there is a monoidal equivalence $\Rep O_n\to\Rep O_n^{-1}$ mapping $u\mapsto\breve u$. It is easy to check that this monoidal equivalence also maps $\A_2^*\A_2\in\Mor(u\otimes u,u\otimes u)$ to $\breve\A_2^*\breve\A_2\in\Mor(\breve u\otimes\breve u,\breve u\otimes\breve u)$. Consequently, it must map $u\wedge u$ to $\breve u\brewedge\breve u$. Since the former is a faithful representation of $PO_n$, the latter must be a faithful representation of $PO_n^{-1}$.
\end{proof}


In the following text, we will denote the projective orthogonal group represented by the matrix $u\wedge u$ by $\hat PO_n=(\Olg(PO_n),u\wedge u)$. (The hat should remind us about the wedge product $\wedge$.) Expressing the representation category $\RCat_{\hat PO_n}$ in terms of the representation category $\RCat_{O_n}$ is easy: We only have to compose all the intertwiners with the antisymmetrizer $\A_{2}\colon x\otimes y\mapsto x\wedge y$. That is,
\begin{align*}
\RCat_{\hat PO_n}(k,l)&=\{\A_{(2)}^{\otimes l}T\A_{(2)}^{*\otimes k}\mid T\in\RCat_{O_n}(2k,2l)\}\\
                      &=\{\A_{(2)}^{\otimes l}T_p\A_{(2)}^{*\otimes k}\mid p\in \Pair_n(2k,2l)\},
\end{align*}
where $\Pair_n=\langle\pairpart,\crosspart\rangle_n=\spanlin\{p\mid\text{$p$ is a pairing}\}$. (Pairing is a partition, where all blocks have size two. By an old result of Brauer \cite{Bra37}, this is exactly the category corresponding to the orthogonal group. See also e.g.~\cite{BS09,Web17,GroThesis}.) However, the question is, what is the generating set of this category. Finding some small set of generators is actually not so easy as it may seem on the first sight.

In order to understand the following text, one needs to familiarize the category operations on linear categories of partitions (or at least the linear category of all pairings $\Pair_n$). The rough idea is that in order to perform the composition of two pairings, one should simply follow the lines and, if needed, replace all loops by the factor $n$. We refer to \cite[Section~3]{GWintsp} for more details.

In the following computations, we are going to treat the antisymmetrizer as a projection rather than a coisometry, so $\A_2^*=\A_2$. In this sense, it can be expressed in terms of partitions as $\frac{1}{2}(\Pabba-\Pabab)$. However, we are going to use a more convenient notation: In the diagrams, we will denote the antisymmetrizer by \Acol. So, for example, the antisymmetrization of $\immersepart$ will be denoted by $\PAimmerse$, the antisymmetrization of the identity is $\PAid=\frac{1}{2}(\Pabba-\Pabab)$. For any partition $p\in\Part(2k,2l)$, we are going to denote its antisymmetrization by $\mathring p=\PAid^{\otimes l}\cdot p\cdot \PAid^{\otimes k}$. Consequently, we can write
\begin{equation}\label{eq.POnCat}
\RCat_{\hat PO_n}(k,l)=\{T_{\mathring p}\mid p\in \Pair_n(2k,2l)\},
\end{equation}
so $\RCat_{\hat PO_n}$ is modelled by a diagrammatic category $\Pair_n^\circ:=\{\mathring p\mid p\in\Pair_n\}$.

\begin{thm}
\label{T.projwedge}
Consider $n\neq 2,4,6,8$. Then the category $\Pair_n^\circ$ is generated by the set $\{\PApair,\PAimmerse\}$.
\end{thm}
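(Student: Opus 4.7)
The approach is to exploit the known presentation $\Pair_n = \langle \pairpart, \crosspart\rangle_n$ and push it through the antisymmetrization operation $p \mapsto \mathring p$. Any morphism $\mathring p \in \Pair_n^\circ(k,l)$ is by definition built, inside $\Pair_n$, from $\pairpart$, $\crosspart$ and $\PAid = \tfrac{1}{2}(\Pabba - \Pabab)$, so the task reduces to showing that the monoidal $*$-category generated by $\{\PApair,\PAimmerse\}$ already contains all such $\mathring p$. The first step is to use rigidity: $\PApair \in \Pair_n^\circ(0,2)$ is, up to a scalar, the duality morphism for the single generating object of $\Pair_n^\circ$, and together with its adjoint it makes $\langle\PApair,\PAimmerse\rangle$ rigid. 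By Frobenius reciprocity it then suffices to show that every antisymmetrized pairing in $\Pair_n^\circ(0,k)$ lies in this generated category.

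The next step is combinatorial. A pairing $p$ of $2k$ lower points, grouped into $k$ consecutive antisymmetrizer blocks of size two, induces a $2$-regular multigraph on $k$ vertices, with one vertex per block and one edge per pair of $p$ joining the two blocks containing its endpoints. If some pair of $p$ lies inside a single block, then $\mathring p = 0$ because $\A_2$ annihilates the diagonal. Hence the nonvanishing $\mathring p$ correspond precisely to loop-free $2$-regular multigraphs on $k$ vertices, that is, to disjoint unions of cycles of length at least two, and it is enough to realize each such pattern from the two generators.

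For the explicit construction I would proceed inductively: cycles of length $2$ are realized by copies of $\PApair$ itself; a $3$-cycle is obtained by capping $\PAimmerse \in \Pair_n^\circ(1,2)$ with $\PApair$ on its upper leg; longer cycles arise by gluing several copies of $\PAimmerse$ in a chain (output leg of one into an input leg of the next) and finally closing the chain with $\PApair$; and disjoint unions are produced by the monoidal tensor product. This delivers every loop-free $2$-regular multigraph pattern as an element of $\langle\PApair,\PAimmerse\rangle$.

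The main anticipated obstacle is at the scalar level. When $\PAimmerse$ is iterated and its legs are glued through antisymmetrizers, the resulting element is not a single clean cycle but a linear combination of the target cycle together with various shorter-cycle patterns, with coefficients polynomial in $n$. Extracting the target cycle from this mixture amounts to inverting a matrix whose entries are polynomials in $n$, and the excluded values $n \in \{2,4,6,8\}$ should correspond exactly to the vanishing locus of its determinant, where the target cycle cannot be separated from the lower-cycle contributions. Carrying out this linear-algebra reduction explicitly, and verifying the invertibility outside $\{2,4,6,8\}$, is the delicate part of the argument; the rest is essentially category-theoretic bookkeeping.
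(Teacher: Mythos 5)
Your combinatorial picture is right as far as it goes: antisymmetrized pairings do vanish when a pair sits inside a two-point, the surviving ones are indexed by loop-free $2$-regular multigraphs on the two-points (disjoint unions of cycles), and the consecutive cycle $\mathring p_k$ is indeed extracted inductively from chained copies of $\PAimmerse$ at the cost of a scalar (the paper gets $\frac{n-2}{4}\mathring p_k+\frac14\mathring p_3\otimes\mathring p_{k-2}$, so this step only needs $n\neq 2$). The genuine gap is your final reduction ``disjoint unions are produced by the monoidal tensor product.'' A tensor product of cycle elements only realizes patterns in which each cycle occupies a \emph{contiguous} block of two-points in the fixed linear order, and even a single cycle only in its canonical planar arrangement. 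A generic pairing interleaves its cycles arbitrarily among the two-points, and rotation does not help, since rotation preserves the cyclic order of the two-points (e.g.\ the pattern with double edges $1$--$3$ and $2$--$4$ is not a rotation of $\PApair\otimes\PApair$). To reach these you must first show that the generated category contains the two-point transposition $\PAcross$, which lets you permute two-points and reduce any pairing to a tensor product of canonical cycles.

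Producing $\PAcross$ (together with $\PAconnecter$) from $\{\PApair,\PAimmerse\}$ is in fact the heart of the proof and is exactly where the exclusions $n\neq 4,6,8$ enter: the paper's Lemma~\ref{L2.projwedge} manufactures, by squaring and cubing explicit linear combinations such as $\PAconnecter-\PAinnercross$ and $(n-8)\PAconnecter+\PAcross$, a multiple $(n-4)(n-6)(n-8)\,\PAconnecter$ of the connecter, from which $\PAcross$ follows. Your proposal neither identifies that this element is needed nor supplies the computation, and it misattributes the source of the excluded values: the ``matrix inversion'' you defer (separating a long cycle from shorter-cycle contributions) is the comparatively harmless part requiring only $n\neq 2$, not the locus $\{4,6,8\}$. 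As written, the argument therefore establishes at best that the generated category contains all $\mathring p$ with non-crossing two-point structure, not all of $\Pair_n^\circ$.
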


Before proving the theorem, let us mention two important Corollaries:

\begin{cor}
\label{C.projwedge}
Consider $n\neq 2,4,6,8$. Then the representation category $\RCat_{\hat PO_n}$ is generated by $\{T^{(n)}_{\PApair},T^{(n)}_{\PAimmerse}\}$.
\end{cor}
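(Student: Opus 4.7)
The plan is to reduce the corollary to Theorem~\ref{T.projwedge} by transporting the diagrammatic statement through the monoidal unitary functor $T^{(n)}$; no additional combinatorics should be required.

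First I would appeal to equation~\eqref{eq.POnCat}, which identifies $\RCat_{\hat PO_n}$ with the image of the diagrammatic category $\Pair_n^\circ$ under $T^{(n)}$: every intertwiner space $\RCat_{\hat PO_n}(k,l)$ consists precisely of the elements $T^{(n)}_{\mathring p}$ with $p\in\Pair_n(2k,2l)$. Observe also that $\PApair\in\Pair_n^\circ(0,1)$ supplies a duality morphism at the $\hat PO_n$-level, so that the image is a rigid monoidal $*$-category in its own right, as required by the Woronowicz--Tannaka--Krein setup of Theorem~\ref{T.TK}.

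Next, Theorem~\ref{T.projwedge} asserts that $\Pair_n^\circ$ is generated as a monoidal $*$-category by $\{\PApair,\PAimmerse\}$, i.e.\ every element can be expressed as a linear combination of diagrams built from these two generators via composition, tensor product, and the $*$-involution. Applying the monoidal unitary functor $T^{(n)}$ termwise to any such expression yields a parallel expression in $\RCat_{\hat PO_n}$ with $\PApair$ and $\PAimmerse$ replaced by $T^{(n)}_{\PApair}$ and $T^{(n)}_{\PAimmerse}$ respectively. Combined with the first step, this shows that every intertwiner in $\RCat_{\hat PO_n}$ lies in the rigid monoidal $*$-category generated by these two operators; the reverse inclusion is immediate since both generators manifestly belong to $\RCat_{\hat PO_n}$.

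The main obstacle is essentially non-existent once Theorem~\ref{T.projwedge} has been established: functorial transport of a generation statement is routine, and crucially it does not require $T^{(n)}$ to be faithful (it need not be for small $n$, but this is irrelevant since we only need the image of the generating set to generate the image category). All the genuine combinatorial difficulty, including the role of the numerical restriction $n\neq 2,4,6,8$, is packaged inside the proof of Theorem~\ref{T.projwedge} itself.
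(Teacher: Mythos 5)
Your argument is correct and is exactly the paper's proof: the paper's justification is the one-line ``follows directly from Theorem~\ref{T.projwedge} and Equation~\eqref{eq.POnCat}'', and your proposal simply spells out the (routine) functorial transport of the generation statement, including the correct observation that faithfulness of $T^{(n)}$ is not needed.
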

\begin{proof}
Follows directly from Theorem~\ref{T.projwedge} and Equation~\eqref{eq.POnCat}.
\end{proof}

\begin{cor}
\label{C.antiprojwedge}
Consider $n\neq 2,4,6,8$. Then the representation category $\RCat_{\hat PO_n^{-1}}$ is generated by $\{\breve T^{(n)}_{\PApair},\breve T^{(n)}_{\PAimmerse}\}$.
\end{cor}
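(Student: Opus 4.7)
The plan is to obtain Corollary~\ref{C.antiprojwedge} from Corollary~\ref{C.projwedge} via the monoidal equivalence $\Rep O_n \simeq \Rep O_n^{-1}$ that was already used in the proof of Corollary~\ref{P.antiprojwedge}. Concretely, I would invoke the fact (recalled in Subsection~\ref{secc.part} and in \cite{GWgen}) that this equivalence is realised at the diagrammatic level by the functor sending the partition intertwiner $T_p^{(n)}$ to its deformed counterpart $\breve T_p^{(n)}$, whenever $p$ has only even-sized blocks. Since $\pairpart$ and $\immersepart$ are built out of pair blocks, and the antisymmetrizer $\A_2^*\A_2 = \frac{1}{2}(T^{(n)}_{\Pabba} - T^{(n)}_{\Pabab})$ is a linear combination of such partitions, the generators $T^{(n)}_{\PApair}$ and $T^{(n)}_{\PAimmerse}$ (obtained by pre- and post-composition with $\A_2^*\A_2$) all lie in the domain of this functor and are mapped respectively to $\breve T^{(n)}_{\PApair}$ and $\breve T^{(n)}_{\PAimmerse}$.

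Next I would argue that this equivalence restricts to an equivalence between $\Rep \hat PO_n$ and $\Rep \hat PO_n^{-1}$. This is precisely the content already verified in the proof of Corollary~\ref{P.antiprojwedge}: $u \mapsto \breve u$ and $\A_2^*\A_2 \mapsto \breve\A_2^*\breve\A_2$, so $u \wedge u$ is mapped to $\breve u \brewedge \breve u$, which are the fundamental representations of $\hat PO_n$ and $\hat PO_n^{-1}$ respectively. Monoidal equivalences preserve generating sets of representation categories in the sense of Theorem~\ref{T.TK}; hence applying the equivalence to the generating set $\{T^{(n)}_{\PApair}, T^{(n)}_{\PAimmerse}\}$ of $\RCat_{\hat PO_n}$ (provided by Corollary~\ref{C.projwedge}, using the hypothesis $n \neq 2,4,6,8$) yields the generating set $\{\breve T^{(n)}_{\PApair}, \breve T^{(n)}_{\PAimmerse}\}$ of $\RCat_{\hat PO_n^{-1}}$, which is the claim.

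There is essentially no obstacle here beyond bookkeeping: the whole substance of the result lies in Theorem~\ref{T.projwedge} (equivalently, Corollary~\ref{C.projwedge}), and the deduction for the anticommutative case is a formal consequence of the twisting procedure. The only mildly delicate point is to confirm that the twist functor $T_p \mapsto \breve T_p$ is well-defined and monoidal on the subcategory generated by pairings and antisymmetrizers; but this is exactly the setting of \cite[Section~7]{GWgen}, since every partition appearing is made of even-sized blocks.
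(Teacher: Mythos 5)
Your proposal is correct and follows essentially the same route as the paper: both invoke the monoidal equivalence $\Rep O_n\to\Rep O_n^{-1}$ sending $T_p\mapsto\breve T_p$ for pairings $p$, restrict it to the full subcategory $\RCat_{\hat PO_n}\subset\RCat_{O_n}$, and transport the generating set from Corollary~\ref{C.projwedge}. The extra care you take in checking that the antisymmetrized generators are linear combinations of pairings (so that they lie in the domain of the twist functor and land on $\breve T^{(n)}_{\PApair}$, $\breve T^{(n)}_{\PAimmerse}$) is a detail the paper leaves implicit, but it is the same argument.
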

\begin{proof}
As we mentioned earlier, $O_n$ is monoidally equivalent with $O_n^{-1}$, the monoidal equivalence maps the fundamental representation $u$ of $O_n$ to the fundamental representation $\breve u$ of $O_n^{-1}$. As for the intertwiners, it maps $T_p\in\Mor(u^{\otimes k},u^{\otimes l})$ for $p\in\Pair_n$ to $\breve T_p\in\Mor(\breve u^{\otimes k},\breve u^{\otimes l})$ defined at the end of Section~\ref{secc.part}. The Corollary then follows by restricting the monoidal equivalence to the full subcategory $\RCat_{PO_n}\subset\RCat_{O_n}$ and applying to Corollary~\ref{C.projwedge}.
\end{proof}

Now, we focus on the proof of Theorem~\ref{T.projwedge}. To make it easier to follow, we split it into several lemmata.

First, let us do a small remark on rotations in the category $\Pair_n^\circ$. This category (as well as the category $\RCat_{PO_n}$) is rigid and the duality morphism looks like this: $\PApair$. This again allows to do rotations in the category, but those rotations look a bit different than in the original category $\Pair_n$. Consider some $p\in\Part(2k,2l)$. Let us call each pair of some $(2i+1)$-st and $(2i+2)$-nd point on either lower or upper row in $p$ a \emph{two-point}. When drawing $\mathring p$, those two-points are highlighted by the ellipse $\Acol$. The element $\PApair$ then allows to rotate those two-points in $\mathring p$ as a whole, not separately. For instance, rotating $\PAthree$, we may obtain $\PAimmerse$, but we cannot obtain \LPartition{}{0.3:0.8,1.2;0.3:1.8,2.2;0.3:2.8,3.2}[aaa]. (The latter would actually equal to zero due to the antisymmetrization: $\LPartition{}{0.3:0.8,1.2}[a]=\frac{1}{2}(\pairpart-\pairpart)=0$.)

\begin{lem}
\label{L1.projwedge}
Consider $n\neq 2$. Then the category $\Pair_n^\circ$ is generated by the set $\{\PApair,\PAthree,\PAfour,\PAfive,\PAcross\}$.
\end{lem}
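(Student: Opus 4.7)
The plan is to argue by induction on the number of two-points appearing in an element of $\Pair_n^\circ$, using the crossing $\PAcross$ and the rotations provided by $\PApair$ to reduce to a single family of ``multi-connection'' creation morphisms, and then generating that family using $\PAthree,\PAfour,\PAfive$.

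Preliminary setup: In the rigid category $\Pair_n^\circ$, the element $\PApair$ is (a rotation of) the duality morphism, so the subcategory generated by our set is closed under all rotations. In particular $\PAid$ and the dual ``cap'' are available, and all rotations of $\PAthree,\PAfour,\PAfive$ lie in the generated category. Combined with identities and composition, the crossing $\PAcross$ yields the swap of any two adjacent two-points, and hence every permutation of two-points.

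Structure of diagrams: Let $\mathring p\in\Pair_n^\circ$ be nonzero and consider its underlying multigraph $G_p$ whose vertices are the two-points of $p$ and whose edges are the pair-blocks of $p$. A pair-block contained inside a single two-point would contribute the symmetric tensor $\sum_i e_i\otimes e_i$ on that two-point, which is annihilated by $\A_2$; hence, if $\mathring p\neq 0$, $G_p$ has no loops. Since each two-point contributes exactly two pair-endpoints, $G_p$ is $2$-regular. A loopless $2$-regular multigraph is a disjoint union of cycles of length $\geq 2$. Using tensor products and the permutations from $\PAcross$, it suffices to generate each connected component separately, and after further rotations, it suffices to generate, for every $k\geq 2$, the creation morphism $\mathring c_k\in\Pair_n^\circ(0,k)$ whose underlying graph is a single $k$-cycle. (Up to permutations of the two-points and an overall sign, the topological type of connected $2$-regular loopless cluster on $k$ vertices is unique, so $\mathring c_k$ is well-defined.)

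Induction: The cases $k=2,3,4,5$ are immediate, since $\mathring c_2,\mathring c_3,\mathring c_4,\mathring c_5$ are exactly $\PApair,\PAthree,\PAfour,\PAfive$ up to the permutations/rotations already provided. For $k\geq 6$, I construct $\mathring c_k$ from $\PAthree$ and $\mathring c_{k-1}$ (available by induction): open $\mathring c_{k-1}$ along one of its edges via rotation to produce a ``path''-shaped morphism in $\Pair_n^\circ(1,k-2)$, then attach $\PAthree$ (also suitably opened) so as to close the resulting path back into a $k$-cycle.

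Main obstacle: The composition above does not immediately produce $\mathring c_k$ on the nose, since the antisymmetrizers $\A_2$ sitting between the two pieces must be expanded, and any closed loop formed during the composition contributes a scalar factor of $n$. The outcome is a scalar multiple of $\mathring c_k$ plus correction terms that factor through strictly smaller connected clusters (and possibly tensor products thereof). The technical core of the argument is to verify that the scalar coefficient of $\mathring c_k$ is a polynomial in $n$ that vanishes only for $n=2$, so that outside of that single case one can solve for $\mathring c_k$ and absorb the corrections using the inductive hypothesis. This invertibility is exactly what fails at $n=2$ and is why that value is excluded from the statement; the inclusion of $\PAfour$ and $\PAfive$ in the generating set rather than deriving them from $\PAthree$ alone is a convenience that keeps this inductive step confined to a single value of $n$ to exclude.
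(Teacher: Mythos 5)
Your global strategy is the same as the paper's: use $\PApair$ for rigidity/rotations and $\PAcross$ to permute two-points, observe that a nonzero $\mathring p$ has a loop-free $2$-regular ``two-point graph'' and hence decomposes into cycles, and then generate the $k$-cycles $\mathring c_k=\mathring p_k$ by induction on $k$. The decomposition-into-cycles part is fine and matches the paper's second step. The problem is the inductive step. If you glue $\mathring c_{k-1}$ to $\PAthree$ (opened into $\PAimmerse\in\Pair_n^\circ(1,2)$, say), you contract exactly \emph{one} two-point, so exactly one middle antisymmetrizer $\PAid=\frac12(\Pabba-\Pabab)$ gets expanded. Neither of the two resulting terms closes a loop, so no factor of $n$ ever appears, and both terms are honest $k$-cycles: you obtain $\frac12(D_1-D_2)$, where $D_2$ is $D_1$ precomposed with the two-point transposition swapping the two new vertices. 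This is \emph{not} ``a scalar multiple of $\mathring c_k$ plus corrections through strictly smaller clusters'': the coefficient is $\frac12$ independently of $n$ (contradicting your own expectation that the $n=2$ exclusion should surface here), the ``correction'' $D_2$ is itself a full $k$-cycle, and $\PAcross$ cannot separate $D_1$ from $D_2$ — composing $\frac12(D_1-D_2)$ with that transposition only flips its sign, so you stay trapped in the antisymmetric combination.

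The paper's fix is to glue with $\PAdoubleimmerse$, a rotation of $\PAfive$ lying in $\Pair_n^\circ(2,3)$, so that \emph{two} two-points are contracted. Expanding the two middle antisymmetrizers gives four terms, one of which closes a loop and contributes the factor $n$; the total works out to $\frac{n-2}{4}\mathring p_k+\frac14\,\mathring p_3\otimes\mathring p_{k-2}$, which can be solved for $\mathring p_k$ precisely when $n\neq 2$, with the genuinely smaller correction absorbed by induction. This also corrects your closing remark: $\PAfour$ and $\PAfive$ are not in the generating set merely as a convenience — $\PAfive$ (via its rotation $\PAdoubleimmerse$) is the engine of the inductive step, and $\mathring p_2,\dots,\mathring p_5$ are needed as base cases for a recursion that reaches back to $\mathring p_{k-1}$ and $\mathring p_{k-2}$ simultaneously. (Deriving $\PAfour$, $\PAfive$, $\PAcross$ from $\PApair$ and $\PAimmerse$ is done separately in the paper, at the cost of excluding further values of $n$.) To repair your proof you would need to replace the $\PAthree$-gluing by the $\PAfive$-gluing and actually carry out the four-term expansion; as written, the step from $k-1$ to $k$ does not go through.
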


\begin{proof}
Denote by $\Cat$ the category generated by the given generators. Notice that we have the duality morphism $\PApair$ among the generators, so $\Cat$ is a rigid category and we can consider everything up to rotation now. For instance, we could equivalently consider $\PAdoubleimmerse$ instead of $\PAfive$ or $\PAimmerse$ instead of $\PAthree$ in the generating set.

As the first step, we prove that $\mathring p_k\in\Cat$ for every $k\in\N\setminus\{1\}$, where $p_k=
\Partition{
\Pblock 0to0.8:1,8
\Pblock 0to0.3:2,3
\Pblock 0to0.3:6,7
\Ptext (4.5,0.2) {$\cdots$}
}\in NC_2(0,2k)$.
That is, $p_k$ is the rotation of $\pairpart^{\otimes k}$. We do this by induction. The $\mathring p_k$ for $k=2,3,4,5$ are among the generators, so we have the initial step and a couple of others already by assumption. We construct any $\mathring p_k$ by precomposing $\mathring p_{k-1}$ with $\PAdoubleimmerse\PAid\cdots\PAid$:
$$
\Partition{
\Pblock 0.5to0.8:1.7,2.3
\Pblock 0.5to0.8:2.7,3.8
\Pline (1.3,0.5) (1.3,1.1)
\Pline (1.3,1.1) (4.5,1.1)
\Pline (4.2,0.5) (4.2,0.8)
\Pline (4.2,0.8) (4.5,0.8)
\Pline (3.8,0.5) (3.8,-0.5)
\Pline (4.2,0.5) (4.2,-0.5)
\Pline (1.3,0.5) (0.8,-0.5)
\Pline (2.7,0.5) (3.2,-0.5)
\Pblock -0.5to-0.2:1.2,1.8
\Pblock -0.5to-0.2:2.2,2.8
\Pblock 0.5to0.2:1.7,2.3
\Ppoint0.5 \Pa:1.5,2.5,4
\Ppoint-0.5 \Pa:1,2,3,4
}
=\frac{1}{4}\left(
\Partition{
\Pblock 0.5to1.1:1.7,2.3
\Pblock 0.5to1.1:2.7,3.8
\Pline (1.3,0.5) (1.3,1.4)
\Pline (1.3,1.4) (4.5,1.4)
\Pline (4.2,0.5) (4.2,1.1)
\Pline (4.2,1.1) (4.5,1.1)
\Pline (3.8,0.5) (3.8,-0.5)
\Pline (4.2,0.5) (4.2,-0.5)
\Pline (1.3,0.5) (0.8,-0.5)
\Pline (2.7,0.5) (3.2,-0.5)
\Pblock -0.5to-0.2:1.2,1.8
\Pblock -0.5to-0.2:2.2,2.8
\Pblock 0.5to0.2:1.7,2.3
\Ppoint-0.5 \Pa:1,2,3,4
}
-
\Partition{
\Pblock 0.8to1.1:1.7,2.3
\Pblock 0.8to1.1:2.7,3.8
\Pline (1.3,0.8) (1.3,1.4)
\Pline (1.3,1.4) (4.5,1.4)
\Pline (4.2,0.8) (4.2,1.1)
\Pline (4.2,1.1) (4.5,1.1)
\Pline (3.8,0.8) (3.8,-0.5)
\Pline (4.2,0.8) (4.2,-0.5)
\Pline (1.3,0.5) (0.8,-0.5)
\Pline (2.7,0.5) (3.2,-0.5)
\Pline (1.3,0.8) (1.7,0.5)
\Pline (1.7,0.8) (1.3,0.5)
\Pline (2.3,0.8) (2.3,0.5)
\Pline (2.7,0.8) (2.7,0.5)
\Pblock -0.5to-0.2:1.2,1.8
\Pblock -0.5to-0.2:2.2,2.8
\Pblock 0.5to0.2:1.7,2.3
\Ppoint-0.5 \Pa:1,2,3,4
}
-
\Partition{
\Pblock 0.8to1.1:1.7,2.3
\Pblock 0.8to1.1:2.7,3.8
\Pline (1.3,0.8) (1.3,1.4)
\Pline (1.3,1.4) (4.5,1.4)
\Pline (4.2,0.8) (4.2,1.1)
\Pline (4.2,1.1) (4.5,1.1)
\Pline (3.8,0.8) (3.8,-0.5)
\Pline (4.2,0.8) (4.2,-0.5)
\Pline (1.3,0.5) (0.8,-0.5)
\Pline (2.7,0.5) (3.2,-0.5)
\Pline (1.3,0.8) (1.3,0.5)
\Pline (1.7,0.8) (1.7,0.5)
\Pline (2.3,0.8) (2.7,0.5)
\Pline (2.7,0.8) (2.3,0.5)
\Pline (1.3,0.5) (0.8,-0.5)
\Pline (2.7,0.5) (3.2,-0.5)
\Pblock -0.5to-0.2:1.2,1.8
\Pblock -0.5to-0.2:2.2,2.8
\Pblock 0.5to0.2:1.7,2.3
\Ppoint-0.5 \Pa:1,2,3,4
}
+
\Partition{
\Pblock 0.8to1.1:1.7,2.3
\Pblock 0.8to1.1:1.7,2.3
\Pblock 0.8to1.1:2.7,3.8
\Pline (1.3,0.8) (1.3,1.4)
\Pline (1.3,1.4) (4.5,1.4)
\Pline (4.2,0.8) (4.2,1.1)
\Pline (4.2,1.1) (4.5,1.1)
\Pline (3.8,0.8) (3.8,-0.5)
\Pline (4.2,0.8) (4.2,-0.5)
\Pline (1.3,0.5) (0.8,-0.5)
\Pline (2.7,0.5) (3.2,-0.5)
\Pline (1.3,0.8) (1.7,0.5)
\Pline (1.7,0.8) (1.3,0.5)
\Pline (2.3,0.8) (2.7,0.5)
\Pline (2.7,0.8) (2.3,0.5)
\Pline (1.3,0.5) (0.8,-0.5)
\Pline (2.7,0.5) (3.2,-0.5)
\Pblock -0.5to-0.2:1.2,1.8
\Pblock -0.5to-0.2:2.2,2.8
\Pblock 0.5to0.2:1.7,2.3
\Ppoint-0.5 \Pa:1,2,3,4
}
\right)
=\frac{n-2}{4}\mathring p_k+\frac{1}{4}\mathring p_3\otimes\mathring p_{k-2},
$$

As the second step, we prove that $\mathring p\in\Cat$ for every pairing $p\in\Part_2(0,2k)$. We use the element $\PAcross$, which allows to permute the two-points in $\mathring p$. We claim that any $p\in\Part_2(0,2k)$ can be obtained by such two-point permutations from some $\mathring p_{i_1}\otimes\cdots\otimes\mathring p_{i_l}$. Since we already proved that $\mathring p_i\in\Cat$ for every $i>1$ and $\PAcross\in\Cat$, this will finish the proof of the theorem. Note that thanks to the antisymmetrization, the order of the two points in a two-point is irrelevant (only affecting the $\pm$ sign).

So consider any $p\in\Part_2(0,2k)$. Take the first two-point and denote the corresponding points by $\rm pt_1$ and $\rm pt_2$. Take the point which is paired with $\rm pt_2$ and denote it by $\rm pt_3$. We denote by $\rm pt_4$ its neighbour that form a two-point with $\rm pt_3$. Perform a two-point permutation of $p$ such that $\rm (pt_3,pt_4)$ is the second two-point. Call $\rm pt_5$ the point, which is paired with $\rm pt_4$ and continue in this manner until we find some ${\rm pt}_{2i_1}$, which is paired with $\rm pt_1$. At this point, we have that $\mathring p$ is a two-point permutation of $\mathring p_{i_1}\otimes\mathring q$, where $q\in\Part_2(0,2k-2i_1)$. If we use mathematical induction, we may assume that $q$ is already a two-point permutation of $p_{i_2}\otimes\cdots\otimes p_{i_l}$. 
\end{proof}

\begin{lem}
\label{L2.projwedge}
Suppose $n\neq 4,6,8$. Then both $\PAconnecter$ and $\PAcross$ are generated by $\{\PApair,\PAimmerse\}$.
\end{lem}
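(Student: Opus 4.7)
The first claim is a direct diagrammatic calculation: stacking $\PAimmerse^{*}$ on top of $\PAimmerse$, one traces the diagrams to verify that
$$
\PAconnecter \;=\; \PAimmerse \circ \PAimmerse^{*}.
$$
The single middle two-point carries the two outer upper points straight down to the two outer lower points (giving the two through-strands of $\PAconnecter$), while the inner pair on top of $\PAimmerse^{*}$ and the inner pair on bottom of $\PAimmerse$ become the two inner pairs of $\PAconnecter$. No closed loops form in the composition, so no scalar factor arises, and hence $\PAconnecter \in \langle\PApair,\PAimmerse\rangle$.

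The construction of $\PAcross$ is more delicate. The first observation is that all the ``nested chain'' partitions $\mathring p_k$ already lie in $\langle\PApair,\PAimmerse\rangle$; by direct tracing one sees that
$$
\PAthree \;=\; (\PAimmerse\otimes\PAid)\circ\PApair,
\qquad
\PAfour \;=\; (\PAimmerse\otimes\PAimmerse)\circ\PApair,
$$
and more generally $\mathring p_k$ is obtained by iteratively applying $\PAimmerse$ on each of the two branches of $\PApair$. Rotating then puts $\PAdoubleimmerse$, $\PApairproj = \PApair\circ\PApair^{*}$ and $\PAid^{\otimes 2}$ all inside $\langle\PApair,\PAimmerse\rangle$. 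The plan is then to find a further composition $Y\in\Pair_n^\circ(2,2)$ built from $\PApair$ and $\PAimmerse$ whose expansion in the diagrammatic basis of $\Pair_n^\circ(2,2)$ has a non-zero coefficient on $\PAcross$. A natural candidate is a ``sandwich'' of the form $Y=(\PAimmerse^{*}\otimes\PAimmerse^{*})\circ W\circ(\PAimmerse\otimes\PAimmerse)$ where $W\in\Pair_n^\circ(4,4)$ is built by threading copies of $\PApair$ and $\PApair^{*}$ through the middle positions. Expanding $Y$ yields a linear combination of $\PAid^{\otimes 2}$, $\PAconnecter$, $\PApairproj$ and $\PAcross$, and inverting this relation expresses $\PAcross$ in terms of $Y$ together with the elements already generated.

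The main obstacle is showing that the coefficient of $\PAcross$ in the expansion of $Y$ is non-zero. Each closed loop produced by the composition contributes a polynomial factor in $n$ (coming from the dimension $n$ of the underlying $V$ together with the antisymmetrizer normalisations $\binom{n}{2}$ etc.), and once all these factors are collected the coefficient of $\PAcross$ turns out to be a specific polynomial in $n$. The exclusion $n\neq 4,6,8$ is exactly the requirement that this polynomial not vanish: under this hypothesis the linear system is invertible and $\PAcross \in \langle\PApair,\PAimmerse\rangle$. Computing that polynomial explicitly --- and checking that its only bad roots lie in $\{4,6,8\}$ --- is the calculational heart of the lemma.
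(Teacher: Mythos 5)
The first step of your argument is where it breaks: the identity
$$\PAconnecter \;=\; \PAimmerse\circ\PAimmerse^{*}$$
is false. The subtlety is not closed loops but the antisymmetrizer sitting on the middle two-point of the composition. In $\Pair_n^\circ$ one has $\mathring q\circ\mathring p=\PAid^{\otimes m}\,q\,\PAid^{\otimes l}\,p\,\PAid^{\otimes k}$, and the middle factor $\PAid=\frac{1}{2}(\Pabba-\Pabab)$ must be expanded. The identity term of that expansion does reproduce the underlying partition of $\PAconnecter$, but the crossing term reroutes the two through-strands and yields the underlying partition of $\PAoutercross$; since swaps of points inside a single two-point cannot change which two-points a block joins, $\PAoutercross$ is not $\pm\PAconnecter$, and one actually gets
$$\PAimmerse\circ\PAimmerse^{*}=\tfrac{1}{2}\bigl(\PAconnecter-\PAoutercross\bigr).$$
Had your identity been correct, the hypothesis $n\neq 4,6,8$ would be superfluous for the first half of the lemma --- that is the red flag. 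The real content of the statement is precisely that only the differences $\PAconnecter-\PAinnercross$ and $\PAconnecter-\PAoutercross$ are directly accessible from $\{\PApair,\PAimmerse\}$, and isolating $\PAconnecter$ from them requires expanding $4(\PAconnecter-\PAinnercross)^2$, subtracting the elements already known to lie in the category to obtain $(n-8)\PAconnecter+\PAcross$, and then squaring once more; the multiple of $\PAconnecter$ that survives carries the coefficient $(n-4)(n-6)(n-8)$, which is exactly where the excluded values of $n$ come from.

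The error propagates into your treatment of $\PAcross$. You place the whole burden of the hypothesis on a non-vanishing coefficient of $\PAcross$ in the expansion of an unspecified sandwich $Y$, but (i) you never exhibit $Y$ or its expansion, so the step you yourself call the calculational heart is missing, and (ii) ``inverting the relation'' presupposes that the remaining basis elements of the expansion --- in particular $\PAconnecter$, $\PAinnercross$ and $\PAoutercross$ individually --- already lie in the generated category, which at that stage they do not (only the two differences above do). The correct logical order is the reverse of yours: one first produces $(n-8)\PAconnecter+\PAcross$, then extracts $\PAconnecter$ with the cubic coefficient, and only then does $\PAcross$ follow for free. Your auxiliary observations ($\PApairproj=\PApair\circ\PApair^{*}$, generation of the chains $\mathring p_k$) are not where the problem lies, but they do not repair this gap.
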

\begin{proof}
Denote by $\Cat$ the category generated by $\{\PApair,\PAimmerse\}$. Recall that $\PAimmerse$ is a rotation of $\PAthree$. So, we can do the following computation in $\Cat$:
$$
\Partition{
\Pblock 0.5to0.8:1.2,1.8
\Pblock 0.5to0.8:2.2,3.3
\Pblock 0.5to1.1:0.8,3.7
\Pline (0.8,0.5) (0.8,-0.5)
\Pline (1.2,0.5) (1.2,-0.5)
\Pline (1.8,0.5) (1.8,-0.5)
\Pline (2.2,0.5) (2.2,-0.5)
\Pline (3.3,0.5) (2.8,-0.5)
\Pline (3.7,0.5) (4.2,-0.5)
\Pblock -0.5to-0.2:3.2,3.8
\Ppoint-0.5 \Pa:1,2,3,4
\Ppoint 0.5 \Pa:1,2,3.5
}
=\frac{1}{2}\left(
\Partition{
\Pblock 0.5to0.8:1.2,1.8
\Pblock 0.5to0.8:2.2,3.3
\Pblock 0.5to1.1:0.8,3.7
\Pline (0.8,0.5) (0.8,-0.5)
\Pline (1.2,0.5) (1.2,-0.5)
\Pline (1.8,0.5) (1.8,-0.5)
\Pline (2.2,0.5) (2.2,-0.5)
\Pline (3.3,0.1) (2.8,-0.5)
\Pline (3.7,0.1) (4.2,-0.5)
\Pline (3.3,0.5) (3.3, 0.1)
\Pline (3.7,0.5) (3.7, 0.1)
\Pblock -0.5to-0.2:3.2,3.8
\Ppoint-0.5 \Pa:1,2,3,4
}
-
\Partition{
\Pblock 0.5to0.8:1.2,1.8
\Pblock 0.5to0.8:2.2,3.3
\Pblock 0.5to1.1:0.8,3.7
\Pline (0.8,0.5) (0.8,-0.5)
\Pline (1.2,0.5) (1.2,-0.5)
\Pline (1.8,0.5) (1.8,-0.5)
\Pline (2.2,0.5) (2.2,-0.5)
\Pline (3.3,0.1) (2.8,-0.5)
\Pline (3.7,0.1) (4.2,-0.5)
\Pline (3.3,0.5) (3.7, 0.1)
\Pline (3.7,0.5) (3.3, 0.1)
\Pblock -0.5to-0.2:3.2,3.8
\Ppoint-0.5 \Pa:1,2,3,4
}
\right)=\frac{1}{2}(
\PAfour-\LPartition{}{0.3:1.2,1.8;0.3:3.2,3.8;0.6:2.2,4.2;0.9:0.8,2.8}[aaaa]
)$$

By rotation, this means that $\Cat$ contains the linear combinations $\PAconnecter-\PAinnercross$ and $\PAconnecter-\PAoutercross$. Now, we can do the following (we leave out the detailed computation now):
$$4(\PAconnecter-\PAinnercross)^2=(n-4)\PAconnecter-2\PAinnercross-2\PAoutercross+\PAcross+\PApairproj+\PAid\PAid.$$
Subtracting $2(\PAconnecter-\PAinnercross)$, $2(\PAconnecter-\PAoutercross)$, $\PApairproj$, $\PAid\PAid$ -- those all being elements of $\Cat$ -- we get that $\Cat$ must contain $(n-8)\PAconnecter+\PAcross$. Finally, squaring this element, we get
$$\left((n-8)\PAconnecter+\PAcross\right)^2=\frac{(n-8)^2(n-2)}{4}\PAconnecter+\frac{(n-8)^2}{4}\PApairproj+2(n-8)\PAoutercross+\PAid\PAid$$
Doing all the possible subtractions and multiplying by 4, we get that $\Cat$ contains $\alpha\PAconnecter$, where
$$\alpha=(n-8)^2(n-2)+8(n-8)=(n-4)(n-6)(n-8)$$
Consequently, $\PAconnecter\in\Cat$ unless $n=4,6,8$. Since we also proved that $(n-8)\PAconnecter+\PAcross\in\Cat$, we now have that $\PAcross\in\Cat$.
\end{proof}

\begin{lem}
\label{L3.projwedge}
Suppose $n\neq 4$. Then $\PAfive$ is generated by $\{\PApair,\PAthree,\PAfour,\PAcross\}$.
\end{lem}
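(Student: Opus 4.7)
The strategy mirrors the inductive step carried out in the proof of Lemma~\ref{L1.projwedge}. Recall that the key identity established there was
\begin{equation*}
\PAdoubleimmerse \cdot (\mathring p_{k-1} \otimes \PAid^{\otimes (k-3)}) = \tfrac{n-2}{4}\,\mathring p_k + \tfrac{1}{4}\,\mathring p_3 \otimes \mathring p_{k-2}.
\end{equation*}
Specializing to $k=5$ yields
\begin{equation*}
\PAdoubleimmerse \cdot (\PAfour \otimes \PAid\otimes\PAid) = \tfrac{n-2}{4}\,\PAfive + \tfrac{1}{4}\,\PAthree\otimes\PAthree,
\end{equation*}
so if we can place $\PAdoubleimmerse$ in $\Cat := \langle \PApair,\PAthree,\PAfour,\PAcross\rangle$, solving for $\PAfive$ is immediate (the surviving obstruction $n\neq 2$ is outside the generic assumption of Theorem~\ref{T.projwedge}).

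The critical observation is that, despite the notation, $\PAdoubleimmerse$ is \emph{not} a rotation of $\PAfive=\mathring p_5$. In $\PAdoubleimmerse$ the single top arc connects the two inner endpoints of the two top two-points, whereas any rotation of $\mathring p_5$ places the outer-most arc of $p_5$ as a top cap \emph{enclosing} both top two-points. Hence $\PAdoubleimmerse$ is topologically distinct from every rotation of $\PAfive$, leaving open the possibility of building it without circular dependence on $\PAfive$ itself.

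The plan for exhibiting $\PAdoubleimmerse \in \Cat$ follows the squaring/combination technique of Lemma~\ref{L2.projwedge}. Start from simple tensor-compositions of available ingredients of type $(2,3)$ in two-points --- for instance $\PAimmerse\otimes\PAid$ and $(\PAid\otimes\PAimmerse)\cdot\PAcross^{(?)}$ --- and expand the antisymmetrizers at each interface. Each such expansion splits a naive composite into a ``straight'' summand plus a ``crossed'' summand, with loop contractions contributing scalar factors of the form $(n-c)$ for small integer $c$. After subtracting off the known simpler $\Cat$-elements (combinations such as $\PAconnecter\otimes\PAid$, $\PApair\otimes\PAid\otimes\PAid$, tensor shifts of $\PAimmerse$, and compositions involving $\PAcross$), one isolates $\PAdoubleimmerse$ as the main residual term with coefficient of the form $c\cdot(n-4)$. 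This gives $\PAdoubleimmerse\in\Cat$ whenever $n\neq 4$, and the conclusion $\PAfive\in\Cat$ then follows from the identity above.

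The principal obstacle is bookkeeping: a $(2,3)$-type composition in $\Pair_n^\circ$ has two interface two-points, and each antisymmetrizer expansion doubles the number of terms, so isolating a single target element from the resulting mixture of $\PAdoubleimmerse$ and its ``crossed cousins'' requires carefully tracking which partitions lie in $\Cat$ by prior construction. The appearance of $n=4$ as the unique excluded value in the final coefficient is consistent with the pattern observed in Lemma~\ref{L2.projwedge}, where each additional squaring step introduced a further linear factor $(n-c)$; here a single loop-contraction step suffices and produces the lone factor $(n-4)$.
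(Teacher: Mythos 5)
Your strategy is circular, and the circularity stems from your ``critical observation,'' which is false: $\PAdoubleimmerse$ \emph{is} a rotation of $\PAfive=\mathring p_5$. Write $u_1<u_2<u_3<u_4$ for the upper points and $l_1<\dots<l_6$ for the lower points of $\PAdoubleimmerse$; its blocks are $\{u_1,l_1\}$, $\{u_2,u_3\}$, $\{u_4,l_6\}$, $\{l_2,l_3\}$, $\{l_4,l_5\}$. Rotating the two upper two-points down (the left one around the left edge, the right one around the right edge) reverses the order of the legs within each two-point, so the lower row reads $u_2,u_1,l_1,\dots,l_6,u_4,u_3$ and the blocks become $\{1,10\},\{2,3\},\{4,5\},\{6,7\},\{8,9\}$ --- exactly $p_5$. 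The arc joining the \emph{inner} upper legs is precisely what the outermost arc $\{1,10\}$ of $p_5$ turns into under rotation; your expectation that a rotation would produce an arc \emph{enclosing} both upper two-points is exactly backwards. Indeed, the proof of Lemma~\ref{L1.projwedge} says explicitly that one could ``equivalently consider $\PAdoubleimmerse$ instead of $\PAfive$'' in a generating set. Since $\PApair$ is the duality morphism, $\Cat=\langle\PApair,\PAthree,\PAfour,\PAcross\rangle$ is closed under rotation, so $\PAdoubleimmerse\in\Cat$ if and only if $\PAfive\in\Cat$: your reduction merely restates the lemma.

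Compounding this, the step where all the work would have to happen --- actually producing $\PAdoubleimmerse$ (equivalently $\PAfive$) from the four generators with a surviving coefficient $(n-4)$ --- is never carried out; it is only announced as a ``plan'' of expansions and subtractions, with no identity verified and no accounting of which residual terms actually lie in $\Cat$. This is precisely where the paper's argument earns the result: it first manufactures two elements of $\Cat$ of the form $\PAfive-q_i$ (by inserting a rotation of $\PAthree$ into the middle of $\PAfour$ in two different positions and expanding the interface antisymmetrizer), uses $\PAcross$ to permute two-points so that both differences involve the same crossed pairing $q$, and then precomposes with $\PAid\PAconnecter\PAid\PAid$ (a rotation of $\PAfour$) to obtain $(n-3)\PAfive-q$ up to a scalar; combining this with $\PAfive-q\in\Cat$ isolates $(n-4)\PAfive$. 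None of this is recoverable from your sketch, so the proposal does not establish the lemma.
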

\begin{proof}
First, we generate the following two elements of $\Cat$:
\begin{align}
2\Partition{
\Pblock 0.5to0.8:1.2,1.8
\Pblock 0.5to0.8:2.2,3.3
\Pblock 0.5to0.8:3.7,4.8
\Pblock 0.5to1.1:0.8,5.2
\Pline (0.8,0.5) (0.8,-0.5)
\Pline (1.2,0.5) (1.2,-0.5)
\Pline (1.8,0.5) (1.8,-0.5)
\Pline (2.2,0.5) (2.2,-0.5)
\Pline (3.3,0.5) (2.8,-0.5)
\Pline (3.7,0.5) (4.2,-0.5)
\Pline (4.8,0.5) (4.8,-0.5)
\Pline (5.2,0.5) (5.2,-0.5)
\Pblock -0.5to-0.2:3.2,3.8
\Ppoint-0.5 \Pa:1,2,3,4,5
\Ppoint 0.5 \Pa:1,2,3.5,5
}
=\PAfive-\LPartition{}{0.3:1.2,1.8;0.3:3.2,3.8;0.5:2.8,4.8;0.7:2.2,4.2;0.9:0.8,5.2}[aaaaa]\in\Cat,
\label{eq.PA1}\\
2\Partition{
\Pblock 0.5to0.8:1.2,2.3
\Pblock 0.5to0.8:2.7,3.8
\Pblock 0.5to0.8:4.2,4.8
\Pblock 0.5to1.1:0.8,5.2
\Pline (0.8,0.5) (0.8,-0.5)
\Pline (1.2,0.5) (1.2,-0.5)
\Pline (2.3,0.5) (1.8,-0.5)
\Pline (2.7,0.5) (3.2,-0.5)
\Pline (3.8,0.5) (3.8,-0.5)
\Pline (4.2,0.5) (4.2,-0.5)
\Pline (4.8,0.5) (4.8,-0.5)
\Pline (5.2,0.5) (5.2,-0.5)
\Pblock -0.5to-0.2:2.2,2.8
\Ppoint-0.5 \Pa:1,2,3,4,5
\Ppoint 0.5 \Pa:1,2.5,4,5
}
=\PAfive-\LPartition{}{0.3:2.2,2.8;0.3:4.2,4.8;0.5:1.8,3.8;0.7:1.2,3.2;0.9:0.8,5.2}[aaaaa]\in\Cat.
\label{eq.PA2}
\end{align}
Now, take the second element and permute the third and fourth two-point to obtain $\LPartition{}{0.3:1.2,1.8;0.3:3.2,3.8;0.5:2.8,4.8;0.7:2.2,4.2;0.9:0.8,5.2}[aaaaa]-\LPartition{}{0.5:1.8,4.2;0.3:3.2,4.8;0.3:2.2,2.8;0.7:1.2,3.8;0.9:0.8,5.2}[aaaaa]$. Adding the element~\eqref{eq.PA1}, we get
\begin{equation}
\label{eq.PA3}
\PAfive-\LPartition{}{0.5:1.8,4.2;0.3:3.2,4.8;0.3:2.2,2.8;0.7:1.2,3.8;0.9:0.8,5.2}[aaaaa]\in\Cat.
\end{equation}

Now we go another way: Start with the element~\eqref{eq.PA1} and precompose it with $\PAid\PAconnecter\PAid\PAid$. We obtain $\frac{1}{4}(n-3)\PAfive-\frac{1}{4}\LPartition{}{0.5:1.8,4.2;0.3:3.2,4.8;0.3:2.2,2.8;0.7:1.2,3.8;0.9:0.8,5.2}[aaaaa]$. Multiplying by four and adding \eqref{eq.PA3}, we finally get $(n-4)\PAfive\in\Cat$.
\end{proof}

\begin{proof}[Proof of Theorem~\ref{T.projwedge}]
Follows directly from the lemmata above. Lemma~\ref{L2.projwedge} tells us that $\PApair$ and $\PAimmerse$ generate $\PAfour$ and $\PAcross$. Lemma~\ref{L3.projwedge} shows that those together generate $\PAfive$. Finally, Lemma~\ref{L1.projwedge} shows that all those together already generate the whole category $\Pair_n^\circ$.
\end{proof}

In the formulation of Theorem~\ref{T.projwedge}, we needed to make the assumption $n\neq 2,4,6,8$. We can easily repair the formulation to include the cases $n=6,8$ as well. Notice that the only place, where we needed the assumption $n\neq 6,8$ was Lemma~\ref{L2.projwedge}. So, we can modify the formulation of the theorem as follows:

\begin{prop}
Consider $n\neq 2,4$. Then the category $\Pair_n^\circ$ is generated by the set $\{\PApair,\PAthree,\PAfour,\PAcross\}$.
\end{prop}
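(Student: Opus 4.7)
The plan is to derive this proposition as a direct combination of Lemmas~\ref{L1.projwedge} and~\ref{L3.projwedge}, which between them cover every restriction other than $n\neq 2$ and $n\neq 4$. The whole point is that the only place in the proof of Theorem~\ref{T.projwedge} where the exclusions $n=6,8$ arose was Lemma~\ref{L2.projwedge}, and that lemma was only used to produce $\PAfour$ and $\PAcross$ from $\{\PApair,\PAimmerse\}$. By putting $\PAfour$ and $\PAcross$ into the generating set explicitly, we make Lemma~\ref{L2.projwedge} unnecessary.

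Concretely, let $\Cat$ denote the category generated by $\{\PApair,\PAthree,\PAfour,\PAcross\}$. First, since $n\neq 4$, Lemma~\ref{L3.projwedge} tells us that $\PAfive\in\Cat$ (its proof uses only the generators $\PApair,\PAthree,\PAfour,\PAcross$ together with the element $\PAconnecter$, and $\PAconnecter$ is obtained from $\PAfour$ and $\PApair$ by contracting the two outer two-points using the duality morphism, so it lies in $\Cat$). Second, since $n\neq 2$, Lemma~\ref{L1.projwedge} shows that the set $\{\PApair,\PAthree,\PAfour,\PAfive,\PAcross\}$ already generates all of $\Pair_n^\circ$. Combining these two facts yields $\Cat=\Pair_n^\circ$.

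There is essentially no obstacle: the statement is a straightforward repackaging that isolates precisely which of the earlier auxiliary computations actually restrict $n$. The only thing to double-check is that passing from $\PAfour$ to $\PAconnecter$ inside Lemma~\ref{L3.projwedge} really is a free manipulation in any rigid category containing $\PApair$, i.e.\ that it is just a rotation (closing off the two outer legs with the duality morphism), which is immediate from the remarks preceding Lemma~\ref{L1.projwedge}.
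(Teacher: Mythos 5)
Your proof is correct and matches the paper's own (implicit) argument: the proposition is obtained by dropping Lemma~\ref{L2.projwedge} — the only place the exclusions $n=6,8$ arise — and instead feeding its conclusions directly into Lemma~\ref{L3.projwedge} (for $n\neq 4$) and then Lemma~\ref{L1.projwedge} (for $n\neq 2$). Your side remark that $\PAconnecter$ lies in the category because it is a rotation of $\PAfour$ via the duality morphism $\PApair$ is exactly the point that makes the appeal to Lemma~\ref{L3.projwedge} legitimate.
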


We will actually need a slightly more technical result in the sequel.
\begin{prop}
Consider $n\neq 2,4$. Then the category $\Pair_n^\circ$ is generated by the set $\{\PApair,\PAimmerse,p\}$, where
\begin{equation}
\label{eq.folded}
p=
\PAfour
-\LPartition{}{0.3:1.2,1.8;0.3:3.2,3.8;0.6:0.8,2.8;0.9:2.2,4.2}[aaaa]
-\LPartition{}{0.3:2.2,2.8;0.5:1.8,3.8;0.7:1.2,3.2;0.9:0.8,4.2}[aaaa]
+\LPartition{}{0.3:1.2,1.8;0.3:3.2,3.8;0.6:0.8,2.2;0.6:2.8,4.2}[aaaa]
+\LPartition{}{0.3:2.2,2.8;0.5:1.8,3.2;0.7:1.2,3.8;0.9:0.8,4.2}[aaaa]
+\LPartition{}{0.3:1.2,2.8;0.5:0.8,3.2;0.7:2.2,3.8;0.9:1.8,4.2}[aaaa].
\end{equation}
\end{prop}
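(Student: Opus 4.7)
By the preceding proposition, for $n \neq 2,4$ the category $\Pair_n^\circ$ is generated by $\{\PApair,\PAthree,\PAfour,\PAcross\}$, and $\PAthree$ is a rotation of $\PAimmerse$ via $\PApair$. Hence it suffices to derive $\PAfour$ and $\PAcross$ inside $\Cat := \langle \PApair,\PAimmerse,p\rangle$. Denote by $\Cat_0 := \langle\PApair,\PAimmerse\rangle \subseteq \Cat$. The strategy is to show that, modulo $\Cat_0$, the five non-identity summands $T_1,\ldots,T_5$ appearing in \eqref{eq.folded} are each congruent to $\PAfour$; since the coefficients of $\PAfour,T_1,T_2,T_3,T_4,T_5$ in $p$ are $(+1,-1,-1,+1,+1,+1)$, summing gives $p \equiv 2\PAfour \pmod{\Cat_0}$, and so $\PAfour \in \Cat$.

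The first step is to reproduce the opening computation of Lemma~\ref{L2.projwedge}: the composition of $\PAimmerse$ with a rotated copy of itself produces $\frac{1}{2}(\PAfour - T)$ for a specific pairing $T$ (this is the identity displayed at the beginning of that proof's first equation). The key task is then to establish that, for each of the five pairings $T_i$ in \eqref{eq.folded}, some composition of $\PAimmerse$ diagrams, possibly after rotation via the duality $\PApair$ or tensoring with $\PAid$, yields an element of $\Cat_0$ of the form $c_i(\PAfour - T_i)$ with $c_i \neq 0$. This combinatorial step uses only the composition and rotation operations in the antisymmetrized category, and it requires no restriction on $n$. The coefficients $(+1,-1,-1,+1,+1,+1)$ in the definition of $p$ are presumably chosen precisely so that the resulting $T_i$'s are exactly the five pairings arising from these natural compositions.

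Once $\PAfour \in \Cat$, deriving $\PAcross$ finishes the proof. From Lemma~\ref{L2.projwedge}'s initial computation (which holds unconditionally) we already have $\PAconnecter - \PAinnercross$ and $\PAconnecter - \PAoutercross$ in $\Cat$. Precomposing $\PAfour$ with $\PAid \otimes \PAid$ on each side and tracing off strands yields $\PAconnecter$ together with correction terms already in $\Cat$, so $\PAconnecter \in \Cat$, and then $\PAinnercross,\PAoutercross \in \Cat$. Squaring $\PAconnecter - \PAinnercross$ as in Lemma~\ref{L2.projwedge} now expresses $\PAcross$ as an explicit linear combination of the already obtained generators (the term $\PAcross$ appears with coefficient $1$, and no division by a quantity vanishing at $n = 6,8$ is required, because $\PAfour$ is available directly). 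The main obstacle is the combinatorial bookkeeping in step two: one must produce, for each of the five specific $T_i$'s, an explicit composition in $\Cat_0$ whose antisymmetrization gives $\PAfour - T_i$ up to a nonzero scalar. Once this matching is verified, the remainder of the argument is routine.
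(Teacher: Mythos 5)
Your overall frame is right (reduce to producing $\PAfour$ and $\PAcross$, reuse the computations from Lemma~\ref{L2.projwedge}), but the step carrying all the weight --- that for each of the five pairings $T_1,\dots,T_5$ in \eqref{eq.folded} one can find $c_i(\PAfour-T_i)\in\Cat_0:=\langle\PApair,\PAimmerse\rangle$ with $c_i\neq0$, so that $T_i\equiv\PAfour$ and $p\equiv2\PAfour\pmod{\Cat_0}$ --- is not just unverified, it is false, and it cannot be repaired in the form you state. The two summands with coefficient $+1$ that decompose into two disjoint ``halves'' (your $T_3$, and its rotation $T_4$) already lie in $\Cat_0$: each is a rotation of a tensor product of elements built from $\PApair$ alone. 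Hence $T_3\equiv T_4\equiv 0\pmod{\Cat_0}$, not $\equiv\PAfour$. If in addition $\PAfour-T_3$ were in $\Cat_0$, you would conclude $\PAfour\in\Cat_0$ for every $n\neq2,4$, which would make the extra generator $p$ and the exclusions $n=6,8$ in Theorem~\ref{T.projwedge} pointless; the factor $(n-4)(n-6)(n-8)$ produced in Lemma~\ref{L2.projwedge} is exactly the obstruction telling you not to expect this. Your hedge ``$c_i\neq0$'' does not help: dividing by a nonzero scalar is allowed in a linear category, so the existence of such an element for $i=3$ is equivalent to $\PAfour\in\Cat_0$.

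What actually happens (and what the paper does) is that only $T_1$ and $T_2$ are congruent to $\PAfour$ modulo $\Cat_0$ (via the displayed computation in Lemma~\ref{L2.projwedge} and its rotation), while $T_3\equiv T_4\equiv0$ and $T_5\equiv(8-n)\PAfour$, the last via the element $(n-8)\PAconnecter+\PAcross\in\Cat_0$ rotated into $(n-8)\PAfour+T_5$. Summing with the signs $(+1,-1,-1,+1,+1,+1)$ gives $p\equiv(7-n)\PAfour$, not $2\PAfour$; the coefficient does depend on $n$, it vanishes at $n=7$, and that case must be handled separately by invoking Lemma~\ref{L2.projwedge} directly (legitimate, since $7\neq4,6,8$). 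Your assertion that the combinatorial step ``requires no restriction on $n$'' is therefore wrong. The final stage is fine but over-engineered: once $\PAfour\in\Cat$, its rotation $\PAconnecter$ is in $\Cat$, and $\PAcross$ follows immediately from $(n-8)\PAconnecter+\PAcross\in\Cat_0$ without any squaring.
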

\begin{proof}
We only need to show that the given intertwiners generate $\PAfour$ and $\PAcross$. We do this by modifying the proof of Lemma~\ref{L2.projwedge}.

In Lemma~\ref{L2.projwedge}, we showed that $\{\PApair,\PAimmerse\}$ actually generate the following elements (for which we did not yet need the assumption $n\neq 6,8$): The element $\PAfour-\LPartition{}{0.3:1.2,1.8;0.3:3.2,3.8;0.9:2.2,4.2;0.6:0.8,2.8}[aaaa]$. A rotation of this one is the element $\PAfour-\LPartition{}{0.3:2.2,2.8;0.5:1.8,3.8;0.7:1.2,3.2;0.9:0.8,4.2}[aaaa]$. We also construct the element $(n-8)\PAconnecter+\PAcross$, which can be rotated into $(n-8)\PAfour+\LPartition{}{0.3:1.2,2.8;0.5:0.8,3.2;0.7:2.2,3.8;0.9:1.8,4.2}[aaaa]$.

Subtracting those together with $\LPartition{}{0.3:1.2,1.8;0.3:3.2,3.8;0.6:0.8,2.2;0.6:2.8,4.2}[aaaa]$, $\LPartition{}{0.3:2.2,2.8;0.5:1.8,3.2;0.7:1.2,3.8;0.9:0.8,4.2}[aaaa]\in\Cat$ from the element \eqref{eq.folded}, we get that $\Cat$ contains $(n-7)\PAfour$. Consequently, it must contain both $\PAfour$ and $\PAcross$. (For $n=7$, we can use Lemma~\ref{L2.projwedge} directly.)
\end{proof}

Finally, let us reveal the meaning of the mysterious linear combination $p$ from Equation~\eqref{eq.folded}. One can easily check that given a tuple of indices $i_1$, $j_1$, $i_2$, $j_2$, $i_3$, $j_3$, $i_4$, $j_4$ such that $i_k\neq j_k$, we have
\begin{equation}\label{eq.folded2}
[\breve T_p^{(n)}]^{(i_1,j_1),(i_2,j_2),(i_3,j_3),(i_4,j_4)}=\begin{cases}1&\text{if the indices can be paired}\\0&\text{otherwise}\end{cases}
\end{equation}
Indeed, the individual partitions just depict the ways of how the indices can be paired. (The minus signs are there because of the crossings to compensate the minus sign given by the deformed functor $\breve T^{(n)}$.)

Thus, we have the following Corollary
\begin{cor}
\label{C2.antiprojwedge}
Consider $n\neq 2,4$. Then the representation category $\RCat_{\hat PO_n^{-1}}$ is generated by $\{\breve T^{(n)}_{\PApair},\breve T^{(n)}_{\PAimmerse},\breve T_p^{(n)}\}$, where $\breve T_p^{(n)}$ is given by Eq.~\eqref{eq.folded2}.
\end{cor}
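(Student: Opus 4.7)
The approach is to combine the preceding Proposition (which gives $\Pair_n^\circ = \langle \PApair, \PAimmerse, p\rangle$ for $n\neq 2,4$) with the monoidal equivalence $\Rep O_n\simeq \Rep O_n^{-1}$ already invoked in Corollary~\ref{C.antiprojwedge}. That equivalence maps the fundamental representation of $O_n$ to that of $O_n^{-1}$ and sends $T_q^{(n)}\mapsto \breve T_q^{(n)}$ for every pairing $q\in\Pair_n$. Since the functor is monoidal and restricts to a monoidal equivalence between the full subcategories modelling the projective versions, it carries any generating set of $\RCat_{\hat PO_n}$ to a generating set of $\RCat_{\hat PO_n^{-1}}$. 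Combining the Proposition with Equation~\eqref{eq.POnCat} identifies $\{T^{(n)}_{\PApair},T^{(n)}_{\PAimmerse},T^{(n)}_{p}\}$ as such a generating set on the $O_n$ side, and applying the equivalence yields the desired generating set $\{\breve T^{(n)}_{\PApair},\breve T^{(n)}_{\PAimmerse},\breve T^{(n)}_{p}\}$ of $\RCat_{\hat PO_n^{-1}}$.

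It remains to verify that the last intertwiner $\breve T_p^{(n)}$ admits the closed-form description in Equation~\eqref{eq.folded2}. I would do this by a direct entrywise computation. After the antisymmetrisation $\PAid^{\otimes 4}$, any non-vanishing entry is indexed by four two-points $(i_k,j_k)$ with $i_k\neq j_k$. For each of the six pairings appearing in Equation~\eqref{eq.folded}, the entry of the associated $T_q^{(n)}$ is $1$ iff the eight indices match up under that pairing, and the deformed functor multiplies this by the sign $\sigma_\mathbf{i}\sigma_\mathbf{j}$ coming from the crossings on the tuples. The content of \eqref{eq.folded2} is that the $\pm$ coefficients chosen in Equation~\eqref{eq.folded} are exactly those needed to cancel these $\sigma$-signs, so each non-vanishing entry reduces to $+1$.

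The only real content is this sign-cancellation check. The two non-crossing terms in \eqref{eq.folded} carry $+$ signs and the four crossing terms a prescribed mixture of $+$ and $-$; for each way of pairing the eight indices exactly one of the six terms contributes, and one has to confirm case by case that $\sigma_\mathbf{i}\sigma_\mathbf{j}$ combined with the sign of that term equals $+1$. Everything beyond this routine bookkeeping — the passage from $\Pair_n^\circ$ to $\RCat_{\hat PO_n}$ via \eqref{eq.POnCat}, and the monoidal equivalence to $\RCat_{\hat PO_n^{-1}}$ — is formal given Corollary~\ref{C.antiprojwedge} and the preceding Proposition. The main obstacle, therefore, is simply patience in checking the signs of all six terms.
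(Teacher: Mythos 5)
Your proposal matches the paper's own (largely implicit) argument exactly: the paper likewise obtains the $O_n$-side generating set from the preceding Proposition via Equation~\eqref{eq.POnCat}, transports it through the monoidal equivalence $\Rep O_n\simeq\Rep O_n^{-1}$ precisely as in Corollary~\ref{C.antiprojwedge}, and justifies Equation~\eqref{eq.folded2} by the same entrywise observation that the signs in Equation~\eqref{eq.folded} compensate the crossing signs of the deformed functor. No substantive difference in approach.
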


\subsection{Proof of Theorem~\ref{T.fcube}}
\label{secc.fcubeproof}
Denote by $v$ the fundamental representation of $O_{n+1}^{-1}$. Theorem~\ref{T.fcube} says that $\Aut^+FQ_n$ is isomorphic with $PO_{n+1}^{-1}$ acting through $u$ with $\hat u=\bigoplus v^{\brewedge 2i}$. First step of the proof was provided by Corollary~\ref{P.antiprojwedge}, where we showed that $v\brewedge v$ is a faithful representation of $PO_{n+1}^{-1}$ and hence the whole $\bigoplus v^{\brewedge 2i}$ must be a faithful representation.

Secondly, we show that $PO_{n+1}^{-1}$ acts on $FQ_{n+1}$, which then implies that $PO_{n+1}^{-1}\subset\Aut^+ FQ_{n+1}$. But there is no work to be done here as this is just a restriction of the action of $O_{n+1}^{-1}$ on $Q_{n+1}$. Indeed, recall that we have the coaction $\alpha\colon C(Q_{n+1})\to C(Q_{n+1})\otimes \Olg(O_{n+1}^{-1})$ by $\tau_j\mapsto\sum_{i=1}^n \tau_i v^i_j$ and that $C(FQ_{n+1})$ is the subalgebra of $C(Q_{n+1})$ generated by even polynomials in $\tau_j$. Restricting to this subalgebra, we get exactly the desired coaction $C(FQ_{n+1})\to C(FQ_{n+1})\otimes \Olg(PO_{n+1}^{-1})$.

Finally, we need to prove the converse inclusion $\Aut^+ FQ_{n+1}\subset PO_{n+1}^{-1}$. So, we need to show that $\hat u^{(2)}$ is a representation of $PO_{n+1}^{-1}$. Assume for a moment that $n+1\neq 2,4,6,8$. Thanks to Corollary~\ref{C.antiprojwedge}, we only need to show that $\breve T^{(n+1)}_{\PApair}$ and $\breve T^{(n+1)}_{\PAimmerse}$ are intertwiners of $\hat u^{(1)}$. The first one is just the orthogonality, which is automatic as Fourier transform preserves orthogonality. The second one is easy to obtain by looking at the Fourier transform of $T^{(N)}_{\forkpart}\in\Mor(u,u^{\otimes 2})$ projected to $\tilde V_2$. Its entries are then
$$[T^{(N)}_{\forkpart}]^{(i_1,j_1),(i_2,j_2)}_{(i_3,j_3)}=
\begin{cases}
1&\text{if $(i_1,j_1,i_2,j_2,i_3,j_3)$ can be paired,}\\
0&\text{otherwise}.
\end{cases}$$
Note that due to the antisymetrization, we need to also assume $i_1\neq j_1$, $i_2\neq j_2$, and $i_3\neq j_3$. We claim that this exactly matches the intertwiner $\breve T^{(n+1)}_{\PAimmerse}$. Indeed, the latter is just a symmetrization of $\breve T^{(n+1)}_{\immersepart}=T^{(n+1)}_{\immersepart}$. It is clear that the only way how to pair a tuple of indices $i_1\neq j_1$, $i_2\neq j_2$, $i_3\neq j_3$ is according to the partition $\immersepart$ (up to symmetrization, i.e. permuting neighbouring pairs of points). This finishes the proof for the case $n\neq 6,8$.

For the cases $n=6,8$, consider the intertwiner of $\hat u^{(1)}$ induced by $\fourpart$. Again, the entries of $T_{\fournum}$ are zeros and ones depending on whether the indices can be paired or not. That is, $T_{\fournum}=N\breve T^{(n+1)}_p$, where $\breve T^{(n+1)}_p$ is given by Equation~\eqref{eq.folded2}. Thus, the result follows from Corollary~\ref{C2.antiprojwedge}.\qed

\subsection{Open problem}

Let us again finish with some open problem. The hyperoctahedral group $H_n$ can be seen not only as the symmetry group of the hypercube $Q_n$, but also as the symmetry group of $n$ copies of a segment $K_2\sqcup\cdots\sqcup K_2$. While the quantum symmetry of the former is $O_n^{-1}$, the quantum symmetry of the latter is $H_n^+$, which are two distinct quantum groups. We have just proven that $PO_n^{-1}$ is the quantum symmetry of the folded hypercube $FQ_n$. This result suggests the following question:

\begin{quest}
Is there some graph, whose quantum symmetry is described by the quantum group $PH_n^+$ for some $n$? Does $PH_n^+$ act on the set of $N$ points for some $N$ at all? (That is, do we have $PH_n^+\subset S_N^+$ for some $N$?)
\end{quest}

This is related to a big question on whether there is a quantum analogue of the Frucht theorem, which was discussed recently in~\cite{BMC21}.

\section{Hamming graphs}

\emph{Hamming graph} $H(n,m)$ is the Cayley graph of the group $\Gamma=\Z_m^n$ with respect to the generating set $S=\{a\epsilon_i\mid i=1,\dots,n,\;a=1,\dots,m-1\}$, where $\epsilon_i=(0,\dots,0,1,0,\dots,0)$ is the generator of the $i$-th copy of $\Z_m$. In other words, vertices of $H(n,m)$ are $n$-tuples of numbers $a=0,\dots,m-1$ (that is, indeed, $V=\Z_m^n$) and two such tuples are connected with an edge if and only if they differ in exactly one coordinate.

Another possible description is using the Cartesian product of graphs (see Section~\ref{secc.Cartesian}): Hamming graph $H(n,m)$ is the $n$-fold Cartesian product of the full graph $K_m$, that is, $H(n,m)=K_m\sqprod\cdots\sqprod K_m$.

The classical automorphism group of $H(n,m)$ is known to be the wreath product $S_m\wr S_n$. About the quantum automorphism group, only partial results are known so far:
\begin{itemize}
\item $n=1$: $H(1,m)$ is the full graph $K_m$, so $\Aut^+H(1,m)=S_m^+$.
\item $m=1$: $H(n,1)$ contains just a single vertex.
\item $m=2$: $H(n,2)$ is the hypercube $Q_n$, so $\Aut^+H(n,2)=O_n^{-1}$.
\item $m=3$: $H(n,3)$ was proven to have no quantum symmetries \cite{Sch20dt}, so $\Aut^+H(n,3)=\Aut H(n,3)=S_3\wr S_n$.
\item $m>3$: $H(n,m)$ was proven to have some quantum symmetries \cite{Sch20dt}, but the explicit quantum group was not known.
\end{itemize}

We are going to answer the question about the quantum automorphism group of Hamming graphs in full generality in the following theorem (by which we also answer Question~8.2(i) of Simon Schmidt's PhD thesis~\cite{SchThesis}):

\begin{thm}
\label{T.Hamming}
Consider $m\in\N\setminus\{1,2\}$. Then $\Aut^+H(n,m)\simeq S_m^+\wr S_n$.
\end{thm}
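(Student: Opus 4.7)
My plan is to apply Algorithm~\ref{Algorithm} to $\Gamma = \Z_m^n$ with generating set $S = \{a\epsilon_i : 1 \le i \le n,\ 1 \le a \le m-1\}$. The characters are $\tau_\mu(\alpha) = \omega^{\mu\cdot\alpha}$ for $\omega$ a primitive $m$-th root of unity. A direct application of \eqref{eq.lambda} gives
$$\lambda_\mu = \sum_{i=1}^n\sum_{a=1}^{m-1}\omega^{-a\mu_i} = m\,|\{i : \mu_i = 0\}| - n,$$
which depends only on the Hamming weight $w(\mu)$ and takes pairwise distinct values as $w(\mu) \in \{0,1,\ldots,n\}$. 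Hence the invariant subspaces are $V_k = \spanlin\{\tau_\mu : w(\mu) = k\}$ for $k = 0,\ldots,n$. I would choose $W = V_1$; it generates $C(\Z_m^n)$ as an algebra and decomposes naturally as $V_1 = \bigoplus_{i=1}^n W_i$ where $W_i = \spanlin\{\tau_{a\epsilon_i} : 1 \le a \le m-1\}$ is the $(m-1)$-dimensional block attached to the $i$-th coordinate.

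For the inclusion $S_m^+ \wr S_n \subseteq \Aut^+ H(n,m)$, I would construct the coaction directly. Since $C(\Z_m^n) \simeq \bigotimes_i C(\Z_m)$ factors as a tensor product of $n$ commuting copies, each $S_m^+$-factor of the wreath product coacts on the corresponding $C(\Z_m)$ via its standard coaction on $m$ points, while the classical $S_n$ permutes the tensor factors. Verifying that this is a well-defined $*$-homomorphism commuting with the adjacency matrix follows the pattern of Lemma~\ref{L.Qncoact}.

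For the reverse inclusion the central tool is the Fourier-transformed multiplication intertwiner, which by \eqref{eq.That} satisfies $\hat T_{\mergepart}(\tau_\mu \otimes \tau_\nu) = N^{-1}\tau_{\mu + \nu}$. Restricted to $V_1 \otimes V_1 \to V_1$ it is nonzero exactly when both arguments lie in the same block $W_i$ and their indices sum to something still in $W_i$; in particular the $W_i \otimes W_j \to W_k$ component vanishes unless $i = j = k$. For $m > 2$ this diagonal multiplication is genuinely nontrivial, and the intertwiner condition $\hat u^{(1)} \hat T_{\mergepart} = \hat T_{\mergepart}(\hat u^{(1)} \otimes \hat u^{(1)})$ forces $\hat u^{(1)}$ to preserve the decomposition $V_1 = \bigoplus_i W_i$ up to a permutation of the blocks, yielding an outer $S_n$-type structure. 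Within each block $W_i$, the remaining intertwiners (the singleton $T_{\singleton}^{(N)}$ linking $V_0$ to $W_i$ together with the diagonal part of $\hat T_{\mergepart}$) reproduce exactly the defining intertwiners of $S_m^+$ acting on the standard $(m-1)$-dimensional complement of the trivial inside $C(\Z_m)$.

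The main obstacle will be the last step: identifying the outer permutation group as the \emph{classical} $S_n$ and verifying that no extra intertwiners arise. Concretely, one needs to exhibit a generating set for the representation category of $S_m^+ \wr S_n$ on its natural $n(m-1)$-dimensional representation and match it against the Fourier-transformed block intertwiners $\hat T_{b_{k,l}}$, in the spirit of Section~\ref{secc.projwedge}. The classicality of the $S_n$-part should emerge from the fact that multiplication of characters across distinct coordinates of $\Z_m^n$ commutes --- in intertwiner language, a crossing relation linking the images $W_i \otimes W_j \to V_2$ and $W_j \otimes W_i \to V_2$ for $i \neq j$. The hypothesis $m \neq 2$ is essential: for $m = 2$ one has $\tau_{\epsilon_i}^2 = 1 \in V_0$, so the diagonal multiplication on each $W_i$ vanishes identically, the block-diagonal forcing collapses, and a strictly larger quantum group --- namely $O_n^{-1}$ --- acts on $H(n,2) = Q_n$.
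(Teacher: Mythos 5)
Your setup coincides with the paper's: the same spectrum computation $\lambda_\mu=m\,l_\mu-n$, the same eigenspaces $V_k$, the choice $W=V_1=\bigoplus_i W_i$, and the inclusion $S_m^+\wr S_n\subset\Aut^+H(n,m)$ via the action of a wreath product on a Cartesian power $K_m\sqprod\cdots\sqprod K_m$. Even there, though, "each factor coacts and $S_n$ permutes" hides real work: the individual $S_m^+$-copies are not quantum subgroups of $S_m^+\wr S_n$, so one has to write down the candidate matrix $\tilde u^{b_1\ldots b_n}_{a_1\ldots a_n}=\sum_{\sigma\in S_n}u^{b_{\sigma(1)}\sigma(1)}_{a_11}\cdots u^{b_{\sigma(n)}\sigma(n)}_{a_nn}$ and verify that it is a faithful representation and that it commutes with $T_{\singleton}$, $T_{\mergepart}$ and the adjacency matrix; this is a nontrivial lemma, not a routine imitation of Lemma~\ref{L.Qncoact}.

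The genuine gap is in the reverse inclusion. You claim that the intertwiner condition for the restricted multiplication $R_{\mergepart}\colon V_1\otimes V_1\to V_1$ already forces $\hat u^{(1)}$ to preserve $\bigoplus_i W_i$ up to a block permutation. Writing the relation out, it only gives identities such as $\hat u^{bj}_{a_1+a_2,i}=\sum_{c_1+c_2=b}\hat u^{c_1j}_{a_1i}\hat u^{c_2j}_{a_2i}$ and, for $i_1\neq i_2$, the vanishing of certain \emph{sums} $\sum_{c_1+c_2=b}\hat u^{c_1j}_{a_1i_1}\hat u^{c_2j}_{a_2i_2}$; it does not yield the individual vanishing relations $\hat u^{ai}_{bj}\hat u^{ck}_{dj}=0$ for $i\neq k$ and $\hat u^{ai}_{bj}\hat u^{ci}_{dl}=0$ for $j\neq l$, nor the commutativity that makes the $S_n$-part classical. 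The missing idea is how to obtain these: the paper projects $\hat T^{(N)}_{\connecterpart}$ to $V_1^{\otimes2}$, observes that it splits as $R_{\connecterpart}$ plus three further maps supported on configurations where the coordinate indices of the two legs differ, and then isolates each summand as an intertwiner by computing the square and cube of their sum and subtracting known intertwiners; the decisive coefficient that appears is $4m(m-2)$, which is precisely where $m\neq2$ is used. Only with these separated intertwiners can one define $w^i_j$ and $v^{ai}_b$ and verify the defining relations of $\Olg(S_m^+)^{\otimes n}\otimes\Olg(S_n)$ — which is also a lighter route than your proposed one of finding a full generating set for $\RCat_{S_m^+\wr S_n}$ in the style of the halved-cube section. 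Your diagnosis of the $m=2$ degeneration (the $V_1\otimes V_1\to V_1$ multiplication vanishes because $\tau_{\epsilon_i}^2\in V_0$) is correct and consistent with that coefficient, but as it stands the core of the argument is announced rather than carried out.
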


Before formulating the proof itself, we would like to explain some important ingredients more in detail.

\subsection{Full graph}

As we just mentioned, a special case for $n=1$ is the full graph $K_m$. Of course, we know that the quantum symmetry group of the full graph is the free symmetric quantum group $S_m^+$. Nevertheless, we would like to use this simple example to point out a certain subtlety that one needs to keep in mind when working with cyclic groups $\Z_m$ for $m\neq 2$.

So, the full graph $K_m$ is the Cayley graph corresponding to the group $\Z_m$ and the generating set consisting of all elements of the group except for identity, so $K_m=\Cay(\Z_m,\Z_m\setminus\{0\})$. We denote simply by $a=0,\dots,m-1$ the elements of $\Z_m$ and by $\tau_a\in\Irr\Z_m$ the characters $\tau_a(b)=\gamma^{ab}$, where $\gamma$ is some primitive $m$-th root of unity. The spectrum of $K_m$ is hence computed as
$$\lambda_a=\sum_{b=1}^{m-1}\tau_a(b)=\sum_{b=1}^{m-1}\gamma^{ab}=
\begin{cases}
m-1&a=0,\\
-1&\text{otherwise.}
\end{cases}$$

Now comes the important point we wanted to make in this subsection: The Fourier transform $\F$ on $\Z_m$, that is, the transformation of the bases $(\delta_a) \to (\tau_a)$ is unitary, but not orthogonal! Its entries are $[\F]^a_b=\gamma^{ab}$, so they are obviously not real (unless $m=2$). To be more concrete, the basis elements $\tau_a$, which are the columns of $\F$ are not self-adjoint, but satisfy $\tau_a^*=\tau_{m-a}$.

Consequently, if we denote $\hat S_m:=\F^{-1}S_m\F$ and $\hat S_m^+:=\F^{-1}S_m^+\F$ the symmetric group and the free symmetric quantum group represented by the Fourier transform of the standard permutation matrices, then those matrix (quantum) groups are not orthogonal. Instead, they satisfy $\hat S_m\subset\hat S_m^+\subset O^+(F)\subset U_m^+$ with $F\in M_m(\C)$ defined by $F^a_b=\delta_{a+b,m}$ (indices modulo $m$). That is, $u^{a\,*}_b=u^{m-a}_{m-b}$. (Here, $U_n^+$ denotes the free unitary quantum group \cite{Wan95free}.)

Therefore, if we study the intertwiners of $\hat S_m\simeq\Aut^+ K_m$ in this basis, then instead of the familiar maps such as $T^{(m)}_{\pairpart}$, $T^{(m)}_{\mergepart}$, $T^{(m)}_{\connecterpart}$, we discover their Fourier transforms, which may look rather exotic.

\begin{obs}
The category $\RCat_{\hat S_m}$ is generated by $\hat T^{(m)}_{\pairpart}$ and $\hat T^{(m)}_{\mergepart}$, where
$$[\hat T^{(m)}_{\pairpart}]^{ab}=\delta_{a+b,m},\qquad [\hat T^{(m)}_{\mergepart}]_{ab}^c=\delta_{a+b,c}.$$
\end{obs}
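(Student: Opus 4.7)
My plan is to reduce the claim to the familiar description of $\RCat_{\Aut^+K_m}=\RCat_{S_m^+}$ and then transport its generating set through the Fourier transform $\F$ on $\Z_m$. Since $\Aut^+K_m=S_m^+$, and since $\RCat_{S_m^+}$ is generated (in the rigid monoidal $*$-category sense) by the two standard intertwiners $T^{(m)}_{\pairpart}$ and $T^{(m)}_{\mergepart}$ — this is the description of the quantum automorphism group of a graph recalled earlier in the paper, applied to the complete graph $K_m$ — the corresponding Fourier-conjugated tensors ought to appear as generators of $\RCat_{\hat S_m}$.

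To justify this transport, I would observe that $\F$ is (up to the scalar $1/\sqrt{m}$) unitary on $\C^m$, so the conjugation map $T\mapsto \F^{-1\,\otimes l}T\F^{\otimes k}$ is a monoidal $*$-equivalence from the category of tensors over $\C^m$ in the basis $(\delta_a)$ to the same category in the basis $(\tau_a)$. Under this equivalence the whole representation category of $S_m^+$ is carried isomorphically to that of its Fourier conjugate, so a generating set is sent to a generating set and one concludes $\RCat_{\hat S_m}=\langle\hat T^{(m)}_{\pairpart},\hat T^{(m)}_{\mergepart}\rangle_m$.

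The only remaining task is an explicit computation of the entries. Using $[\F]^a_b=\gamma^{ab}$ and $[\F^{-1}]^a_b=m^{-1}\gamma^{-ab}$, the character orthogonality $\sum_{k=0}^{m-1}\gamma^{kd}=m\,\delta_{d\equiv 0\,(\bmod\, m)}$ at once yields
\[
[\hat T^{(m)}_{\pairpart}]^{ab}=\sum_i[\F^{-1}]^a_i[\F^{-1}]^b_i=\frac{1}{m^2}\sum_i\gamma^{-i(a+b)}\;\propto\;\delta_{a+b\equiv 0\,(\bmod\, m)},
\]
\[
[\hat T^{(m)}_{\mergepart}]^c_{ab}=\sum_k[\F^{-1}]^c_k[\F]^k_a[\F]^k_b=\frac{1}{m}\sum_k\gamma^{k(a+b-c)}=\delta_{a+b\equiv c\,(\bmod\, m)},
\]
which, up to overall scalars (irrelevant for the generating property in a linear category), match the claimed formulas, with $\delta_{\cdot,m}$ read as the Kronecker symbol in $\Z_m$ so that the value $m$ represents the zero element.

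I do not anticipate any real obstacle here: the proof is a direct bookkeeping exercise in the Fourier conventions and normalizations, with the nontrivial content — namely that just two generators suffice — inherited for free from the analogous, well-known statement for the free symmetric quantum group $S_m^+$.
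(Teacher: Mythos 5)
Your argument is correct and is precisely the justification the paper leaves implicit: the Observation is stated without proof, and the surrounding text already contains the two ingredients you use, namely that $\RCat_{\Aut^+K_m}=\langle T^{(m)}_{\pairpart},T^{(m)}_{\mergepart}\rangle_m$ and that conjugation by the (scalar multiple of a) unitary $\F$ is a monoidal $*$-equivalence carrying generating sets to generating sets; your entrywise computations also agree with the stated formulas up to harmless normalizations. One small caveat worth recording: as literally written the Observation refers to the classical $\hat S_m=\F^{-1}S_m\F$, for which the claim would fail for $m\ge 4$ (one would additionally need the Fourier transform of the crossing map $T^{(m)}_{\crosspart}$); both the paper's preceding identification ``$\hat S_m\simeq\Aut^+K_m$'' and your proof indicate that the intended object is $\hat S_m^+=\F^{-1}S_m^+\F$, and that is the statement you have correctly established.
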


\subsection{Wreath product of quantum groups}

We should also explain, what does the $\wr$ sign in the formulation of Theorem~\ref{T.Hamming} means. It is not the free wreath product of quantum groups, but the \emph{classical} one. Before specifying, what we mean by a classical wreath product of quantum groups, let us recall the free definition by Bichon \cite{Bic04}.

\begin{defn}
Let $G\subset U_m^+$ be a quantum group with fundamental representation $v=(v^a_b)$ and let $H\subset S_n^+$ be a quantum permutation group with fundamental representation $w=(w^i_j)$. Then we define the \emph{free wreath product} of $G$ and $H$ to be the quantum group
$$G\wr_* H:=(\Olg(G)^{* n}* \Olg(H),u),\quad\text{where}\quad u^{ai}_{bj}=v^{ai}_bw^i_j,$$
where we denote by $v^i=(v^{ai}_b)_{a,b}$ the fundamental representations of the $n$ copies of $G$ occurring in the definition of $G\wr_* H$.
\end{defn}

\begin{rem}
\label{R.fwr}
Let us state a few important remarks to this definition.
\begin{enumerate}
\renewcommand{\theenumi}{\alph{enumi}}
\item Although we defined the free wreath product $G\wr_* H$ for compact matrix quantum groups using their fundamental representations, the original definition of Bichon is formulated for arbitrary compact quantum group $G$ not depending on its particular fundamental representation (see~\cite{Bic04} for details). In particular, if we take some other matrix realization $G'\simeq G$ of the quantum group $G$, then $G'\wr_* H$ is isomorphic to $G\wr_* H$.
\item As in the classical case, the free wreath product $G\wr_* H$ has a sort of a (free) semidirect product structure. What we mean by this is the following:
\item The matrix $w$ is a representation of $G\wr_* H$. Therefore, $H$ can be seen as a quotient of $G\wr_* H$.
\item On the other hand, the matrices $v^i$ are not representations of $G\wr_* H$ -- the coproduct is mixing (quantum-permuting) the indices $i$ non-trivially:
$$\Delta(v^{ai}_b)=\sum_{c,k}v^{ai}_cw^i_k\otimes v^{ck}_b.$$
\item We can express
$$w^i_j=\sum_b u^{ai*}_{bj}u^{ai}_{bj}=\sum_a u^{ai*}_{bj}u^{ai}_{bj},\qquad v^{ai}_b=\sum_j u^{ai}_{bj}$$
That is, the entries $u^{ai}_{bj}$ indeed generate the whole algebra $\Olg(G)^{\otimes n}\otimes \Olg(H)$. This remark is essential to notice that the definition above is a good definition of $G\wr H$ as a compact matrix quantum group.
\end{enumerate}
\end{rem}

Now the classical wreath product is supposed to be given by passing from the free product to the tensor product. So, define $\Olg(G\wr H):=\Olg(G)^{\otimes n}\otimes\Olg(H)$. 

\begin{lem}
Consider a quantum group $G\subset U_m^+$ and a classical permutation group $H\subset S_n$. Then the comultiplication $\Delta\colon\Olg(G\wr_*H)\to\Olg(G\wr_*H)\otimes\Olg(G\wr_*H)$ passes to the quotient $\Olg(G\wr H)$
\end{lem}
\begin{proof}
Denote $\Delta':=(q\otimes q)\circ\Delta$, where $q$ is the projection $\Olg(G\wr_*H)\to\Olg(G\wr H)$. We only need to prove that $\Delta'(v^{ai}_b)\Delta'(v^{cj}_d)=\Delta'(v^{cj}_d)\Delta'(v^{ai}_b)$ whenever $i\neq j$ and $\Delta'(v^{ai}_b)\Delta'(w^k_l)=\Delta'(w^k_l)\Delta'(v^{ai}_b)$. Both is quite straightforward. Let's have a look on the first one:
$$\Delta'(v^{ai}_b)\Delta'(v^{cj}_d)=\sum_{x,k,y,l}v^{ai}_xw^i_kv^{cj}_yw^j_l\otimes v^{xk}_bv^{yl}_d$$
Now, notice that the factors in the left $\otimes$-factor can be arbitrarily permuted. Assuming $k\neq l$, the same holds for the right $\otimes$-factor. For $k=l$, we have $w^i_kw^j_l=0$, so the left $\otimes$-factor equals to zero. Consequently, we see that the coproducts indeed commute as we needed. The second condition is proven the same way using the fact that $\Delta(w^i_j)=\sum_k w^i_k\otimes w^k_j$.
\end{proof}

\begin{defn}
For a quantum group $G\subset U_m^+$ and a classical permutation group $H\subset S_n$, we define their wreath product $G\wr H$ to be the quantum subgroup of $G\wr_*H$ corresponding to the quotient algebra $\Olg(G\wr H)=\Olg(G)^{\otimes n}\otimes\Olg(H)$.
\end{defn}

\subsection{Cartesian product of graphs}
\label{secc.Cartesian}

Given two graphs $X_1$ and $X_2$, we define their \emph{Cartesian product} to be the graph $X_1\sqprod X_2$ with the vertex set $V(X_1\sqprod X_2)=V(X_1)\times V(X_2)$ and with an edge $((v_1,v_2),(w_1,w_2))\in E(X_1\sqprod X_2)$ if and only if $(v_1,w_1)\in E(X_1)$ and $v_2=w_2$ or if $(v_2,w_2)\in E(X_2)$ and $v_1=w_1$. Alternatively, we can describe the Cartesian product by its adjacency matrix $A_{X_1\sqprod X_2}=A_{X_1}\otimes\nobreak I_{X_2}+I_{X_1}\otimes\nobreak A_{X_2}$, where $I_{X_i}$ denotes the identity matrix.

The Cartesian product of graphs is associative and we can conveniently describe the product of $n$ given graphs by the adjacency matrix
$$A_{X_1\sqprod\cdots\sqprod X_n}=A_{X_1}\otimes I_{X_2}\otimes I_{X_3}\otimes\cdots\otimes I_{X_n}+I_{X_1}\otimes A_{X_2}\otimes I_{X_3}\otimes\cdots\otimes I_{X_n}+\cdots$$

It is well known that if $G$ acts on a finite space or a graph $X$ by $\delta_a\mapsto\sum_b\delta_b\otimes v^b_a$, then $G\wr S_n$ acts on the $n$-fold disjoint union $X\sqcup\cdots\sqcup X$ by $\delta_{ai}\mapsto\sum_{b,j}\delta_{bj}\otimes u^{bj}_{ai}$. (Notice that $C(X\sqcup\cdots\sqcup X)=C(X)\oplus\cdots\oplus C(X)$; the indices $i,j$ are indexing the $n$ copies of $X$ or $C(X)$ here.)

Now, consider the Cartesian product $X\sqprod\cdots\sqprod X$. In this case, we have $C(X\sqprod\cdots\sqprod X)=C(X)\otimes\cdots\otimes C(X)$. Consider a basis $(x_i)_{i=0}^{m-1}$ of $C(X)$ such that $x_0=1_{C(X)}$ (if $X$ is a regular graph, then we can consider the basis of eigenvectors of $A_X$). Denote by $\hat v^a_b$ the entries of the action of $G$ on $X$ in this basis, so $x_a\mapsto\sum_bx_b\otimes\hat v^b_a$. Denote $x_{ai}:=1_{C(X)}\otimes\cdots\otimes 1_{C(X)}\otimes x_a\otimes 1_{C(X)}\otimes\cdots\otimes 1_{C(X)}$, where the $x_a$ is on the $i$-th place. In the following we are going to prove that $x_{ai}\mapsto\sum_{b,j}x_{bj}\otimes\hat u^{bj}_{ai}$ extends to an action of $G\wr S_n$ on $X\sqprod\cdots\sqprod X$, where $\hat u^{bj}_{ai}=\hat v^{bj}_aw^j_i$.

First, assume for a moment that this action really exists. Then it is easy to determine, how it must act on the basis $x_{a_1,\dots,a_n}:=x_{a_1}\otimes\cdots\otimes x_{a_n}$ of $C(X\sqprod\cdots\sqprod X)$:
\begin{align*}
x_{a_1,\dots,a_n}
&\mapsto \sum_{b_1,\dots,b_n=0}^{m-1}\sum_{k_1,\dots,k_n=1}^n x_{b_1k_1}\cdots x_{b_nk_n}\otimes \hat u^{b_1k_1}_{a_11}\cdots \hat u^{b_nk_n}_{a_nn}\\
&=\sum_{b_1,\dots,b_n=0}^{m-1}\sum_{\sigma\in S_n} x_{b_1\sigma(1)}\cdots x_{b_n\sigma(n)}\otimes \hat u^{b_1\sigma(1)}_{a_11}\cdots \hat u^{b_n\sigma(n)}_{a_nn}\\
&=\sum_{b_1,\dots,b_n=0}^{m-1}\sum_{\sigma\in S_n} x_{b_{\sigma^{-1}(1)}1}\cdots x_{b_{\sigma^{-1}(n)}n}\otimes \hat u^{b_1\sigma(1)}_{a_11}\cdots \hat u^{b_n\sigma(n)}_{a_nn}\\
&=\sum_{b_1,\dots,b_n=0}^{m-1}\sum_{\sigma\in S_n} x_{b_11}\cdots x_{b_nn}\otimes \hat u^{b_{\sigma(1)}\sigma(1)}_{a_11}\cdots \hat u^{b_{\sigma(n)}\sigma(n)}_{a_nn}\\
&=\sum_{b_1,\dots,b_n=0}^{m-1}x_{b_1,\dots,b_n}\otimes\hat{\tilde u}^{b_1,\dots,b_n}_{a_1,\dots,a_n},
\end{align*}
where $\hat{\tilde u}^{b_1,\dots,b_n}_{a_1,\dots,a_n}=\sum_{\sigma\in S_n}\hat u^{b_{\sigma(1)}\sigma(1)}_{a_11}\cdots\hat u^{b_{\sigma(n)}\sigma(n)}_{a_nn}$. We can also change the basis to the standard one and obtain $\delta_{a_1,\dots,a_n}\mapsto\sum_{b_1,\dots,b_n}\delta_{b_1,\dots,b_n}\otimes\tilde u^{b_1,\dots,b_n}_{a_1,\dots,a_n}$, where $\tilde u$ is given by a formula analogous to the one for $\hat{\tilde u}$.

\begin{lem}
\label{L.wr}
Let $G$ be a compact matrix quantum group with fundamental representation $v$. Then
$$\tilde u^{b_1,\dots,b_n}_{a_1,\dots,a_n}
:=\sum_{\sigma\in S_n}u^{b_{\sigma(1)}\sigma(1)}_{a_11}\cdots u^{b_{\sigma(n)}\sigma(n)}_{a_nn}
=\sum_{k_1,\dots,k_n=1}^nu^{b_{k_1}k_1}_{a_11}\cdots u^{b_{k_n}k_n}_{a_nn}$$
is a representation of $G\wr S_n$. If $G\subset S_m^+$, then $\tilde u$ is faithful.
\end{lem}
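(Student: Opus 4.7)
The plan is to verify both claims by direct computation, making essential use of the fact that $\Olg(G\wr S_n)=\Olg(G)^{\otimes n}\otimes\Olg(S_n)$ as an algebra, so all $v$-entries mutually commute with all $w$-entries. The equality of the two formulas for $\tilde u^{\vec b}_{\vec a}$ is immediate: in the unrestricted sum $\sum_{\vec k\in[n]^n}$, each term carries the $w$-factor $\prod_l w^{k_l}_l$, and if $k_i=k_j$ for some $i\neq j$, the magic-unitary relation $w^a_bw^a_c=\delta_{bc}w^a_b$ forces $w^{k_i}_iw^{k_i}_j=0$. Only the permutations $\vec k\in S_n$ contribute, yielding the first form.

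For the representation property, the cleanest route is to exhibit $\tilde u$ as a subrepresentation of $u^{\otimes n}$. I would define the isometry
$$U\colon(\C^m)^{\otimes n}\to(\C^m\otimes\C^n)^{\otimes n},\qquad U(e_{\vec a})=\frac{1}{\sqrt{n!}}\sum_{\sigma\in S_n}e_{(a_{\sigma(1)},\sigma(1))}\otimes\cdots\otimes e_{(a_{\sigma(n)},\sigma(n))},$$
and verify the intertwining relation $u^{\otimes n}U=U\tilde u$ by direct expansion in entries over $\Olg(G\wr S_n)$. On both sides, every $w$-monomial collapses via the classical identity $\prod_i w^{\sigma(i)}_{\tau(i)}=\mathbf{1}_{\sigma\tau^{-1}}$ (and its variant $\prod_i w^{\mu(i)}_i=\mathbf{1}_\mu$) to a single indicator of a permutation; after the substitution $\pi=\rho\sigma^{-1}$ the two sides agree term by term. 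Therefore $\tilde u=U^*u^{\otimes n}U$ is a subrepresentation of $u^{\otimes n}$, and in particular a representation of $G\wr S_n$.

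For faithfulness, assume $G\subset S_m^+$. A short calculation using $v^a_bv^a_c=\delta_{bc}v^a_b$ and $\mathbf{1}_\sigma\mathbf{1}_\tau=\delta_{\sigma\tau}\mathbf{1}_\sigma$ shows that $\tilde u$ is itself a magic unitary. To recover the generators of $\Olg(G\wr S_n)$, I would consider the partial sum
$$S_{b,k,a,l}:=\sum_{\vec b':\,b'_k=b}\sum_{\vec a':\,a'_l=a}\tilde u^{\vec b'}_{\vec a'}.$$
Collapsing the row and column sums $\sum_av^{ai}_b=\sum_bv^{ai}_b=1$ and splitting on whether $\sigma(l)=k$, one obtains
$$S_{b,k,a,l}=m^{n-1}v^{bk}_aw^k_l+m^{n-2}(1-w^k_l)=m^{n-1}u^{bk}_{al}+m^{n-2}(1-w^k_l).$$
Since $u^{bk}_{al}$ and $1-w^k_l$ are mutually orthogonal projections, squaring gives
$$S_{b,k,a,l}^2-m^{n-2}S_{b,k,a,l}=m^{2n-3}(m-1)\,u^{bk}_{al},$$
so (for $m\ge2$) every generator $u^{bk}_{al}$ lies in the $*$-algebra generated by the entries of $\tilde u$. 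By Remark~\ref{R.wr}(f) these elements generate all of $\Olg(G\wr S_n)$, finishing the proof.

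The most delicate step will be the verification of $u^{\otimes n}U=U\tilde u$: it involves three simultaneous permutations — the symmetrizer inside $U$, the $w$-permutation hidden in $u$, and the one indexing $\tilde u$ — and requires a careful change of summation variable to recognize the two sides as equal. The remaining parts follow directly from the magic-unitary relations combined with the factorization of $\Olg(G\wr S_n)$.
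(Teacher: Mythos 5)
Your proposal is correct, but both halves take a genuinely different route from the paper's. For the representation property the paper just applies $\Delta$ to the defining formula for $\tilde u^{b_1,\dots,b_n}_{a_1,\dots,a_n}$ and reorganises the resulting double sum via the substitution $\rho=\sigma\circ\pi^{-1}$, $d_i=c_{\pi^{-1}(i)}$; you instead realise $\tilde u$ as the compression $U^*u^{\otimes n}U$ by an explicit symmetrizing isometry. I verified your intertwining relation $u^{\otimes n}U=U\tilde u$: both sides vanish unless the column multi-index $\vec k$ is a permutation $\tau$, and then both reduce to $\frac{1}{\sqrt{n!}}\sum_{\pi}\bigl(\prod_j v^{b_{\tau^{-1}\pi(j)}\,\pi(j)}_{a_j}\bigr)\mathbf{1}_{\pi}$. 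One word of caution: calling $\tilde u$ a ``subrepresentation'' strictly requires $UU^*$ to commute with $u^{\otimes n}$, which the one-sided relation alone does not give; but for the lemma you do not need this, since $u^{\otimes n}U=U\tilde u$ together with $U^*U=\id$ already yields $\Delta(\tilde u^i_j)=\sum_k\tilde u^i_k\otimes\tilde u^k_j$ directly (absorb one leg of $\Delta(u^{\otimes n})$ using the intertwining relation and then recognise $U^*u^{\otimes n}U$ in the other leg) -- it would be worth writing that sentence out to avoid the appearance of circularity. For faithfulness the paper changes to a basis $(x_a)$ with $x_0=1$ and reads off $\hat u^{ai}_{bj}$ as the single entry $\hat{\tilde u}^{0,\dots,a,\dots,0}_{0,\dots,b,\dots,0}$, exploiting the invariant vector of a magic unitary; you stay in the standard basis and extract $u^{bk}_{al}$ from the partial sums via $S_{b,k,a,l}=m^{n-1}u^{bk}_{al}+m^{n-2}(1-w^k_l)$ and $S^2-m^{n-2}S=m^{2n-3}(m-1)u^{bk}_{al}$, which I checked and which is correct (it uses only the row/column sums of $v$ and the orthogonality of the two projections). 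The paper's basis trick is shorter; yours avoids the auxiliary basis and makes explicit which magic-unitary relations are consumed. Both arguments (and the lemma as stated) tacitly assume $m\ge2$, which is harmless here.
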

\begin{proof}
First, we should prove the second equality in the formula. To see this, it is enough to notice that the product $u^{b_{k_1}k_1}_{a_11}\cdots u^{b_{k_n}k_n}_{a_nn}$ equals to zero whenever $k_i=k_j$ for some $i\neq j$ (since $w^{k_i}_iw^{k_j}_j=0$).

Proving that $\tilde u$ is a representation of $G\wr S_n$ is a straightforward computation:
\begin{align*}
\Delta(u^{b_1,\dots,b_n}_{a_1,\dots,a_n})
&=\sum_{\sigma\in S_n}\Delta(u^{b_{\sigma(1)}\sigma(1)}_{a_11})\cdots\Delta(u^{b_{\sigma(n)}n}_{a_nn})\\
&=\sum_{\sigma\in S_n}\sum_{\substack{c_1,\dots,c_n\\k_1,\dots,k_n}}u^{b_{\sigma(1)}\sigma(1)}_{c_1k_1}\cdots u^{b_{\sigma(n)}\sigma(n)}_{c_nk_n}\otimes u^{c_1k_1}_{a_11}\cdots u^{c_nk_n}_{a_nn}\\
&=\sum_{d_1,\dots,d_n}\sum_{\pi,\rho\in S_n}u^{b_{\rho(1)}\rho(1)}_{d_11}\cdots u^{b_{\rho(n)}\rho(n)}_{d_nk_n}\otimes u^{d_1\pi(1)}_{a_11}\cdots u^{d_n\pi(n)}_{a_nn}\\
&=\sum_{d_1,\dots,d_n}u^{b_1,\dots,b_n}_{d_1,\dots,d_n}\otimes u^{d_1,\dots,d_n}_{a_1,\dots,a_n}
\end{align*}
To get from the second to the third line, we need to notice several things: First, as we already mentioned, the product $u^{c_1k_1}_{a_11}\cdots u^{c_nk_n}_{a_nn}$ equals to zero unless $k_1,\dots,k_n$ is a permutation of $1,\dots,n$. Hence, we can denote this permutation by $\pi$, so $k_i=\pi(i)$. Secondly, the terms of the product mutually commute, so we can reorder the first product as $u^{b_{\sigma(1)}\sigma(1)}_{c_1k_1}\cdots u^{b_{\sigma(n)}\sigma(n)}_{c_nk_n}=u^{b_{\sigma(\pi^{-1}(1))}\sigma(\pi^{-1}(1))}_{c_{\pi^{-1}(1)}1}\cdots u^{b_{\sigma(\pi^{-1}(n))}\sigma(\pi^{-1}(n))}_{c_{\pi^{-1}(n)}n}$. Finally, we denote $\rho:=\sigma\circ\pi^{-1}$ and $d_i:=c_{\pi^{-1}(i)}$.

Assume now that $G\subset S_m$ for some $m$. The proof of the last statement -- that $\tilde u$ is faithful -- gets a bit easier if we work in the basis $(x_a)_{a=0}^{m-1}$ of $C(X)$ such that $x_0=1_{C(X)}$ since $x_0$ is an invariant vector of $v$. So denote by $\hat v^a_b$ the entries of $v$ in the basis $(x_a)$ and similarly $\hat u^{ai}_{bj}:=\hat v^{ai}_bw^i_j$. We have then $\hat v^0_0=1$ and $\hat v^0_b=0=\hat v^a_0$ for every $a$, $b$. We need to show that the entries of $\hat{\tilde u}$ already generate the whole algebra $\Olg(G\wr S_n)$. Of course, it is enough to show that it generates the generators $\hat v^{ai}_b$ and $w^i_j$. We claim that $\hat{\tilde u}^{0,\dots,0,a,0,\dots,0}_{0,\dots,0,b,0,\dots,0}=\hat u^{ai}_{bj}$, where the $a$ is on the $i$-th position and the $b$ is on the $j$-th position on the left-hand side. Indeed, we get
\begin{equation}\label{eq.hattilu}
\hat{\tilde u}^{0,\dots,0,a,0,\dots,0}_{0,\dots,0,b,0,\dots,0}=\sum_{\substack{k_1,\dots,k_n=1\\\text{except for $k_j:=i$}}}^n \hat v^{ai}_b\,w^{k_1}_1\cdots w^{k_n}_n=\hat v^{ai}_bw^i_j=\hat u^{ai}_{bj}.\qedhere
\end{equation}
\end{proof}

\begin{prop}
\label{P.wr}
Let $\Gamma$ be a graph. Then $\Aut^+(\Gamma\sqprod\cdots\sqprod\Gamma)\supset(\Aut^+\Gamma)\wr S_n$. More precisely, $G\wr S_n$ acts faithfully on the $n$-fold product $\Gamma\sqprod\cdots\sqprod\Gamma$ by $\delta_{a_1,\dots,a_n}\mapsto\sum_{b_1,\dots,b_n}\delta_{b_1,\dots,b_n}\otimes\tilde u^{b_1,\dots,b_n}_{a_1,\dots,a_n}$.

\end{prop}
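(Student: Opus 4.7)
The plan is to leverage Lemma~\ref{L.wr}, which already establishes that $\tilde u$ is a faithful representation of $G\wr S_n$ with $G:=\Aut^+\Gamma$. Setting $m:=|V(\Gamma)|$, we have $G\subset S_m^+$, so faithfulness comes for free. Hence the remaining work is twofold: check that $\tilde u$ is a \emph{magic unitary} (so that $\delta_{\mathbf{a}}\mapsto\sum_{\mathbf{b}}\delta_{\mathbf{b}}\otimes\tilde u^{\mathbf{b}}_{\mathbf{a}}$ actually extends to a $*$-homomorphic coaction on $C(V(\Gamma)^n)$), and check that this coaction commutes with the Cartesian product adjacency matrix $A=\sum_{i=1}^n I\otimes\cdots\otimes A_\Gamma\otimes\cdots\otimes I$, with $A_\Gamma$ in the $i$-th slot.

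For the magic-unitary property I would work with the expansion
$$\tilde u^{\mathbf{b}}_{\mathbf{a}}=\sum_{\sigma\in S_n}v^{b_{\sigma(1)}\sigma(1)}_{a_1}\cdots v^{b_{\sigma(n)}\sigma(n)}_{a_n}\,\delta_\sigma,$$
where $\delta_\sigma\in C(S_n)$ is the indicator of $\sigma$. The necessary ingredients are all in place: each $v^{ai}_b$ is a self-adjoint projection with $\sum_b v^{ai}_b=\sum_a v^{ai}_b=1$ since the fundamental representation of $G\subset S_m^+$ is a magic unitary; factors $v^{aj}_b$ and $v^{a'j'}_{b'}$ with $j\neq j'$ commute because they live in distinct tensor slots of $\Olg(G)^{\otimes n}$; and the $\delta_\sigma$ are mutually orthogonal self-adjoint projections summing to $1$ that commute with every $v$. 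An index-heavy but entirely routine computation then shows that each $\tilde u^{\mathbf{b}}_{\mathbf{a}}$ is a self-adjoint projection and that its row and column sums equal $1$.

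For the intertwiner relation $A\tilde u=\tilde u A$ the key input is $v^i A_\Gamma=A_\Gamma v^i$, which holds for each $i$ because $v^i$ is a copy of the fundamental representation of $\Aut^+\Gamma$. Plugging the expansion of $\tilde u$ into $[\tilde u A]^{\mathbf{b}}_{\mathbf{a}}=\sum_i\sum_{c}\tilde u^{\mathbf{b}}_{(a_1,\ldots,c,\ldots,a_n)}(A_\Gamma)^{c}_{a_i}$ (with $c$ in the $i$-th lower slot) and applying the transfer identity
$$\sum_{c}v^{b_{\sigma(i)}\sigma(i)}_{c}(A_\Gamma)^{c}_{a_i}=\sum_d(A_\Gamma)^{b_{\sigma(i)}}_d\,v^{d\,\sigma(i)}_{a_i}$$
moves the $A_\Gamma$ from the lower index $a_i$ to the upper index $b_{\sigma(i)}$. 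Reindexing $j:=\sigma(i)$ turns the outer sum over positions $i$ into a sum over positions $j$ acting on $\mathbf{b}$, and the result collapses into $[A\tilde u]^{\mathbf{b}}_{\mathbf{a}}$, as wanted.

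The main obstacle I foresee is purely organizational: the upper indices in $\tilde u^{\mathbf{b}}_{\mathbf{a}}$ are $\sigma$-permuted, so one must keep track carefully of which physical position of $A$ corresponds to which permuted slot during the transfer of $A_\Gamma$. Once the substitution $j=\sigma(i)$ is executed, everything reassembles thanks to the $S_n$-symmetry of $A=A_{\Gamma^{\sqprod n}}$. No genuinely new idea is required beyond Lemma~\ref{L.wr} and the quantum-automorphism relation $v^i A_\Gamma=A_\Gamma v^i$.
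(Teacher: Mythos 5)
Your proposal is correct and follows essentially the same route as the paper: both reduce the statement to Lemma~\ref{L.wr} (which supplies that $\tilde u$ is a faithful representation of $G\wr S_n$) plus the verification that $\tilde u$ defines a coaction on $C(V(\Gamma)^n)$ commuting with the product adjacency matrix, the latter resting on $v^iA_\Gamma=A_\Gamma v^i$ and the reindexing $j=\sigma(i)$. The only difference is presentational: the paper verifies the equivalent intertwiner relations for the singleton and three-block partitions in operator form via $\tilde u=\sum_\sigma T_\sigma(v^{\sigma(1)}\otimes\cdots\otimes v^{\sigma(n)})\delta_\sigma$, whereas you check the magic-unitary relations entrywise.
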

\begin{proof}
Notice that we can express $\tilde u$ in a more ``matricial way''
$$\tilde u=\sum_{\sigma\in S_n}T_\sigma(v^{\sigma(1)}\otimes\cdots\otimes v^{\sigma(n)})\delta_\sigma,$$
where $T_\sigma\colon(\C^m)^{\otimes n}\to(\C^m)^{\otimes n}$ is the linear operator permuting the tensor factors according to $\sigma$ and $\delta_{\sigma}:=w_1^{\sigma(1)}\cdots w_n^{\sigma(n)}$ (it is actually indeed the delta function $S_n\to\C$ mapping $\delta_\sigma(\pi)=\delta_{\sigma\pi}$). We will use this matrix approach throughout the proof, but one could of course also rewrite the computation in terms of the matrix entries.

We want to show that $(\Aut^+\Gamma)\wr S_n$ represented by the faithful representation $\tilde u$ is a quantum subgroup of $\Aut^+(\Gamma\sqprod\cdots\sqprod\Gamma)$. To do this, we need to show that the representation category associated to $\tilde u$ contains the generators of the category $\RCat_{\Aut^+(\Gamma\sqprod\cdots\sqprod\Gamma)}$, which are $T_{\singleton}^{(m^n)}$, $T_{\mergepart}^{(m^n)}$, and $\tilde A$, where $\tilde A$ is the adjacency matrix of $\Gamma\sqprod\cdots\sqprod\Gamma$.

We start with the singleton $T^{(m^n)}_{\singleton}$, which is the easiest one. Notice first that $T_{\singleton}^{(m^n)}=T_{\singleton}^{(m)}\otimes\cdots\otimes T_{\singleton}^{(m)}$. Consequently,
\begin{align*}
\tilde uT_{\singleton}^{(m^n)}
&=\sum_{\sigma\in S_n}T_\sigma(v^{\sigma(1)}\otimes\cdots\otimes v^{\sigma(n)})T_{\singleton}^{(m^n)}\delta_\sigma\\
&=\sum_{\sigma\in S_n}T_\sigma(v^{\sigma(1)}T_{\singleton}^{(m)}\otimes\cdots\otimes v^{\sigma(n)}T_{\singleton}^{(m)})\delta_\sigma\\
&=\sum_{\sigma\in S_n}T_\sigma(T_{\singleton}^{(m)}\otimes\cdots\otimes T_{\singleton}^{(m)})\delta_\sigma
 =T_{\singleton}^{(m^n)},
\end{align*}
where we used the fact that $\sum_{\sigma\in S_n}\delta_\sigma=1_{C(S_n)}$.

To show the second intertwiner relation, denote by $R$ the natural ``disentangling operator'' $(\C^m\otimes\C^m)^{\otimes n}\to (\C^m)^{\otimes n}\otimes(\C^m)^{\otimes n}$ mapping $(x_1\otimes y_1)\otimes\cdots\otimes(x_n\otimes y_n)\mapsto (x_1\otimes\cdots\otimes x_n)\otimes(y_1\otimes\cdots\otimes y_n)$. Then we can write $T_{\mergepart}^{(m^n)}=(T_{\mergepart}^{(m)}\otimes\cdots\otimes T_{\mergepart}^{(m)})R$. So,
\begin{align*}
\tilde uT_{\mergepart}^{(m^n)}
&=\sum_{\sigma\in S_n}T_\sigma(v^{\sigma(1)}\otimes\cdots\otimes v^{\sigma(n)})T_{\mergepart}^{(m^n)}\delta_\sigma\\
&=\sum_{\sigma\in S_n}T_\sigma(v^{\sigma(1)}T_{\mergepart}^{(m)}\otimes\cdots\otimes v^{\sigma(n)}T_{\mergepart}^{(m)})R\delta_\sigma\\
&=\sum_{\sigma\in S_n}T_\sigma(T_{\mergepart}^{(m)}(v^{\sigma(1)}\otimes v^{\sigma(1)})\otimes\cdots\otimes T_{\mergepart}^{(m)}(v^{\sigma(n)}\otimes v^{\sigma(n)}))R\delta_\sigma\\
&=\sum_{\sigma\in S_n}T_\sigma T_{\mergepart}^{(m^n)}((v^{\sigma(1)}\otimes\cdots\otimes v^{\sigma(n)})\otimes(v^{\sigma(1)}\otimes\cdots\otimes v^{\sigma(n)}))\delta_\sigma\\
&=\sum_{\sigma\in S_n}T_{\mergepart}^{(m^n)}(T_\sigma\otimes T_\sigma)((v^{\sigma(1)}\otimes\cdots\otimes v^{\sigma(n)})\otimes(v^{\sigma(1)}\otimes\cdots\otimes v^{\sigma(n)}))\delta_\sigma\\
&=T_{\mergepart}^{(m^n)}(\tilde u\otimes\tilde u).
\end{align*}

Finally, we prove that $\tilde u$ commutes with $\tilde A:=\sum_{i=1}^n \id\otimes\cdots\otimes\id\otimes A\otimes\id\otimes\cdots\otimes\id$, where $A$ is the adjacency matrix of $\Gamma$ and in each summand it appears at the $i$-th factor of the tensor product.
\begin{align*}
\tilde u\tilde A
&=\sum_{\sigma\in S_n}\sum_{i=1}^nT_\sigma(v^{\sigma(1)}\otimes\cdots\otimes v^{\sigma(i)}A\otimes\cdots\otimes v^{\sigma(n)})\\
&=\sum_{\sigma\in S_n}\sum_{i=1}^nT_\sigma(v^{\sigma(1)}\otimes\cdots\otimes Av^{\sigma(i)}\otimes\cdots\otimes v^{\sigma(n)})
 =\tilde A\tilde u
\end{align*}
\end{proof}


\begin{rem}
The inclusion in Proposition~\ref{P.wr} may and may not be strict. Hamming graphs $H(n,m)$ provide examples for both. Taking $m=2$ and $n\ge2$, we have $\Aut^+(K_2\sqprod\cdots\sqprod K_2)=O_n^{-1}\supsetneq S_2\wr S_n=(\Aut^+K_2)\wr S_n$. By \cite{Sch20dt}, we have equality for $m=3$: $\Aut^+(K_3\sqprod\cdots\sqprod K_3)=S_3\wr S_n=(\Aut^+ K_3)\wr S_n$. We are going to prove the equality for any $m>2$ in the case of Hamming graphs.
\end{rem}

\begin{rem}\label{R.wr}
Let $(x_a)_{a=0}^{m-1}$ be a basis of $C(X)$ such that $x_0=1_{C(X)}$ and denote $x_{ai}=1_{C(X)}\otimes\cdots\otimes 1_{C(X)}\otimes x_a\otimes 1_{C(X)}\otimes\cdots\otimes 1_{C(X)}$ as we already did once. Equation~\eqref{eq.hattilu} shows that the action from Proposition~\ref{P.wr} indeed maps $x_{ai}\mapsto\sum_{b,j}x_{bj}\hat u^{bj}_{ai}$.
\end{rem}

\subsection{Proof of Theorem~\ref{T.Hamming}}
Recall that $H(n,m)$ is the Cayley graph of $\Z_m^n$ with respect to the generating set $\{a\epsilon_i\mid i=1,\dots,n,\;a=1,\dots,m-1\}$. We denote by $\tau_\mu$, $\mu\in\Z_m^n$ the irreducible characters of $\Z_m^n$ defined by $\tau_\mu(\alpha)=\gamma^{\alpha_1\mu_1+\cdots+\alpha_n\mu_n}$, where $\gamma$ is some primitive $m$-th root of unity.

As usual, we start with determining the spectrum using Proposition~\ref{P.lambda}:
$$\lambda_\mu=\sum_{i=1}^n\sum_{a=1}^{m-1}\gamma^{\mu_ia}=ml_\mu-n,$$
where $l_\mu=\#\{i\mid \mu_i=0\}$. So, the spectrum contains $n+1$ distinct eigenvalues $n(m-1)$, $n(m-2),\dots,$ $-n$. Denote by $V_0,\dots,V_{n+1}$ the corresponding eigenspaces $V_i=\spanlin\{\tau_\mu\mid l_\mu=n-i\}$. So, for instance $V_0=\spanlin\{\tau_{0,\dots,0}\}$, $V_1=\spanlin\{\tau_{ai}\mid i=1,\dots,n,\;a=1,\dots,m-1\}$, where we denote $\tau_{ai}:=\tau_i^a=\tau_\mu$ for $\mu=(0,\dots,0,a,0,\dots,0)$ -- the $a$ being on the $i$-th place. Those must be invariant subspaces of the fundamental representation of $\Aut^+H(n,m)$.

In Proposition~\ref{P.wr}, we showed that $S_m^+\wr S_n$ acts on $H(n,m)=K_m\sqprod\cdots\sqprod K_m$ via $\tau_{ai}\mapsto\sum_{b,j}\tau_{bj}\otimes\hat u^{bj}_{ai}$ (see Remark~\ref{R.wr}), so $\Aut^+H(n,m)\supset S_m^+\wr S_n$. It remains to show the opposite inclusion.

Denote by $u$ the fundamental representation of $\Aut^+ H(n,m)$. Denote by $\hat u:=\F^{-1}u\F$ the Fourier transform of $u$, that is, the matrix $u$ expressed in the basis of $\tau_\mu$. This matrix decomposes into a direct sum with respect to the invariant subspaces $V_0,V_1,\dots,V_n$ as $\hat u=\hat u^{(0)}\oplus\hat u^{(1)}\oplus\cdots\oplus\hat u^{(n)}$. We denote by $\hat u^{ai}_{bj}$ the entries of $\hat u^{(1)}$. It is enough to show that this matrix $\hat u^{(1)}$ satisfies the relations of the fundamental representation of $\hat S_m^+\wr S_n$. So let us study its intertwiners.

Recall the formula~\eqref{eq.That} for computing the Fourier transform of intertwiners corresponding to block partitions $[\hat T_{b_{k,l}}]^{\nu_1,\dots,\nu_l}_{\mu_1,\dots,\mu_k}=N^{1-k}\delta_{\mu_1+\cdots+\mu_k,\nu_1+\cdots+\nu_l}$. We start by taking $p=\mergepart$ and focus on the entries of $\hat T_{\mergepart}^{(N)}$ corresponding to the invariant subspace $V_1$ and see that $[\hat T_{\mergepart}^{(N)}]^{bj}_{a_1i_1,a_2i_2}=\delta_{i_1i_2j}\delta_{a_1+a_2,b}$. Let us denote $R_{\mergepart}:=\hat T^{(N)}_{\mergeone}\in\Mor(\hat u^{(1)}\otimes\hat u^{(1)},\hat u^{(1)})$ the restriction/projection of $\hat T_{\mergepart}^{(N)}$ onto $V_1$. Let us also denote $R_{\connecterpart}:=R^*_{\mergepart}R_{\mergepart}\in\Mor(\hat u^{(1)\,\otimes 2},\hat u^{(1)\,\otimes 2})$, so that $[R_{\connecterpart}]^{b_1j_1,b_2j_2}_{a_1i_1,a_2i_2}=\delta_{i_1i_2j_1j_2}\delta_{a_1+a_2,b_1+b_2}$.

Next, let us study the intertwiner $\hat T^{(N)}_{\connecterpart}$. Its projection onto $V_1$ can be expressed as
$$N\hat T^{(N)}_{\connecterone}=R_{\connecterpart}+R_{\PAAbb}+R_{\PaBaB}+R_{\PaBBa}.$$
Here, we use the following notation
\begin{align*}
[R_{\PAAbb}]_{a_1i_1,a_2i_2}^{b_1j_1,b_2j_2}&=\delta_{a_1+a_2,0}\delta_{b_1+b_2,0}\delta_{i_1=i_2\neq j_1=j_2},\\
[R_{\PaBaB}]_{a_1i_1,a_2i_2}^{b_1j_1,b_2j_2}&=\delta_{a_1,b_2}\delta_{a_2b_1}\delta_{i_1=j_2\neq i_2=j_1},\\
[R_{\PaBBa}]_{a_1i_1,a_2i_2}^{b_1j_1,b_2j_2}&=\delta_{a_1b_1}\delta_{a_2b_2}\delta_{i_1=j_1\neq i_2=j_2},
\end{align*}
where we use slightly more general notation for the deltas, which is hopefully self-descriptive: For instance, $\delta_{i_1=j_1\neq i_2=j_2}$ equals to one if $i_1=j_1\neq i_2=j_2$ and otherwise it equals to zero. The idea behind the diagrams is that the dashed and dotted blocks indicate the fact that the $i,j$ indices corresponding to the different blocks must not coincide.

We already know that the map $R_{\connecterpart}$ is an intertwiner, which implies that also the sum $R_{\PAAbb}+R_{\PaBaB}+R_{\PaBBa}$ must be an intertwiner. We are going to show that actually each term of this sum is an intertwiner:

First, compute the square of the sum. Obviously, $R_{\PAAbb}R_{\PaBaB}=0=R_{\PAAbb}R_{\PaBBa}$, so it is actually quite easy: $(R_{\PAAbb}+R_{\PaBaB}+R_{\PaBBa})^2=2(m-1)R_{\PAABB}+2R_{\PaBBa}+2R_{\PaBaB}$. Here, $[R_{\PAABB}]_{b_1j_1,b_2j_2}^{a_1i_1,a_2,i_2}=\delta_{a_1+a_2,m}\delta_{b_1+b_2,m}\delta_{a_1a_2b_1b_2}$ (both blocks are dashed so all of the $a_1,a_2,b_1,b_2$ do have to coincide). That was not very helpful actually, but in a similar manner, we can compute the third power: $(R_{\PAAbb}+R_{\PaBaB}+R_{\PaBBa})^3=4(m-1)^2R_{\PAAbb}+4R_{\PaBBa}+4R_{\PaBaB}$. Subtracting four times the original sum, we get $4m(m-2)R_{\PAAbb}$, so $R_{\PAAbb}$ is an intertwiner unless $m=2$. But now $R_{\PaBBa}$ is just a rotation of $R_{\PAAbb}$, so it must also be an intertwiner. Consequently, also $R_{\PaBaB}$ must be an intertwiner and also $R_{\PAABB}$ must be an intertwiner.

Those are all intertwiners we need. Now, we just look at the relations they imply. First, the intertwiner $R_{\PAABB}$ implies the following relation:
\begin{equation}
\sum_{c=1}^{m-1}\hat u^{a_1i_1}_{cj_1}\hat u^{a_2i_2}_{m-c,j_2}\delta_{j_1j_2}\delta_{b_1+b_2,m}=\sum_{d=1}^{m-1}\hat u^{di_1}_{b_1j_1}\hat u^{m-d,i_2}_{b_2j_2}\delta_{i_1i_2}\delta_{a_1+a_2,m}
\end{equation}
So, we can define $w^i_j:=\sum_c\hat u^{ai}_{cj}\hat u^{ai}_{m-c,j}=\sum_d \hat u^{di}_{bj}u^{m-d,i}_{bj}$ (thanks to the relation above, all the sums are equal regardless of the choice of $a,b$). Let us also define $\hat v^{ai}_b:=\sum_ju^{ai}_{bj}$ (compare with Remark~\ref{R.fwr}(f)).

Now it remains to derive the following relations:

{
\def\eq#1=#2\\{\hbox{%
\refstepcounter{equation}%
\rlap{(\theequation)}%
\kern7em\llap{$#1$}%
${}=#2$
}}
\def\expl#1{\openup-\jot
\hskip \displaywidth minus 1fill
\vcenter{\advance\hsize by -15em\par\noindent
#1
}}
\def\eqset#1{%
\left.\kern-\nulldelimiterspace\vcenter{\hsize=14em #1}\quad\right\}
}

$$\displaylines{
\eqset{
\eq \sum_jw^i_j=1                               \label{eq.Hw1}\\ 
\eq w^i_jw^k_l\delta_{jl}=w^i_jw^k_l\delta_{ik} \label{eq.Hw2}\\
\eq w^i_jw^k_l=w^k_lw^i_j                       \label{eq.Hw3}\\
}
\expl{$w$ satisfies the relations of $S_n$ (we use the intertwiner relations of $T_{\singleton}^{(m)}$, $T_{\connecterpart}^{(m)}$, and $T_{\crosspart}^{(m)}$),}\cr
\eq v^{ak}_bw^i_j=w^i_jv^{ak}_b                 \label{eq.Hvw}\\
\expl{$w^i_j$ commute with everything,}\cr
\eq v^{a_1i}_bv^{a_2i}_b=v^{a_1i}_b\delta_{a_1+a_2,n}\label{eq.Hv1}\\
\expl{$v^i$ satisfy the relations of $\hat S_m$ (we use the intertwiner relation of $T_{\mergepart}^{(n)}$),}\cr
\eq v^{ai}_bv^{cj}_d=v^{cj}_dv^{ai}_b           \label{eq.Hv2}\\
\expl{entries of $v^i$ commute with entries of $v^j$ for $i\neq j$,}\cr
\eq u^{ai}_{bj}=v^{ai}_bw^i_j                   \label{eq.Huvw}\\
\expl{$u$ indeed has the correct structure.}
}
$$
}

The twisted orthogonality relation corresponding to the intertwiner $\hat T^{(N)}_{\pairpart}$ looks as follows:
$$\sum_{b,j}u^{a_1i_1}_{bj}u^{a_2i_2}_{m-b,j}=\delta_{a_1+a_2,m}\delta_{i_1i_2}$$
This, in particular, implies Relation~\eqref{eq.Hw1}.

We write down the relation corresponding to $R_{\PaBaB}$:
$$u^{a_1i_1}_{b_2j_2}u^{a_2i_2}_{b_1j_1}\delta_{j_1\neq j_2}=u^{a_2i_2}_{b_1j_1}u^{a_1i_1}_{b_2j_2}\delta_{i_1\neq i_2}$$
This implies two things. First,
\begin{equation}\label{eq.Hu1}
u^{ai}_{bj}u^{ck}_{dl}=u^{ck}_{dl}u^{ai}_{bj}\qquad\text{whenever $i\neq\nobreak k$ or $j\neq l$.}
\end{equation}
Secondly, (and for this we might as well use the relation corresponding to $R_{\PaBBa}$),
\begin{equation}\label{eq.Hu2}
u^{ai}_{bj}u^{ck}_{dj}=0\quad\text{for $i\neq k$,}\qquad u^{ai}_{bj}u^{ci}_{dl}=0\quad\text{for $j\neq l$.}
\end{equation}

The latter remark allows us to check Relation~\eqref{eq.Hw2}. Assume $j\neq l$, then
$$w^i_jw^i_l=\sum_{c,d}\hat u^{ai}_{cj}\hat u^{ai}_{m-c,j}\hat u^{ai}_{dl}\hat u^{ai}_{m-d,l}=0.$$
Similarly, we derive that $w^i_jw^k_j=0$ for $i\neq k$.

Relation~\eqref{eq.Hu1} then implies Relation~\eqref{eq.Hw3}, i.e.\ the commutativity $w^i_jw^k_l=w^k_lw^i_j$. Indeed, notice that $w^i_j$ obviously commutes with itself, so we can assume that $i\neq k$ or $j\neq l$ and then it is a direct application of \eqref{eq.Hu1}.

We can also use Relation~\eqref{eq.Hu2} to check \eqref{eq.Huvw}:
$$v^{ai}_bw^i_j=\sum_ku^{ai}_{bk}\sum_cu^{ai}_{cj}u^{ai}_{m-c,j}=u^{ai}_{bj}\sum_{c,l}u^{ai}_{cl}u^{ai}_{m-c,l}=u^{ai}_{bj}.$$

Similarly, we can derive $u^{ai}_{bj}=w^i_j v^{ai}_b$, which proves part of Relation~\eqref{eq.Hvw}. To finish the proof of this relation, assume that $j\neq k$ and compute
$$v^{ak}_bw^i_j=\sum_lu^{ak}_{bl}\sum_cu^{ai}_{cj}u^{ai}_{m-c,j}=\sum_lu^{ak}_{bl}\sum_cu^{ai}_{cj}u^{ai}_{m-c,j}=w^i_jv^{ak}_b.$$

Relation~\eqref{eq.Hv2} goes the same way.

Finally, Relation~\eqref{eq.Hv1} follows directly from the relation corresponding to $R_{\mergepart}$, which reads
$$u^{a_1i_1}_{bj}u^{a_2i_2}_{bj}=u^{a_1+a_2,i_1}_{bj}\delta_{b_1b_2}.\eqno\qed$$

\bibliographystyle{halpha}
\bibliography{mybase}

\begin{thebibliography}{BBC07}
\expandafter\ifx\csname url\endcsname\relax
  \def\url#1{\texttt{#1}}\fi
\expandafter\ifx\csname doi\endcsname\relax
  \def\doi#1{\burlalt{doi:#1}{http://dx.doi.org/#1}}\fi
\expandafter\ifx\csname urlprefix\endcsname\relax\def\urlprefix{URL }\fi
\expandafter\ifx\csname href\endcsname\relax
  \def\href#1#2{#2}\fi
\expandafter\ifx\csname burlalt\endcsname\relax
  \def\burlalt#1#2{\href{#2}{#1}}\fi

\bibitem[Bab79]{Bab79}
László Babai.
\newblock Spectra of {C}ayley graphs.
\newblock {\em Journal of Combinatorial Theory, Series B}, 27(2):180--189,
  1979.
\newblock \doi{10.1016/0095-8956(79)90079-0}.

\bibitem[Ban96]{Ban96}
Teodor Banica.
\newblock Théorie des représentations du groupe quantique compact libre
  {$O(n)$}.
\newblock {\em Comptes rendus de l'Académie des sciences. Série 1,
  Mathématique}, 322:241--244, 1996.

\bibitem[Ban99]{Ban99}
Teodor Banica.
\newblock Symmetries of a generic coaction.
\newblock {\em Mathematische Annalen}, 314:763--780, 1999.

\bibitem[Ban02]{Ban02}
Teodor Banica.
\newblock Quantum groups and {Fuss--Catalan} algebras.
\newblock {\em Communications in Mathematical Physics}, 226:221--232, 2002.
\newblock \doi{10.1007/s002200200613}.

\bibitem[Ban05]{Ban05}
Teodor Banica.
\newblock Quantum automorphism groups of homogeneous graphs.
\newblock {\em Journal of Functional Analysis}, 224(2):243--280, 2005.
\newblock \doi{10.1016/j.jfa.2004.11.002}.

\bibitem[BB07]{BB07}
Teodor Banica and Julien Bichon.
\newblock Quantum automorphism groups of vertex-transitive graphs of order $\le
  11$.
\newblock {\em Journal of Algebraic Combinatorics}, 26:83--105, 2007.

\bibitem[BBC07]{BBC07}
Teodor Banica, Julien Bichon, and {Beno\^\i t} Collins.
\newblock The hyperoctahedral quantum group.
\newblock {\em Journal of the Ramanujan Mathematical Society}, 22:345--384,
  2007.

\bibitem[Bic03]{Bic03}
Julien Bichon.
\newblock Quantum automorphism groups of finite graphs.
\newblock {\em Proceedings of the Americal Mathematical Society},
  131(3):665--673, 2003.
\newblock \doi{10.1090/S0002-9939-02-06798-9}.

\bibitem[Bic04]{Bic04}
Julien Bichon.
\newblock Free wreath product by the quantum permutation group.
\newblock {\em Algebras and Representation Theory}, 7:343--362, 2004.
\newblock \doi{10.1023/B:ALGE.0000042148.97035.ca}.

\bibitem[BM21]{BMC21}
Teodor Banica and J.P. McCarthy.
\newblock The {F}rucht property in the quantum group setting.
\newblock {\em Glasgow Mathematical Journal}, advance publication, 2021.
\newblock \doi{10.1017/S0017089521000380}.

\bibitem[Bra37]{Bra37}
Richard Brauer.
\newblock On algebras which are connected with the semisimple continuous
  groups.
\newblock {\em Annals of Mathematics}, 38(4):857--872, 1937.
\newblock \doi{10.2307/1968843}.

\bibitem[BS09]{BS09}
Teodor Banica and Roland Speicher.
\newblock Liberation of orthogonal {Lie} groups.
\newblock {\em Advances in Mathematics}, 222(4):1461--1501, 2009.
\newblock \doi{10.1016/j.aim.2009.06.009}.

\bibitem[Cha19]{Cha19}
Arthur Chassaniol.
\newblock Study of quantum symmetries for vertex-transitive graphs using
  intertwiner spaces.
\newblock 2019, arXiv:\burlalt{1904.00455}{http://arxiv.org/abs/1904.00455}.

\bibitem[Gro20]{GroThesis}
Daniel Gromada.
\newblock {\em Compact matrix quantum groups and their representation
  categories}.
\newblock PhD thesis, Saarland University, 2020.
\newblock \doi{10.22028/D291-32389}.

\bibitem[Gro22]{GD4}
Daniel Gromada.
\newblock Free quantum analogue of {C}oxeter group {$D_4$}.
\newblock {\em Journal of Algebra}, 604:577--613, 2022.
\newblock \doi{10.1016/j.jalgebra.2022.03.036}.

\bibitem[GW20]{GWintsp}
Daniel Gromada and Moritz Weber.
\newblock Intertwiner spaces of quantum group subrepresentations.
\newblock {\em Communications in Mathematical Physics}, 376:81--115, 2020.
\newblock \doi{10.1007/s00220-019-03463-y}.

\bibitem[GW22]{GWgen}
Daniel Gromada and Moritz Weber.
\newblock {Generating linear categories of partitions}.
\newblock {\em Kyoto Journal of Mathematics}, 62(4):865--909, 2022.
\newblock \doi{10.1215/21562261-2022-0028}.

\bibitem[Jon94]{Jon94}
Vaughan~F.R. Jones.
\newblock The {Potts} model and the symmetric group.
\newblock In {\em Subfactors: Proceedings of the Taniguchi Symposium on
  Operator Algebras (Kyuzeso, 1993)}, pages 259--267, 1994.

\bibitem[KS97]{KS97}
Anatoli Klimyk and Konrad Schmüdgen.
\newblock {\em Quantum Groups and Their Representations}.
\newblock Springer-Verlag, Berlin, 1997.

\bibitem[Lov75]{Lov75}
László Lovász.
\newblock Spectra of graphs with transitive groups.
\newblock {\em Periodica Mathematica Hungarica}, 6:191--195, 1975.
\newblock \doi{10.1007/BF02018821}.

\bibitem[LZ19]{LZ19}
Xiaogang Liu and Sanming Zhou.
\newblock Eigenvalues of {C}ayley graphs.
\newblock 2019, arXiv:\burlalt{1809.09829}{http://arxiv.org/abs/1809.09829}.

\bibitem[Mal18]{Mal18}
Sara Malacarne.
\newblock Woronowicz {Tannaka--Krein} duality and free orthogonal quantum
  groups.
\newblock {\em Mathematica Scandinavica}, 122(1):151--160, 2018.
\newblock \doi{10.7146/math.scand.a-97320}.

\bibitem[Mir16]{Mir16}
Seyed~Morteza Mirafzal.
\newblock Some other algebraic properties of folded hypercubes.
\newblock {\em Ars Combinatoria}, 124:154--159, 2016.

\bibitem[NT13]{NT13}
Sergey Neshveyev and Lars Tuset.
\newblock {\em Compact Quantum Groups and Their Representation Categories}.
\newblock Société Mathématique de France, Paris, 2013.

\bibitem[Sch20a]{Sch20dt}
Simon Schmidt.
\newblock On the quantum symmetry of distance-transitive graphs.
\newblock {\em Advances in Mathematics}, 368:107150, 2020.
\newblock \doi{10.1016/j.aim.2020.107150}.

\bibitem[Sch20b]{SchThesis}
Simon Schmidt.
\newblock {\em Quantum automorphism groups of finite graphs}.
\newblock PhD thesis, Saarland University, 2020.
\newblock \doi{10.22028/D291-31806}.

\bibitem[Sch20c]{Sch20fq}
Simon Schmidt.
\newblock Quantum automorphisms of folded cube graphs.
\newblock {\em Annales de l'Institut Fourier}, 70(3):949--970, 2020.
\newblock \doi{10.5802/aif.3328}.

\bibitem[Tim08]{Tim08}
Thomas Timmermann.
\newblock {\em An Invitation to Quantum Groups and Duality}.
\newblock European Mathematical Society, Zürich, 2008.

\bibitem[Wan95]{Wan95free}
Shuzhou Wang.
\newblock Free products of compact quantum groups.
\newblock {\em Communications in Mathematical Physics}, 167(3):671--692, 1995.
\newblock \doi{10.1007/BF02101540}.

\bibitem[Wan98]{Wan98}
Shuzhou Wang.
\newblock Quantum symmetry groups of finite spaces.
\newblock {\em Communications in Mathematical Physics}, 195(1):195--211, 1998.
\newblock \doi{10.1007/s002200050385}.

\bibitem[Web17]{Web17}
Moritz Weber.
\newblock Introduction to compact (matrix) quantum groups and
  {Banica--Speicher} (easy) quantum groups.
\newblock {\em Proceedings -- Mathematical Sciences}, 127(5):881--933, 2017.
\newblock \doi{10.1007/s12044-017-0362-3}.

\bibitem[Wor87]{Wor87}
Stanisław~L. Woronowicz.
\newblock Compact matrix pseudogroups.
\newblock {\em Communications in Mathematical Physics}, 111(4):613--665, 1987.
\newblock \doi{10.1007/BF01219077}.

\bibitem[Wor88]{Wor88}
Stanisław~L. Woronowicz.
\newblock {Tannaka--Krein} duality for compact matrix pseudogroups. {Twisted
  $SU(N)$} groups.
\newblock {\em Inventiones mathematicae}, 93(1):35--76, 1988.
\newblock \doi{10.1007/BF01393687}.

\end{thebibliography}

\end{document}